\definecolor{orange}{HTML}{FF7F00}  
\definecolor{pink}{HTML}{FF007F}  
\definecolor{violet}{HTML}{7F00FF}  
\definecolor{azure}{HTML}{007FFF}  
\definecolor{springgreen}{HTML}{00FF7F}  
\definecolor{chartreuse}{HTML}{7FFF00}  
\def\myomega{\lambda}
\theoremstyle{definition}
\newtheorem{theorem}{Theorem}[section]
\newtheorem{cor}[theorem]{Corollary}
\newtheorem{lem}[theorem]{Lemma}
\newtheorem{prop}[theorem]{Proposition}
\newtheorem{defi}[theorem]{Definition}
\newtheorem{ex}[theorem]{Example}
\newtheorem{rem}[theorem]{Remark}
\newtheorem*{rep@theorem}{\rep@title}
\newcommand{\newreptheorem}[2]{%
\newenvironment{rep#1}[1]{%
 \def\rep@title{#2 \ref{##1}}%
 \begin{rep@theorem}}%
 {\end{rep@theorem}}}
\newcommand*{\medcap}{\mathbin{\scalebox{1.5}{\ensuremath{\cap}}}}%
\renewcommand{\AA}{{\cal A}}
\newcommand{\Z}{{\cal Z}}
\newcommand{\R}{{\mathscr{R}}}
\newcommand{\e}{\overline{e}}
\newcommand{\x}{\mathbf{x}}
\newcommand{\1}{\mathbf{1}}
\newcommand{\HH}{{\cal H}}
\newcommand{\ZZ}{\mathbb{Z}}
\newcommand{\RR}{{\mathbb R}}
\newcommand{\df}{\textrm{Def}}
\newcommand{\nf}{\textrm{Nef}}
\newcommand{\SF}{\textrm{SF}}
\newcommand{\cone}{\textrm{cone}}
\newcommand{\conv}{\textrm{conv}}
\renewcommand{\span}{\textrm{span}}
\newcommand{\PL}{\textrm{PL}}
\begin{document}
\title{\textsf{Coxeter submodular functions and \\
deformations of Coxeter permutohedra}}
\author{
\textsf{Federico Ardila\footnote{\noindent \textsf{San Francisco State University, Simons Institute for the Theory of Computing; U. de Los Andes; federico@sfsu.edu.}}}\\
\and \textsf{Federico Castillo\footnote{\noindent \textsf{University of Kansas, fcastillo@ku.edu}}} \\
\and \textsf{Christopher Eur\footnote{\noindent \textsf{University of California, Berkeley, ceur@math.berkeley.edu}}} \\
\and \textsf{Alexander Postnikov\footnote{\noindent \textsf{Massachusetts Institute of Technology, apost@math.mit.edu
\newline 
Partially supported by National Science Foundation DMS Awards 1440140 (MSRI), 1600609 + 1855610 (FA), 1764370 (AP), and the Simons Foundation (FA).}}}}

\date{}

\maketitle


\begin{abstract}
We describe the cone of deformations of a Coxeter permutohedron, or equivalently, the nef cone of the toric variety associated to a Coxeter complex. This class of polytopes contains important families such as weight polytopes, signed graphic zonotopes, Coxeter matroids, root cones, and Coxeter associahedra.
Our description extends the known correspondence between generalized permutohedra, polymatroids, and submodular functions to any finite reflection group.
\end{abstract}

\section{\textsf{Introduction}}

The permutohedron $\Pi_n$ is the convex hull of the $n!$ permutations of $(1, \ldots, n)$ in $\RR^n$. This  polytopal model for the symmetric group $S_n$ appears in numerous combinatorial, algebraic, and geometric settings. 
There are two natural generalizations: 

1.\ Reflection groups: Instead of $S_n$, we may consider any finite real reflection group $W$ with corresponding (not necessarily crystallographic) root system $\Phi \subset V$. This group is similarly modeled by the $\Phi$-permutohedron, which is the convex hull of the $W$-orbit of a generic point in $V$. Most geometric and representation theoretic properties of the permutohedron extend to this setting. 

2.\ Deformations: We may deform the polytope by moving its faces while preserving their directions. The resulting family of \emph{generalized permutohedra} or \emph{polymatroids} is special enough to feature a rich combinatorial, algebraic, and geometric structure, and flexible enough to contain polytopes of interest in numerous different contexts.

\smallskip

The goal of this paper is to describe the \emph{deformations of $\Phi$-permutohedra} or \emph{$\Phi$-polymatroids}, thus generalizing these two directions simultaneously. We have two motivations:

\smallskip

%
$\bullet$ \emph{Coxeter combinatorics} recognizes that many classical combinatorial constructions are intimately related to the symmetric group, and have natural generalizations to the setting of reflection groups. There are natural Coxeter analogs of compositions, graphs, matroids, posets, and clusters, all of which have polyhedra modeling them: weight polytopes \cite{FultonHarris}, signed graphic zonotopes \cite{Zaslavskysigned}, Coxeter matroids \cite{Coxetermatroids, GelfandSerganova}, root cones \cite{Reiner, Stembridge}
, and Coxeter associahedra \cite{HohlwegLangeThomas}. We observe that these are all deformations of Coxeter permutohedra.

\smallskip

$\bullet$ The \emph{Coxeter permutohedral variety} $X_\Phi$ is the toric variety associated to a crystallographic Coxeter arrangement ${\mathcal A}_\Phi$. The various embeddings of $X_\Phi$ into projective spaces give rise to the \emph{nef cone}, a key object in the toric minimal model program. The nef cone of $X_\Phi$ can be identified with the cone of possible deformations of the $\Phi$-permutohedron.

\medskip
Let us now summarize our main results on deformations of Coxeter permutahedra. The necessary definitions and detailed statements are presented in the upcoming sections.

%
%

\medskip
A central result about generalized permutohedra in $\RR^n$  is that they are in bijection with the functions $f: 2^{[n]} \rightarrow \RR$ that satisfy the \emph{submodular inequalities} $f(A) + f(B) \geq f(A \cup B) + f(A\cap B)$. Thus the field of submodular optimization is essentially a study of this family of polytopes.

Our main theorem extends this to all finite reflection groups. Let $\Phi$ be a finite root system with Weyl group\footnote{Some authors only associate the terminology ``Weyl group" to \emph{crystallographic} root systems.  In this paper, we will not assume root systems to be crystallographic unless explicitly stated otherwise.} $W$ and let $\R = W\{\myomega_1, \ldots, \myomega_d\}$ be the union of the $W$-orbits of the fundamental weights $\myomega_1, \ldots, \myomega_d$. Let $A$ be the Cartan matrix.

\newtheorem*{thm:main}{Theorem \ref{thm:main}}
\begin{thm:main}
The deformations of the $\Phi$-permutohedron are in bijection with the functions $h: \R \rightarrow \RR$ that satisfy the \emph{$\Phi$-submodular inequalities}:

\begin{equation}\label{ineq:localsubmodintro}
h(w  \myomega_i) +  h(ws_i  \myomega_i) \geq 
  \sum_{j \in N(i)}- 
  A_{ji} \,
  h(w \myomega_j) 
\end{equation}
for every element $w \in W$, every simple reflection $s_i$, and corresponding fundamental weight $\myomega_i$. Here $N(i)$ denotes the set of neighbors of $i$ in the Dynkin diagram, not including $i$ itself.
 \end{thm:main}

These inequalities are very sparse: The right hand side of the $\Phi$-submodular inequality has $1, 2,$ or $3$ non-zero terms, depending on the number of neighbors of $i$ in the Dynkin diagram of $\Phi$. For $\Phi = A_{n-1}$ we get precisely the classic family of submodular functions.  For $\Phi = B_n$ and $\Phi = C_n$ we get precisely Fujishige's notion of bisubmodular functions. More generally, we expect $\Phi$-submodular functions to be useful in  combinatorial optimization problems with an underlying symmetry of type $\Phi$.

We prove that the inequalities \eqref{ineq:localsubmodintro} are precisely the facets of the cone $\SF_\Phi$ of $\Phi$-submodular functions. This allows us to enumerate them.  Again, let $N(i)$ be the set of neighbors of $i$ in the Dynkin diagram, and for each subset $I\subseteq [d]$ let $W_I$ denote the parabolic subgroup of $W$ generated by $\{s_i\}_{i\in I}$.

\newtheorem*{thm:main2}{Theorems \ref{thm:facet} and \ref{thm:count}}
\begin{thm:main2}
Each inequality \eqref{ineq:localsubmodintro} associated with a pair $(w,s_i)$, for $w\in W$ and $i\in[d]$, gives a facet of the $\Phi$-submodular cone $\SF_\Phi$.  Two pairs $(w,s_i)$ and $(w',s_{i'})$ define the same facet if and only if $i=i'$ and $w^{-1}w'\in W_{[d]- N(i)}$.  In particular, the number of facets of $\SF_\Phi$ equals
$$
\sum_{i=1}^d {|W|\over |W_{[d] - N(i)}|}.
$$
\end{thm:main2}

On the other hand, the rays of the $\Phi$-submodular cone seem to be very difficult to describe, even when $\Phi = A_{n-1}$.

\medskip
We completely describe an important slice of $\SF_\Phi$: the \emph{symmetric $\Phi$ submodular cone} consisting of the $\Phi$-submodular functions that are invariant under the natural action of the Weyl group $W$. These functions correspond to the $W$-symmetric $\Phi$-generalized permutahedra; 
when $\Phi$ is crystallographic, this family includes the \emph{weight polytopes} that  parameterize the irreducible representations of the associated Lie algebra.  By identifying a symmetric $\Phi$-submodular function $h$ with the vector $(h(\myomega_1), \ldots, h(\myomega_d))\in \RR^d$ of its values on the fundamental weights, we may think of the symmetric $\Phi$-submodular cone $\SF_{\Phi}^{\operatorname{sym}}$ as living in $\RR^d$.  We obtain directly from Theorem \ref{thm:main} the following description of $\SF_{\Phi}^{\operatorname{sym}}$.

\newtheorem*{prop:sym}{Proposition \ref{prop:sym}}
\begin{prop:sym}
The \emph{symmetric $\Phi$-submodular cone} $\SF_\Phi^{\textrm{sym}}$ is the simplicial cone generated by the rows of the inverse Cartan matrix of $\Phi$.
\end{prop:sym}

We conclude by characterizing which weight polytopes are indeformable; or equivalently, which rays of the symmetric submodular cone $\SF_\Phi^{\textrm{sym}}$ are also rays of the submodular cone $\SF_\Phi$. 

\newtheorem*{thm:orbit}{Theorem \ref{thm:orbit}}
\begin{thm:orbit}
Let $\Phi$ be a crystallographic root system. 
The rays of the $\Phi$-submodular cone $\SF_\Phi$ that are symmetric correspond to the weight polytopes of the fundamental weights $\myomega_i$ where $i$ is a vertex of the Dynkin diagram whose incident edges are unlabelled.
\end{thm:orbit}

The paper is organized as follows. Sections \ref{sec:polytopes} reviews some preliminaries on polytopes and their deformations. Section \ref{sec:coxeter} reviews some basic facts about root systems, reflection groups, and Coxeter complexes. Section \ref{sec:examples} introduces Coxeter permutohedra and some of their important deformations.
In Section \ref{sec:Phisubmodularcone} we describe the $\Phi$-submodular cone $\SF_\Phi$, which parameterizes the deformations of the $\Phi$-permutohedron.
Section \ref{sec:symmetric} studies weight polytopes: the deformations of the $\Phi$-permutohedron that are invariant under the action of the Weyl group $W_\Phi$. The fundamental weight polytopes are especially important; they correspond to the $W$-symmetric $\Phi$-submodular functions, which are given by the inverse Cartan matrix. Section \ref{sec:facets} describes and enumerates the facets of the $\Phi$-submodular cone, while Section \ref{sec:rays} describes some of its rays. We conclude with some future research directions in Section \ref{sec:future}.

%


\section{\textsf{Polytopes and their deformations}}\label{sec:polytopes}

In this section we review the relationship between polytopes, their support functions, and their deformation cones. For a more detailed treatment and proofs, see for example \cite{toric} or \cite{triangulations}.

\subsection{\textsf{Polytopes and their support functions}}

Let $U$ and $V$ be two real vector space of finite dimension $d$ in duality via a perfect bilinear form $\langle \cdot,\cdot\rangle: U\times V \longrightarrow \RR$. 
A \emph{polyhedron} $P\subset V$ is an intersection of finitely many half-spaces; it is a \emph{polytope} if it is bounded.  
We will regard each vector $u \in U$ as a linear functional on $V$, which gives rise to the \emph{$u$-maximal face}
\[
P_u := \{v\in P: \langle u,v\rangle =\max_{x\in P} \langle u, x \rangle\}
\]
whenever $\max_{x\in P}\langle u , x\rangle$ is finite.  

Let $\Sigma_P$ be the \emph{(outer) normal fan} in $U$. For each $\ell$-codimensional face $F$ of $P$, the normal fan $\Sigma_P$ has a dual $\ell$-dimensional face
\[
\Sigma_P(F) = \{u \in U \, : \, P_u = F\}.
\]
The support $|\Sigma_P|$ of $\Sigma_P$ is the union of its faces. It equals $U$ if $P$ is a polytope.

A polyhedron $P$ is \emph{simple} if each vertex $v\in P$ is contained in exactly $d$ facets, or equivalently if every cone in $\Sigma_P$ is \emph{simplicial} in that its generating rays are linearly independent.  Each relative interior of a cone in a fan $\Sigma$ is called an \emph{open face}.  Denote by $\Sigma(\ell)$ the set of $\ell$-dimensional cones of $\Sigma$. We call the elements of $\Sigma(d)$ \emph{chambers} and the elements of $\Sigma(d-1)$ \emph{walls}; they are the full-dimensional and 1-codimensional faces of $\Sigma$, respectively. 

\medskip
All fans we consider in this paper will be normal fans $\Sigma_P$ of polyhedra $P$, so from now on we will assume that every fan $\Sigma\subset U$ has convex support.  We say that the fan $\Sigma$ is \emph{complete} if $|\Sigma| = U$ and \emph{projective} if $\Sigma = \Sigma_P$ for some polytope $P$.

\medskip
Given a fan $\Sigma \subset U$, we denote the space of \emph{continuous piecewise linear functions on $\Sigma$} by
\[
\operatorname{PL}(\Sigma) := \{f: |\Sigma| \to \RR \ | \ f \textnormal{ linear on each cone of $\Sigma$ and continuous}\}.
\]
It is a finite-dimensional vector space, since a piecewise linear function on $\Sigma$ is completely determined by its restriction to the rays of $\Sigma$. 

The \emph{support function} of a polyhedron $P$ is the element $h_P \in \operatorname{PL}(\Sigma_P)$ defined by 
\begin{equation}\label{eq:h_P}
h_P(u):=\max_{v\in P} \langle u,v\rangle \qquad \textrm{ for } u \in |\Sigma_P|.
\end{equation}
Notice that we can recover $P$ from $h_P$ uniquely by 
\[
P = \{v\in V: \langle u, v \rangle\leq h_P(u) \ \textrm{ for all } u\in |\Sigma_P|\},
\]
so a polyhedron and its support function uniquely determine each other.

Notice that the translation $P+v$ of a polyhedron $P$ has support function $h_{P+v}=h_P+h_{\{v\}}$, where $h_{\{v\}}$ is the linear functional $\langle \cdot,v\rangle$ (restricted to $|\Sigma_P|$).  Therefore translating a polytope $P$ is equivalent to adding a global linear functional to its support function $h_P$.

\medskip
We say two polyhedra $P,Q$ are \emph{normally equivalent} (or \emph{strongly combinatorially equivalent}) if $\Sigma_P = \Sigma_Q$.  It two fans $\Sigma$ and $\Sigma'$ have the same support, we say $\Sigma$ \emph{coarsens} $\Sigma'$ (or equivalently $\Sigma'$ \emph{refines} $\Sigma$) if each cone of $\Sigma$ is a union of cones in $\Sigma'$ (or equivalently, each cone of $\Sigma'$ is a subset of a cone of $\Sigma$).  We denote this relation by $\Sigma\preceq \Sigma'$.

\subsection{\textsf{Deformations of polytopes}}

While we will be primarily interested in deformations of polytopes, we first define them for polyhedra in general.  Let $P$ be a polyhedron.

\begin{defi}\label{defi:defo}
A polyhedron $Q$ is a \emph{deformation of $P$} if the normal fan $\Sigma_Q$ is a coarsening of the normal fan $\Sigma_P$.
\end{defi}

When $P$ is a simple polytope, it is shown in \cite[Theorem 15.3]{faces} that we may think of the deformations of $P$ equivalently as being obtained by any of the following three procedures:

$\bullet$ moving the vertices of $P$ while preserving the direction of each edge, or

$\bullet$ changing the edge lengths of $P$ while preserving the direction of each edge, or

$\bullet$ moving the facets of $P$ while preserving their directions, without allowing a facet to move past a vertex.

\begin{figure}[h]
\centering
\includegraphics[scale=.4]{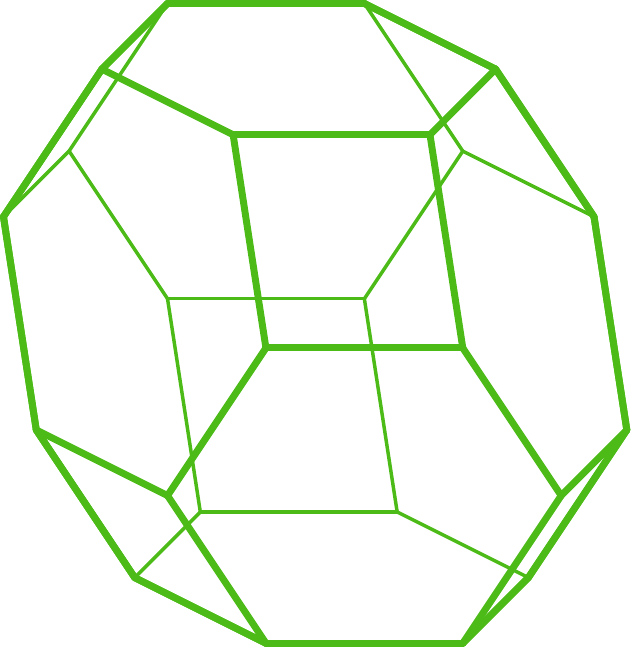} \qquad \qquad 
\includegraphics[scale=.5]{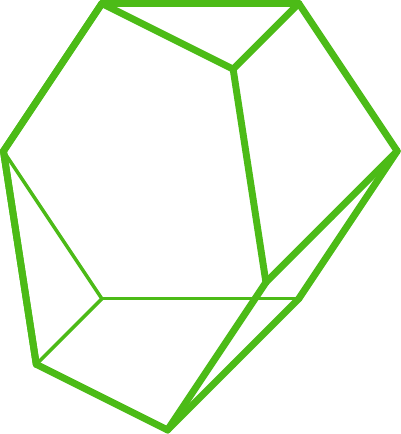}  
\caption{The standard 3-permutohedron and one of its deformations.\label{f:genperm}}
\end{figure}

\begin{rem}
More generally, a \emph{parallel redrawing} of a graph embedded into a linear space is a way to reposition its vertices so that the edges of the deformed graph are parallel to edges of the original. 
Parallel redrawings play an important role in Goresky-Kottwitz-MacPherson's theory \cite{GKM} of equivariant cohomology of GKM manifolds.  See \cite{BGH} for their implications for polytopes, \cite{FS} for applications to matroids, and \cite{PS} for a combinatorial formula for the dimension of the space of parallel redrawings of a generic embedding of a graph.
\end{rem}

By allowing certain facet directions to be unbounded in this deformation process, we obtain a larger family of polyhedra:

\begin{defi}
A polyhedron $Q$ is an \emph{extended deformation of $P$} if the normal fan $\Sigma_Q$ coarsens a convex subfan of $\Sigma_P$.
\end{defi}

In other words, an extended deformation $Q$ of a polyhedron $P$ is a deformation of a polyhedron $P'$ where $P' = \{v\in V : \langle v, u \rangle \leq h_P(u) \ \textrm{ for all } u\in |\Sigma'|\}$ for some convex subfan $\Sigma'$ of $\Sigma_P$.  Deformations are extended deformations with $\Sigma' = \Sigma_P$.

\medskip
For polytopes, Minkowski sums provide yet another way of thinking about deformations.  The \emph{Minkowski sum} of two polytopes $Q$ and $R$  in the same vector space $V$ is the polytope
\[
Q+R := \{q+r \, : \, q \in Q, r \in R\}.
\]
The support function of $Q+R$ is 
\[
h_{Q+R} = h_Q + h_R
\]
and the normal fan $\Sigma_{Q+R}$ is the coarsest common refinement of the normal fans $\Sigma_Q$ and $\Sigma_R$ \cite[Proposition 1.2]{BFS90}.  Therefore $Q$ is a deformation of $Q+R$. The next result shows that this is, up to scaling, the only source of deformations. For this reason, deformations of polytopes are also often called \emph{weak Minkowski summands}.

\begin{theorem}[Shephard \cite{grunbaum}]
Let $P$ and $Q$ be polytopes, then $Q$ is a deformation of $P$ if and only if there exist a polytope $R$ and a scalar $k >0$ such that $Q+R=k P$. 
\end{theorem}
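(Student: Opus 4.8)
The plan is to translate the statement into the language of support functions and to use the standard fact (see \cite{toric} or \cite{triangulations}) that a continuous piecewise-linear function on a complete fan is the support function of a polytope if and only if it is \emph{convex}. The backward implication is then essentially a restatement of the Minkowski-sum properties recalled above, while the forward implication is the substantive one and will rest on the \emph{strict} convexity of $h_P$.

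For the backward direction, suppose $Q+R=kP$ for a polytope $R$ and a scalar $k>0$. Since $h_{Q+R}=h_Q+h_R$ and the normal fan of a Minkowski sum is the coarsest common refinement of the normal fans of its summands \cite[Proposition 1.2]{BFS90}, we have $\Sigma_{kP}=\Sigma_{Q+R}$, and this fan refines $\Sigma_Q$. As scaling by $k>0$ leaves the normal fan unchanged, $\Sigma_{kP}=\Sigma_P$; hence $\Sigma_P$ refines $\Sigma_Q$, which is exactly the statement that $Q$ is a deformation of $P$ (Definition \ref{defi:defo}).

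For the forward direction, suppose $Q$ is a deformation of $P$, so that $\Sigma_P$ refines $\Sigma_Q$. Then $h_Q$, being linear on each cone of $\Sigma_Q$, is linear on each cone of the finer fan $\Sigma_P$, so $h_Q\in\operatorname{PL}(\Sigma_P)$; of course $h_P\in\operatorname{PL}(\Sigma_P)$ as well. I would produce $R$ by setting $h_R:=k\,h_P-h_Q$ for a suitable $k>0$: if this function is the support function of a polytope $R$, then $h_{Q+R}=h_Q+h_R=k\,h_P=h_{kP}$, and since a polytope is determined by its support function we conclude $Q+R=kP$. By the convexity characterization it therefore suffices to show that $k\,h_P-h_Q$ is convex for $k$ large enough.

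To check convexity I would use the local, wall-by-wall criterion: for a complete fan, a function in $\operatorname{PL}(\Sigma_P)$ is convex precisely when it bends convexly across every wall, a condition recorded by one linear functional $b_\tau$ on $\operatorname{PL}(\Sigma_P)$ for each wall $\tau\in\Sigma_P(d-1)$, with convexity across $\tau$ meaning $b_\tau(\,\cdot\,)\ge 0$. The crucial point is that $\Sigma_P$ is \emph{exactly} the normal fan of $P$, so $h_P$ is strictly convex: $b_\tau(h_P)>0$ for every wall $\tau$. Since $\Sigma_P$ has only finitely many walls, I may take $k$ to be a positive number that is also at least $b_\tau(h_Q)/b_\tau(h_P)$ for every $\tau$, so that $b_\tau(k\,h_P-h_Q)=k\,b_\tau(h_P)-b_\tau(h_Q)\ge 0$ for all $\tau$; thus $k\,h_P-h_Q$ is convex and hence equals $h_R$ for some polytope $R$. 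Conceptually, the convex functions form a full-dimensional closed cone in $\operatorname{PL}(\Sigma_P)$ whose interior consists of the strictly convex functions and thus contains $h_P$, and $k\,h_P-h_Q$ enters this cone once $k$ is large. The main obstacle is exactly this forward step: one must know that convex piecewise-linear functions are precisely the support functions of polytopes, and one must exploit that $h_P$ lies in the interior of the convexity cone (equivalently, that $P$'s normal fan is honestly $\Sigma_P$, giving strictly positive bends) in order to absorb the possibly non-convex $-h_Q$ by scaling $h_P$ up; everything else is formal manipulation of support functions.
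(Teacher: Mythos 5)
Your proof is correct. Note that the paper itself gives no proof of this statement: it is imported as Shephard's theorem with a citation to Gr\"unbaum, so there is no internal argument to compare against. What you have written is essentially the standard proof, and it is pleasingly self-contained relative to the paper's own toolkit: the backward direction is exactly the Minkowski-sum/normal-fan facts recalled before the theorem, and the forward direction uses only Lemma \ref{lem:wallcross} (wall-crossing) together with the observation, recorded in the paper right after Definition \ref{defi:wallcross}, that $I_{\Sigma_P,\tau}(h_P)>0$ for every wall $\tau$ of $\Sigma_P$ precisely because $\Sigma_P$ is honestly the normal fan of $P$ (so $h_P$ genuinely bends at each of its walls). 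The two small points your argument relies on are both sound: linearity of each $I_\tau$ on $\operatorname{PL}(\Sigma_P)$ lets you absorb $-h_Q$ by taking $k\ge\max_\tau I_\tau(h_Q)/I_\tau(h_P)$ over the finitely many walls, and completeness of $\Sigma_P$ guarantees that the polyhedron $R$ produced by the wall-crossing criterion is bounded, hence a polytope, so that $h_Q+h_R=h_{kP}$ forces $Q+R=kP$.
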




\subsection{\textsf{Deformations of zonotopes}}

Let $\AA = \{v_1,\ldots,v_m\} \subset V$ be a set of vectors and let 
$\mathcal H = \{H_1,\ldots, H_m\}$ be the corresponding hyperplane arrangement in $U$ given by the hyperplanes $H_i=\{u\in U: \langle u,v_i\rangle=0\}$ for $1 \leq i \leq m$.  The hyperplane arrangement $\mathcal H$ then determines a fan $\Sigma_{\mathcal H}$ whose maximal cones are the closures of the connected components of the arrangement complement.

\begin{defi}
Let $\AA = \{v_1,\ldots,v_m\} \subset V$.  The \emph{zonotope} of $\AA$ is the Minkowski sum 
\[
{\Z}(\AA):=[0,v_1]+\cdots+[0,v_m].
\]
\end{defi}

The relationship between Minkowski sums and coarsening of fans imply that the normal fan of the zonotope $\Z(\AA)$ is equal to $\Sigma_{\mathcal H}$.  We can describe the (extended) deformations of $\mathcal Z(\AA)$ easily as follows.  

\begin{prop}\label{prop:zonoedges}
Let $\AA$ be a finite set of vectors in $V$.  A polyhedron $P$ is an extended deformation of $\mathcal Z(\mathcal A)$ if and only if every face affinely spans a parallel translate of $\operatorname{span}_\RR(S)$ for some $S\subseteq \mathcal A$.  In particular, a polytope is a deformation of the zonotope $\Z(\AA)$ if and only if every edge is parallel to some vector in $\AA$.
\end{prop}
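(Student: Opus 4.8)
The plan is to translate both sides of the equivalence into a single statement about the linear spans of the cones of the normal fan $\Sigma_P$, and then reduce everything to the combinatorics of the arrangement $\mathcal{H}$, using the already-established identification of $\Sigma_{\Z(\AA)}$ with $\Sigma_{\mathcal{H}}$.

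First I would dualize the face-span condition. For a face $F$ of $P$ with normal cone $\Sigma_P(F)$, the direction space of $F$ (its affine hull translated to the origin) is the annihilator $\operatorname{span}(\Sigma_P(F))^\perp \subseteq V$. Since $\operatorname{span}_\RR(S)^\perp = \bigcap_{v_i \in S} H_i$ and every flat of $\mathcal{H}$ arises this way, the requirement that $F$ affinely span a translate of $\operatorname{span}_\RR(S)$ for some $S \subseteq \AA$ is equivalent to the requirement that $\operatorname{span}(\Sigma_P(F))$ be a flat of $\mathcal{H}$. Thus the geometric hypothesis becomes the purely fan-theoretic condition $(\star)$: every cone of $\Sigma_P$ has linear span equal to a flat of $\mathcal{H}$.

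The core step is to show that $(\star)$ holds if and only if $\Sigma_P$ coarsens a convex subfan of $\Sigma_{\mathcal{H}}$, which by definition is exactly the assertion that $P$ is an extended deformation of $\Z(\AA)$. The easy direction assumes $\Sigma_P$ coarsens a subfan $\Sigma'$ of $\Sigma_{\mathcal{H}}$: each cone $\sigma \in \Sigma_P$ is then a union of cones of $\Sigma'$, and any top-dimensional $\tau \subseteq \sigma$ among them satisfies $\operatorname{span}(\sigma) = \operatorname{span}(\tau)$, a flat since $\tau \in \Sigma_{\mathcal{H}}$. For the converse I would prove the lemma that if every face of a cone $\sigma$ spans a flat, then $\sigma$ is a union of cones of $\Sigma_{\mathcal{H}}$. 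Working inside the flat $L = \operatorname{span}(\sigma)$, the facets of $\sigma$ are again cones of $\Sigma_P$, hence span codimension-one flats inside $L$, i.e.\ they lie on hyperplanes of the arrangement induced on $L$. So the relative boundary of $\sigma$ lies in $\bigcup_i H_i$, and by connectedness each open chamber of the induced arrangement lies entirely inside or entirely outside $\sigma$; thus $\sigma$ is a union of chambers of $\mathcal{H}|_L$, all of which are cones of $\Sigma_{\mathcal{H}}$. Applying this to the maximal cones shows $|\Sigma_P|$ is a union of cones of $\Sigma_{\mathcal{H}}$, so $\Sigma' := \Sigma_{\mathcal{H}}|_{|\Sigma_P|}$ is an honest convex subfan, and the same non-straddling argument shows that each cone of $\Sigma'$ sits inside a single cone of $\Sigma_P$, giving the desired coarsening.

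Finally I would specialize to the ``in particular'' claim. When $P$ is a bounded polytope we have $|\Sigma_P| = U$, which forces the convex subfan to be all of $\Sigma_{\mathcal{H}}$, so extended deformations coincide with deformations; moreover every face of $P$ is the convex hull of its vertices, and its affine hull is spanned by its edge directions, so $(\star)$ for all faces is equivalent to its one-dimensional instances, namely that each edge of $P$ is parallel to some $v_i$. I expect the main obstacle to be the converse direction of the lemma: making precise that ``all faces span flats'' forces $\mathcal{H}$ to subdivide $\sigma$, including the subtle point that for unbounded polyhedra the edges alone do not control the unbounded directions of the faces or the shape of the support $|\Sigma_P|$, which is precisely why the general statement must quantify over all faces rather than only edges.
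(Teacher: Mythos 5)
Your proposal is correct and follows essentially the same route as the paper: both translate the face-span condition, via the duality between a face's direction space and the linear span of its normal cone, into the statement that every cone of $\Sigma_P$ spans a flat of $\mathcal{H}$, prove the forward direction by comparing spans of nested cones of equal dimension, and prove the converse by showing that the codimension-one cones lie in hyperplanes of $\mathcal{H}$ and then invoking a connectedness (non-straddling) argument to conclude that the arrangement fan, restricted to $|\Sigma_P|$, refines $\Sigma_P$. The differences are organizational rather than mathematical: the paper quotients out the lineality space to reduce to a full-dimensional fan and collects a subarrangement $\mathcal{H}'$ whose fan restricts to $\Sigma_P$, whereas you work inside the span of each maximal cone and restrict the full arrangement; you also spell out the non-straddling step and the specialization to polytopes (edges suffice, and extended deformations coincide with deformations since $|\Sigma_P|=U$), which the paper leaves implicit.
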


\begin{proof}
We start with two easy observations.  First, if two cones $\sigma \subseteq \sigma'$  have the same dimension, then $\span_\RR(\sigma) = \span_\RR(\sigma')$.
Second, if $\sigma\in \Sigma_{\mathcal H}$, then 
$\operatorname{span}_\RR(\sigma) = \bigcap_{i\in S} H_i$ for some $S\subseteq \mathcal A$.

Now, let $P$ be an extended deformation of $\Z(\AA)$ and $F$ a face of $P$.  Then since $\Sigma_P$ coarsens a convex subfan of $\Sigma_{\mathcal H}$, the cone $\Sigma_P(F)$ has the same $\RR$-span as $\operatorname{span}_\RR(\sigma)$ for some $\sigma\in \Sigma_{\mathcal H}$.  This implies that the affine span of $F$ is a parallel translate of $\operatorname{span}_\RR(S)$ for some $S\subseteq \mathcal A$.  

Conversely, assume every face of $P$ satisfies the given condition. Then the fan $\Sigma_P$ has convex support and for each maximal cone $\sigma\in\Sigma_P$, one has $\operatorname{span}_\RR(\sigma) = \bigcap_{i\in S} H_i$ for some $S\subseteq \mathcal A$.  
We may assume that $\Sigma_P$ is full dimensional: If it is not, then it is contained in a non-proper linear subspace $L = \bigcap_{j\in T} H_j$ of $U$ for some $T\subset \mathcal A$. Equivalently $P$ has a non-trivial lineality space $L^\perp := \{v\in V : \langle v, u \rangle = 0 \ \textrm{ for all } u \in L\}$; this is the maximal subspace of $V$ such that $P + L^\perp = P$. Thus we may replace $U$ with $L$, $V$ with $V/L^\perp$, and $P$ with $P/L^\perp$. 
Now, since $\Sigma_P$ is full dimensional in $U$, all of its walls are contained in some hyperplane $H_i$.  Collecting these hyperplanes gives a subarrangement $\mathcal H'$ of $\mathcal H$, whose fan $\Sigma_{\mathcal H'}$ restricted to $|\Sigma_P|$ is exactly $\Sigma_P$, as desired.  
\end{proof}

\begin{cor}\label{cor:zonofaces}
Let $\AA$ be a finite set of vectors in $V$. If $P$ is a(n extended) deformation of 
$\mathcal Z(\mathcal A)$ then any face of $P$ is a(n extended) deformation of $\mathcal Z(\mathcal A)$.
\end{cor}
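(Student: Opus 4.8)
The plan is to deduce Corollary \ref{cor:zonofaces} directly from the face-characterization in Proposition \ref{prop:zonoedges}, by showing that the defining condition on faces is inherited by passing to a face. The key observation is that a face of a face of $P$ is again a face of $P$, so the geometric condition ``every face affinely spans a parallel translate of $\operatorname{span}_\RR(S)$ for some $S\subseteq\AA$'' is automatically stable under restriction.

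More precisely, suppose $P$ is an extended deformation of $\Z(\AA)$, and let $G$ be a face of $P$. I want to show $G$ is itself an extended deformation of $\Z(\AA)$, which by Proposition \ref{prop:zonoedges} amounts to checking that every face $F$ of $G$ affinely spans a parallel translate of $\operatorname{span}_\RR(S)$ for some $S\subseteq\AA$. The first step is the standard polytope fact that every face of $G$ is also a face of $P$: faces of a polytope are exactly its $u$-maximal faces $P_u$, and the faces of $P_u$ are precisely the faces of $P$ contained in it. Granting this, any face $F$ of $G$ is a face of $P$, and since $P$ satisfies the hypothesis of Proposition \ref{prop:zonoedges}, the face $F$ already has the required affine-span property. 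Hence $G$ satisfies the condition, and $G$ is an extended deformation of $\Z(\AA)$.

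For the non-extended (ordinary deformation) case, I would use the ``in particular'' clause of Proposition \ref{prop:zonoedges}: a polytope is a deformation of $\Z(\AA)$ if and only if every edge is parallel to some vector in $\AA$. Again, every edge of a face $G$ of $P$ is an edge of $P$, so if all edges of $P$ are parallel to vectors of $\AA$, the same holds for all edges of $G$; thus $G$ is a deformation of $\Z(\AA)$. The two cases together give the full statement with the parenthetical reading.

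The only point requiring a little care --- and the place I would flag as the main (though minor) obstacle --- is the transitivity of the face relation, namely that a face of a face of $P$ is a face of $P$, and correspondingly that an edge of a face is an edge of $P$. This is a routine fact about polytopes but should be stated explicitly since the entire argument rests on it. Once it is in hand, the corollary is immediate and no genuine computation is needed: the characterization in Proposition \ref{prop:zonoedges} is phrased purely in terms of faces, so it transfers to any face for free.
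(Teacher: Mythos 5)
Your proof is correct and matches the paper's (implicit) argument: the corollary is stated as an immediate consequence of Proposition \ref{prop:zonoedges}, precisely because the characterization there is phrased in terms of faces (or edges), and faces of a face of $P$ are faces of $P$. Your explicit handling of both the extended and ordinary cases, together with flagging the transitivity of the face relation, is exactly the intended reasoning.
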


\subsection{\textsf{Deformation cones}}

Let $P$ be a polyhedron in $V$ and $\Sigma = \Sigma_P$ be its normal fan in $U$.  In this section we will assume  $\Sigma$ is full dimensional. This results in no loss of generality, as shown in the proof of Proposition \ref{prop:zonoedges}.
%

\medskip
For each deformation $Q$ of $P$, the normal fan $\Sigma_Q$ coarsens $\Sigma$, and hence the support function $h_Q$ defined in \eqref{eq:h_P} is piecewise-linear on $\Sigma$.  Thus, by identifying $Q$ with its support function $h_Q$, we see that the deformations of $P$ form a cone.

\begin{theorem}\label{th:defcone} \cite[Theorems 6.1.5--6.1.7]{toric}.
Let $P$ be a polyhedron in $V$ and $\Sigma = \Sigma_P$ be its normal fan. 
The \emph{deformation cone} of $P$ (or of $\Sigma$) is
\begin{eqnarray*}
\df(P) = \df(\Sigma) & := & \{h_Q \, | \, Q \textrm{ is a deformation of } P\} \\
&=& \{h_Q\in \operatorname{PL}(\Sigma) \ | \ \Sigma_Q\preceq \Sigma\} \\
&=& \{h \in \operatorname{PL}(\Sigma) \ | \ h \textrm{ is convex}\}. 
\end{eqnarray*}
\end{theorem}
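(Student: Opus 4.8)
The plan is to establish the three displayed descriptions of $\df(P)$ as a chain of containments, where the equality of the first two sets is essentially a restatement of the definition and the genuine content lies in recognizing \emph{convexity} as the decisive condition. I would first dispose of the equality between the first two sets. By Definition \ref{defi:defo}, a polyhedron $Q$ is a deformation of $P$ precisely when $\Sigma_Q \preceq \Sigma$, so these two descriptions differ only in that the second records the support function $h_Q$ in place of $Q$ itself. The single observation needed is that $\Sigma_Q \preceq \Sigma$ forces $h_Q \in \operatorname{PL}(\Sigma)$: each cone of $\Sigma$ lies inside a cone of $\Sigma_Q$ on which $h_Q$ is linear by \eqref{eq:h_P}, so $h_Q$ is linear on every cone of $\Sigma$ and continuous, hence piecewise linear on $\Sigma$. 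Since a polyhedron is recovered uniquely from its support function, identifying $Q$ with $h_Q$ is a bijection, giving the first equality.

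Next, for the inclusion of the second set into the third, I would note that for any polyhedron $Q$ the support function $h_Q(u) = \max_{v \in Q}\langle u, v\rangle$ is, wherever finite, a pointwise maximum (indexed by the vertices of $Q$) of linear functionals on $U$; such a maximum is convex on the convex set $|\Sigma|$. Thus every $h_Q$ appearing in the second set is a convex element of $\operatorname{PL}(\Sigma)$.

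The crux is the reverse inclusion: given a convex $h \in \operatorname{PL}(\Sigma)$, I must produce a deformation $Q$ of $P$ with $h_Q = h$. Since $\Sigma$ is full dimensional, on each chamber $\sigma \in \Sigma(d)$ the restriction of $h$ is a linear functional $\langle \cdot, v_\sigma\rangle$ for a unique $v_\sigma \in V$. The key point, and the one place convexity is used in an essential way, is that for a piecewise linear function on a fan with convex support, convexity is equivalent to each linear piece being a global lower bound: $\langle u, v_\sigma\rangle \le h(u)$ for all $u \in |\Sigma|$ and all chambers $\sigma$, with equality when $u \in \sigma$; equivalently $h(u) = \max_{\sigma} \langle u, v_\sigma\rangle$ on $|\Sigma|$. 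Granting this, and assuming first that $P$ is a polytope, I would set $Q := \conv\{v_\sigma : \sigma \in \Sigma(d)\}$. Then $h_Q(u) = \max_\sigma \langle u, v_\sigma\rangle = h(u)$, so $h_Q = h$; and because $v_\sigma$ is the $u$-maximal vertex of $Q$ for every $u$ in the interior of $\sigma$, the normal cone of $v_\sigma$ in $Q$ contains $\sigma$, whence $\Sigma_Q$ coarsens $\Sigma$ and $Q$ is a deformation of $P$.

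The main obstacle is precisely the equivalence invoked above between global convexity of $h$ and the condition that each linear piece lies weakly below $h$ everywhere. Locally this amounts to a wall-crossing inequality across each wall of $\Sigma$, saying that the two linear pieces meeting along a wall bend in the convex direction; promoting these finitely many local inequalities to the global statement $h = \max_\sigma \langle \cdot, v_\sigma\rangle$ is where convexity of the support $|\Sigma|$ enters, and it is the technical core of the argument. A secondary bookkeeping issue is the unbounded case: when $P$ is a genuine polyhedron, $Q$ must carry a recession cone, so I would instead realize it as $Q = \{v \in V : \langle u, v\rangle \le h(u) \text{ for all } u \in |\Sigma|\}$, reduce its a priori infinitely many defining inequalities to the finitely many indexed by the rays of $\Sigma$ using that $h$ is linear on each cone, and then verify exactly as above that its support function is $h$ and its normal fan coarsens $\Sigma$.
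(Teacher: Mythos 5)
Your proposal cannot be checked against an internal argument, because the paper does not actually prove this statement: Theorem \ref{th:defcone} is imported wholesale from \cite[Theorems 6.1.5--6.1.7]{toric}, and what you have written is essentially a reconstruction of the proof found in that reference. Your outline is correct: the equality of the first two sets is indeed just Definition \ref{defi:defo} together with the observation that $\Sigma_Q \preceq \Sigma$ forces $h_Q \in \operatorname{PL}(\Sigma)$; the inclusion into the convex functions holds because a support function is a supremum of linear functionals over the convex set $|\Sigma|$; and the reverse inclusion is correctly reduced to building $Q$ from the chamber data $\{v_\sigma\}$. One clarification about where the real work lies, since you flag it as the main obstacle: the equivalence you ``grant'' --- that convexity of $h$ forces each linear piece to be a global lower bound, hence $h = \max_\sigma \langle \cdot, v_\sigma\rangle$ on $|\Sigma|$ --- does not require promoting local wall-crossing inequalities to a global statement. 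It follows from a short segment argument: for $u_0$ interior to a chamber $\sigma$ and any $u \in |\Sigma|$, the segment $[u_0,u]$ lies in $|\Sigma|$ by convexity of the support, $h$ agrees with $\langle \cdot, v_\sigma \rangle$ near $u_0$, and one-variable convexity of $t \mapsto h(u_0 + t(u-u_0))$ then yields $h(u) \geq \langle u, v_\sigma\rangle$. The genuinely delicate statement --- that the finitely many local wall-crossing inequalities already imply global convexity --- is a different result, which the paper records separately as Lemma \ref{lem:wallcross}, and it is not needed here because in the third set of the present theorem convexity is the \emph{hypothesis}. With that clarification, your construction of $Q$ (as $\conv\{v_\sigma : \sigma \in \Sigma(d)\}$ in the bounded case, or via the $H$-description $\{v \in V : \langle u,v\rangle \leq h(u) \textrm{ for all } u \in |\Sigma|\}$ in general, where one should also note that the recession cone of this $Q$ is the polar cone of $|\Sigma|$, so that $|\Sigma_Q| = |\Sigma|$ and the coarsening relation makes sense under the paper's definition) completes the argument along the same lines as the cited source.
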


\begin{rem}
For each ray $\rho \in \Sigma(1)$ let $u_\rho$ be a vector in the direction of $\rho$. When $\Sigma$ is a rational fan, we let $u_\rho$ be the first lattice point on the ray $\rho$. Let $\mathcal R = \{u_{\rho} \, : \, \rho \in \Sigma(1)\}$. A piecewise linear function on $\Sigma$ is determined by its values on each $u_\rho$, so we may regard it as a function $h: \mathcal R \rightarrow \RR$. Therefore we can think of $\operatorname{PL}(\Sigma)$ as a subspace of  $\RR^{\mathcal R}$. We have
\[
 \operatorname{PL}(\Sigma) \cong  \RR^{\mathcal R} \qquad \textrm{ if $\Sigma$ is simplicial}
 \]
since in this case the values $h(u_\rho)$ may be chosen arbitrarily.
\end{rem}

It is known that $\operatorname{Def}(\Sigma)$ is a polyhedral cone of dimension $\dim_\RR \operatorname{PL}(\Sigma)$. There is a \emph{wall-crossing criterion}, consisting of finitely many linear inequalities, to test whether a piecewise linear function $h \in \PL(\Sigma) \subseteq \RR^R$ is convex. We now review two versions of this  criterion: a general one in Section \ref{sec:wallcrossing}, and a simpler one that holds for simple polytopes (or simplicial fans) in Section \ref{sec:Batyrev}.

\subsubsection{\textsf{The wall crossing criterion} \label{sec:wallcrossing}}

\begin{defi}\label{defi:wallcross} \emph{(Wall-crossing inequalities)}
Let $\tau\in \Sigma(d-1)$ be a wall separating two chambers $\sigma$ and $\sigma'$ of $\Sigma$.  
Choose any $d-1$ linearly independent rays $\rho_1, \ldots, \rho_{d-1}$ of $\tau$ and any two rays $\rho, \rho'$ of $\sigma,\sigma'$, respectively, that are not in $\tau$. Up to scaling, there is a unique linear dependence of the form
\begin{equation}\label{eq:wallcross}
\displaystyle c\cdot u_{\rho} + c'\cdot u_{\rho'} =  \sum_{i=1}^{d-1}c_i\cdot u_{\rho_{i}} 
\end{equation}
with $c, c' >0$. To the wall $\tau$ we associate the \emph{wall-crossing inequality}
\begin{equation}\label{ineq:wallcross}
\displaystyle  
I_{\Sigma,\tau}(h) :=  c\cdot h(u_{\rho}) + c'\cdot h(u_{\rho'}) - \sum_{i=1}^{d-1}c_i\cdot h(u_{\rho_{i}}) \geq 0,
\end{equation}
which a piecewise linear function $h \in \PL(\Sigma)$ must satisfy in order to be convex.
\end{defi}

We will often write $I_\tau(h)$ instead of $I_{\Sigma,\tau}(h)$ when there is no potential confusion in doing so. 
We are mostly interested  in cases where the fan $\Sigma$ is complete and simplicial, where
there is no choice for $\rho_1, \ldots, \rho_{d-1}$ and $\rho, \rho'$. In general, since $h$ is linear in $\sigma$ and in $\sigma'$, different choices of the $d-1$ rays $\rho_1, \ldots, \rho_{d-1}$ and the two rays $\rho, \rho'$ give rise to equivalent wall-crossing inequalities. Therefore the element $I_{\tau} \in \PL(\Sigma)^\vee$ is well-defined up to positive scaling. Notice that $I_{\tau}(h)=0$ if and only if $h$ is represented by the same linear functional at both sides on $\tau$, which happens if and only if $\tau$ is no longer a wall in the fan of lineality domains of $h$.

%

\begin{lem} \emph{(Wall-Crossing Criterion)}\label{lem:wallcross} \cite[Theorems 6.1.5--6.1.7]{toric}  Let $\Sigma$ be a full dimensional fan with convex support in $U$. A continuous piecewise linear function $h\in \operatorname{PL}(\Sigma)$ is a support function of a polyhedron $Q$ with $\Sigma_Q \preceq \Sigma$ if and only if it satisfies the wall-crossing inequality $I_{\Sigma,\tau}(h)\geq 0$, as defined in \eqref{ineq:wallcross}, for each wall $\tau$ of $\Sigma$.
\end{lem}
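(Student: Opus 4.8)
The plan is to invoke Theorem~\ref{th:defcone}, which already identifies the support functions $h_Q$ of deformations $Q$ with $\Sigma_Q\preceq\Sigma$ as precisely the \emph{convex} functions in $\operatorname{PL}(\Sigma)$. This reduces the lemma to the convex-geometric equivalence: a continuous piecewise linear $h$ on the convex cone $|\Sigma|$ is convex if and only if $I_{\Sigma,\tau}(h)\ge 0$ for every wall $\tau$. The forward implication is ``global convexity $\Rightarrow$ local convexity,'' which is immediate; the reverse implication, ``local convexity at every wall $\Rightarrow$ global convexity,'' carries the real content.

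For the forward direction I would fix a wall $\tau$ separating chambers $\sigma,\sigma'$ together with the dependence $c\,u_\rho+c'\,u_{\rho'}=\sum_i c_i u_{\rho_i}$ from \eqref{eq:wallcross}. The point $x:=\tfrac{1}{c+c'}(c\,u_\rho+c'\,u_{\rho'})$ is where the segment $[u_\rho,u_{\rho'}]$ meets the wall, so $x\in\tau$ and its coordinates $\tfrac{c_i}{c+c'}$ in the ray generators of $\tau$ are nonnegative. Since $h$ is convex, $h(x)\le \tfrac{c}{c+c'}h(u_\rho)+\tfrac{c'}{c+c'}h(u_{\rho'})$, while linearity of $h$ on the cone $\tau$ gives $h(x)=\tfrac{1}{c+c'}\sum_i c_i\,h(u_{\rho_i})$. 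Combining these and clearing denominators is exactly $I_{\Sigma,\tau}(h)\ge 0$.

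For the reverse direction I would write, on each chamber $\sigma$, $h|_\sigma=\langle\,\cdot\,,\ell_\sigma\rangle$ for a unique $\ell_\sigma\in V$. Continuity of $h$ across $\tau=\overline\sigma\cap\overline{\sigma'}$ shows $\ell_{\sigma'}-\ell_\sigma$ vanishes on the hyperplane $\operatorname{span}(\tau)$, and a short computation rewrites the wall-crossing inequality as $I_{\Sigma,\tau}(h)=c'\,\langle u_{\rho'},\,\ell_{\sigma'}-\ell_\sigma\rangle\ge 0$; thus $\ell_{\sigma'}-\ell_\sigma$ is a nonnegative multiple of the conormal of $\tau$ oriented toward $\sigma'$. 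Consequently, crossing $\tau$ from $\sigma$ into $\sigma'$ along any direction $w$ pointing to the $\sigma'$ side, the slope jump $\langle w,\ell_{\sigma'}-\ell_\sigma\rangle$ is nonnegative. Fixing a chamber $\sigma_0$ and setting $g:=h-\langle\,\cdot\,,\ell_{\sigma_0}\rangle$, I would travel along a segment from a point of $\operatorname{int}(\sigma_0)$ to an arbitrary $u\in|\Sigma|$, which is possible since $|\Sigma|$ is convex: the restriction of $g$ to this segment is a continuous piecewise linear function of one variable whose slope only increases at each wall crossing, starting at value $0$ and slope $0$ while inside $\sigma_0$. Hence it is convex and nonnegative, giving $h(u)\ge\langle u,\ell_{\sigma_0}\rangle$. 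As $\sigma_0$ and $u$ were arbitrary, $h=\max_\sigma\langle\,\cdot\,,\ell_\sigma\rangle$ is convex, and Theorem~\ref{th:defcone} then promotes it to the support function of a polyhedron $Q$ with $\Sigma_Q\preceq\Sigma$.

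The main obstacle is the genericity built into this path argument: a straight segment may fail to cross walls transversally, instead meeting a face of codimension $\ge 2$ where the slope-jump bookkeeping breaks down. I would handle this by choosing the endpoints generically so that the open segment avoids all cones of codimension $\ge 2$ — possible since these form a finite union of lower-dimensional sets — proving $g\ge 0$ on a dense subset of $|\Sigma|$, and then invoking continuity of $h$ to extend the inequality everywhere. The remaining points (that $x\in\tau$ with nonnegative coordinates in the forward direction, and that $\ell_{\sigma'}-\ell_\sigma$ is correctly oriented in the reverse direction) are configuration facts about adjacent chambers that are routine in the simplicial case of primary interest and otherwise follow as in \cite{toric}.
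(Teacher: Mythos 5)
Your overall route is the same as the paper's: both arguments reduce, via Theorem~\ref{th:defcone}, to the equivalence ``convex $\Leftrightarrow$ all wall-crossing inequalities hold,'' and your reverse (hard) direction is a correct and substantially more complete version of the locality argument that the paper only sketches. In particular, the identity $I_{\Sigma,\tau}(h)=c'\langle u_{\rho'},\ell_{\sigma'}-\ell_\sigma\rangle$, the monotone slope jumps along a generic segment leaving $\operatorname{int}(\sigma_0)$, the density-plus-continuity patch for segments meeting codimension~$\ge 2$ faces, and the conclusion $h=\max_\sigma\langle\cdot,\ell_\sigma\rangle$ are all sound.

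There is, however, a genuine error in your forward direction. You assert that $x=\tfrac{1}{c+c'}(c\,u_\rho+c'\,u_{\rho'})$ lies in $\tau$ with nonnegative coordinates $\tfrac{c_i}{c+c'}$, and you defer this as a ``routine configuration fact'' in the simplicial case. It is false, even for complete simplicial fans: in the fan of $\mathbb{P}^2$, with rays $e_1,\,e_2,\,-e_1-e_2$, take the wall $\tau=\operatorname{cone}(e_2)$ separating $\sigma=\operatorname{cone}(e_1,e_2)$ from $\sigma'=\operatorname{cone}(e_2,-e_1-e_2)$; the dependence \eqref{eq:wallcross} is $1\cdot e_1+1\cdot(-e_1-e_2)=(-1)\cdot e_2$, so $c_1=-1<0$ and $x=-\tfrac12 e_2$ lies in the opposite chamber, not in $\tau$. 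Consequently ``linearity of $h$ on $\tau$'' does not compute $h(x)$, and your two displayed relations for $h(x)$ cannot be combined. (Nonnegativity of the $c_i$ does hold for the Coxeter fans that are the paper's main concern, where $c_j=-A_{ji}\ge 0$, but the lemma is stated for arbitrary full-dimensional fans with convex support.) The repair is standard and is in fact already contained in your reverse-direction computation: either anchor at a point $z\in\operatorname{relint}(\tau)$ and apply convexity to $(1-t)z+t\,u_\rho\in\sigma$ and $(1-t)z+t\,u_{\rho'}\in\sigma'$ for small $t>0$, noting that the relevant combination $(1-t)z+\tfrac{t}{c+c'}\sum_i c_i u_{\rho_i}$ \emph{does} lie in $\tau$ for $t$ small; or observe that convexity forces the one-sided directional derivatives across $\tau$ to satisfy $\langle w,\ell_{\sigma'}-\ell_\sigma\rangle\ge 0$ for any $w$ pointing to the $\sigma'$ side, which yields $I_{\Sigma,\tau}(h)=c'\langle u_{\rho'},\ell_{\sigma'}-\ell_\sigma\rangle\ge 0$ directly.
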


\begin{proof}[Sketch of Proof.] 
To check whether $h$ is convex, it suffices to check its convexity on a line segment $xy$. Furthermore, it suffices to check this condition locally,  on short segments $xy$ where $x$ and $y$ are in adjacent domains of lineality $\sigma$ and $\sigma'$. If $\tau = \sigma \cap \sigma'$ is the wall separating $\sigma$ and $\sigma'$ and $z = xy \cap \tau$, it is enough to check convexity between the extreme points $x$ and $y$ and their intermediate point $z$. One then verifies, using the linearity of $h$ in $\sigma$, that it is enough to check this when $x$ and $y$ are rays of $\sigma$ and $\sigma'$ respectively; but these are precisely the wall-crossing inequalities \eqref{ineq:wallcross}.
\end{proof}

We now describe the deformation cones for polytopes.  Note that $V$ embeds into $\operatorname{PL}(\Sigma)$ by $v\mapsto \langle v, \cdot \rangle$.  The following is a rephrasing of \cite[4.2.12, 6.3.19--22]{toric}.

\begin{prop}\label{def:nef}
Let $\Sigma$ be the normal fan of a polytope $P$.  Say $h\sim h'$ for two functions $h,h'\in \operatorname{PL}(\Sigma)$ if $h-h'$ is a globally linear function on $U$, or equivalently, if $h-h' \in V \subset \operatorname{PL}(\Sigma)$. Then:
\begin{itemize}
\item \textit{Def Cone: }$ \df(\Sigma)$ is the polyhedral cone parametrizing deformations of $P$. It is the full dimensional cone in $\operatorname{PL}(\Sigma)$ cut out by the wall-crossing inequalities $I_{\Sigma,\tau}(h)\geq 0$ for each wall $\tau$ of $\Sigma$. Its lineality space is the $d$-dimensional space $V \subset \PL(\Sigma)$ of global linear functions on $|\Sigma| = U$, corresponding to the $d$-dimensional space of translations of $P$.
\item \textit{Nef Cone: }$ \nf(\Sigma) := \operatorname{Def}(\Sigma)/V = \operatorname{Def}(\Sigma)/\sim$ is the quotient of $\df(\Sigma)$ by its lineality space $V$ of globally linear functions. It is a strongly convex cone in $\operatorname{PL}(\Sigma)/V$ parametrizing the deformations of $P$ up to translation.
\end{itemize}
\end{prop}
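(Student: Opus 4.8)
The plan is to assemble this statement from the tools already in place: Theorem \ref{th:defcone} identifies the deformation cone with the convex piecewise-linear functions on $\Sigma$, and the Wall-Crossing Criterion (Lemma \ref{lem:wallcross}) converts the convexity condition into finitely many linear inequalities. What then remains is to pin down the lineality space and transfer the description to the quotient. Throughout I would use that $\Sigma = \Sigma_P$ is the normal fan of a \emph{polytope}, so it is complete ($|\Sigma| = U$) and full dimensional.

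For the \emph{Def Cone}, I would first invoke Theorem \ref{th:defcone} to write $\df(\Sigma) = \{h \in \PL(\Sigma) : h \textnormal{ is convex}\}$. By Lemma \ref{lem:wallcross}, a function $h \in \PL(\Sigma)$ is convex precisely when $I_{\Sigma,\tau}(h) \geq 0$ for every wall $\tau$ of $\Sigma$; since $\Sigma$ has finitely many walls, this exhibits $\df(\Sigma)$ as an intersection of finitely many closed half-spaces in the finite-dimensional space $\PL(\Sigma)$, hence as a polyhedral cone. To see that it is full dimensional, I would use $h_P$ itself: because the fan of lineality domains of $h_P$ is exactly $\Sigma_P = \Sigma$, every wall of $\Sigma$ remains a genuine wall, so by the discussion following Definition \ref{defi:wallcross} one has $I_{\Sigma,\tau}(h_P) \neq 0$, and since $h_P$ is convex in fact $I_{\Sigma,\tau}(h_P) > 0$ strictly for every $\tau$. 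Thus $h_P$ satisfies all defining inequalities strictly and lies in the interior of $\df(\Sigma)$, so the cone is full dimensional.

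Next I would identify the lineality space. A function $h$ lies in the lineality space iff both $h$ and $-h$ satisfy all wall-crossing inequalities, i.e. $I_{\Sigma,\tau}(h) = 0$ for every wall $\tau$. By the discussion following Definition \ref{defi:wallcross}, this means $h$ is represented by the same linear functional on both sides of each wall. Since $\Sigma$ is the complete fan of a polytope, any two chambers can be joined by a sequence of chambers in which consecutive ones share a wall (codimension-one connectivity); propagating the equalities along such a path forces $h$ to be given by a single global linear functional on all of $|\Sigma| = U$. Conversely, every global linear functional is convex with convex negative and hence lies in the lineality space. Finally, the perfect pairing $\langle \cdot, \cdot \rangle \colon U \times V \to \RR$ identifies the space of linear functionals on $U$ with $V$ via $v \mapsto \langle \cdot, v\rangle$, so the lineality space is exactly the $d$-dimensional subspace $V \subset \PL(\Sigma)$; by the observation that translating $P$ by $v$ adds $h_{\{v\}} = \langle \cdot, v\rangle$ to $h_P$, this subspace corresponds precisely to the translations of $P$.

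For the \emph{Nef Cone}, quotienting a polyhedral cone by its lineality space yields a strongly convex (pointed) polyhedral cone, so $\nf(\Sigma) = \df(\Sigma)/V$ is a strongly convex cone in $\PL(\Sigma)/V$. Since two deformations $Q, Q'$ have support functions differing by an element of $V$ exactly when $Q'$ is a translate of $Q$, the cone $\nf(\Sigma)$ parametrizes the deformations of $P$ up to translation. The one genuinely non-formal step is the identification of the lineality space with $V$, which rests on the codimension-one connectivity of the complete fan $\Sigma$; everything else is bookkeeping layered on top of Theorem \ref{th:defcone} and Lemma \ref{lem:wallcross}.
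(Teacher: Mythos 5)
Your proof is correct. Note, though, that the paper gives no proof of this proposition at all: it is introduced as ``a rephrasing of \cite[4.2.12, 6.3.19--22]{toric}'', so the full-dimensionality, the identification of the lineality space, and the strong convexity of the quotient are simply delegated to the toric-varieties literature. Your argument supplies a self-contained derivation from the two ingredients the paper does state, namely Theorem \ref{th:defcone} and Lemma \ref{lem:wallcross}, together with the remark following Definition \ref{defi:wallcross}. The places where you add genuine content beyond the citation are exactly the right ones: (i) full-dimensionality via the observation that $h_P$ satisfies every wall-crossing inequality strictly, since the domains of linearity of $h_P$ are precisely the chambers of $\Sigma_P$ (adjacent chambers correspond to distinct vertices of $P$, hence distinct linear functionals), so no $I_{\Sigma,\tau}(h_P)$ can vanish; (ii) the computation of the lineality space, where you correctly reduce membership to $I_{\Sigma,\tau}(h)=0$ for all walls and then propagate the resulting local linearity across walls using connectivity of the chamber graph of the complete fan $\Sigma$ --- this is, as you say, the one non-formal step, and it is sound; and (iii) strong convexity of $\nf(\Sigma)$, which is the standard fact that a polyhedral cone modulo its lineality space is pointed. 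What the paper's citation buys is brevity and an appeal to a carefully written reference; what your route buys is that the proposition becomes an internal consequence of the wall-crossing machinery already set up in Section \ref{sec:wallcrossing}, rather than an independent import.
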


Two things must be kept in mind when applying Lemma \ref{lem:wallcross}. It is not true that all the wall crossing inequalities are facet defining for $\df(\Sigma)$. Furthermore, it may happen that two walls give the exact same inequality. Both situations are illustrated in \cite[Example 2.13]{deformation}.

\medskip

When $\Sigma$ is a rational fan, it has an associated toric variety $X(\Sigma)$ \cite[Chapter 6.3]{toric}, and  $\nf(\Sigma)$  is the \emph{Nef (numerically effective) cone} of the toric variety $X(\Sigma)$.
 The \emph{Mori cone} $\overline{NE}(\Sigma)$ of $\Sigma$ is
$$\overline{NE}(\Sigma) := \operatorname{Cone}\left(I_{\Sigma,\tau} \ | \ \tau\in \Sigma(d-1)\right) \subseteq \operatorname{PL}(\Sigma)^\vee.$$
The Wall-Crossing Criterion of Lemma \ref{lem:wallcross} states that 
the Nef cone and the Mori cone are dual cones in $\operatorname{PL}(\Sigma)/V$ and $(\operatorname{PL}(\Sigma)/V)^\vee$, respectively; in the toric setting, this is \cite[Theorem 6.3.22]{toric}.  The structure of the strongly convex cones $\operatorname{Nef}(\Sigma)$ and $\overline{NE}(\Sigma)$ plays an important role in the geometry of the minimal model program for associated toric varieties.  For details in this direction see \cite[\S15]{toric}.

\subsubsection{\textsf{Batyrev's criterion}\label{sec:Batyrev}}

When $\Sigma$ is simplicial, Batyrev's criterion (\cite[Lemma 6.4.9]{toric}) offers another useful test for convexity, and hence an alternative description of the deformation cone $\operatorname{Def}(\Sigma) = \operatorname{Def}(P)$ when $\Sigma = \Sigma_P$. To state it, we need the following notion.

\begin{defi} Let $\Sigma$ be a simplicial fan.  A \emph{primitive collection} $F$ is a set of rays of $\Sigma$ such that any proper subset $F'\subsetneq F$ forms a cone in $\Sigma$ but $F$ itself does not. In other words, the primitive collections of a simplicial fan correspond to the minimal non-faces of the associated simplicial complex.
\end{defi}

\begin{lem}\label{lem:Batyrev} \emph{(Batyrev's Criterion)} \cite[Theorem 6.4.9]{toric}
Let $\Sigma$ be a complete simplicial fan.
A piecewise linear function $h\in \operatorname{PL}(\Sigma)$ is in the deformation cone $\operatorname{Def}(\Sigma)$ (and hence the support function of a polytope)
if and only if
\[
\sum_{\rho\in F} h(u_\rho)
\geq
h\left(\sum_{\rho \in F} u_\rho\right) 
\]
for any primitive collection $F$ of rays of $\Sigma$.
\end{lem}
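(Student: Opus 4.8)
The plan is to derive Batyrev's criterion from the Wall-Crossing Criterion of Lemma~\ref{lem:wallcross}, which already establishes that convexity of $h\in\operatorname{PL}(\Sigma)$ is equivalent to the wall-crossing inequalities $I_{\Sigma,\tau}(h)\geq 0$. Since $\Sigma$ is complete and simplicial, each wall $\tau\in\Sigma(d-1)$ separates exactly two chambers $\sigma=\tau\cup\{\rho\}$ and $\sigma'=\tau\cup\{\rho'\}$, and the linear dependence \eqref{eq:wallcross} takes a clean normalized form. The first step is to unpack this dependence concretely: given the wall $\tau$ with rays $\rho_1,\dots,\rho_{d-1}$ and the two opposite rays $\rho,\rho'$, there are unique positive scalars making $c\,u_\rho+c'\,u_{\rho'}=\sum_i c_i\,u_{\rho_i}$, and one should track how this relation interacts with the combinatorics of faces.

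The key conceptual bridge is that primitive collections, rather than individual walls, give the facet-level data. First I would show that each wall-crossing inequality is a consequence of (a nonnegative combination of) primitive-collection inequalities, and conversely. The natural way to carry this out is to analyze a primitive collection $F=\{\rho_1,\dots,\rho_k\}$: every proper subset spans a cone, so $\sum_{\rho\in F}u_\rho$ lies in the relative interior of some cone $\sigma_F$ of $\Sigma$, on which $h$ is linear. Since $h$ is linear on $\sigma_F$, one has the identity $h\bigl(\sum_{\rho\in F}u_\rho\bigr)=\sum_{\rho}a_\rho\,h(u_\rho)$ where $\sum_\rho a_\rho\,u_{\rho}=\sum_{\rho\in F}u_\rho$ expresses the point in terms of the rays of $\sigma_F$. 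The primitive-collection inequality then compares the ``honest'' value $\sum_{\rho\in F}h(u_\rho)$ with this linearized value, which is exactly a convexity condition at the point $\sum_{\rho\in F}u_\rho$.

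The heart of the argument is a two-way reduction. For one direction, I would show that a single wall-crossing inequality \eqref{ineq:wallcross} \emph{is} a primitive-collection inequality (or is implied by one): the set $\{\rho,\rho'\}$ together with the relevant $\rho_i$ that appear with nonzero coefficient in the dependence forms, after rescaling $u_\rho$-representatives, a primitive collection, and the wall-crossing relation is precisely the linear dependence encoding $\sum h(u_\rho)\geq h(\sum u_\rho)$. For the reverse direction, given a primitive collection $F$, I would decompose the convexity check at the interior point $\sum_{\rho\in F}u_\rho$ into a telescoping sequence of single wall crossings along a path of chambers joining the relevant cones, using the locality of convexity (checking on short segments crossing one wall at a time) exactly as in the proof sketch of Lemma~\ref{lem:wallcross}. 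Since $h$ is piecewise linear and already known convex iff all wall inequalities hold, summing the wall inequalities along such a path yields the primitive-collection inequality, and vice versa.

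The main obstacle I anticipate is the bookkeeping in the reverse implication: verifying that the point $\sum_{\rho\in F}u_\rho$ indeed lands inside a genuine cone of $\Sigma$ and that its expansion in that cone's rays produces coefficients matching the wall-crossing dependences, so that the two inequality systems cut out the same cone rather than merely comparable ones. This requires care with the primitive-relation coefficients and the choice of primitive lattice (or direction) vectors $u_\rho$, and a clean way to see that no extraneous inequalities are introduced. Rather than reprove these facts from scratch, the cleanest route is to cite \cite[Theorem 6.4.9]{toric} for the precise equivalence while giving the geometric dictionary above, emphasizing that in the simplicial complete case the primitive collections index exactly the minimal non-faces and that the convexity condition localizes to these minimal obstructions.
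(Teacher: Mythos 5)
The first thing to note is that the paper does not prove this lemma at all: it is imported verbatim from \cite[Theorem 6.4.9]{toric}. So your closing fallback --- ``cite \cite{toric} for the precise equivalence'' --- coincides exactly with the paper's treatment, and by itself would be acceptable.

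However, the derivation you sketch has a genuine gap in the hard direction (primitive-collection inequalities $\Rightarrow$ convexity), and the gap is located precisely at your key mechanism: the claim that the wall-crossing inequality \eqref{ineq:wallcross} for a wall $\tau$ with opposite rays $\rho,\rho'$ ``is'' the primitive-collection inequality for the set $\{\rho,\rho'\}\cup\{\rho_i : c_i\neq 0\}$. That set need not be a primitive collection. Concretely, let $\Sigma$ be the fan of the blow-up of $\mathbb{P}^2$ at a torus-fixed point, with rays $u_1=e_1$, $u_2=e_2$, $u_3=e_1+e_2$, $u_4=-e_1-e_2$ and maximal cones $\cone(u_1,u_3)$, $\cone(u_3,u_2)$, $\cone(u_2,u_4)$, $\cone(u_4,u_1)$. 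For the wall $\tau=\cone(u_4)$ one finds $\rho=e_1$, $\rho'=e_2$ and the dependence $e_1+e_2=-u_4$, so your candidate set is $\{e_1,e_2,-e_1-e_2\}$; but its proper subset $\{e_1,e_2\}$ is \emph{not} a cone of $\Sigma$ (it is exactly the cone destroyed by the blow-up), so this set fails the definition of a primitive collection. The actual primitive collections are $\{e_1,e_2\}$ and $\{e_1+e_2,-e_1-e_2\}$, and the wall-crossing inequality $h(e_1)+h(e_2)+h(-e_1-e_2)\geq 0$ is the \emph{sum} of the two corresponding primitive-collection inequalities, not one of them. In general, a wall class $I_{\Sigma,\tau}$ is only a nonnegative combination of primitive-relation classes, and producing such a combination for every wall is precisely the nontrivial content of Batyrev's theorem (dually: the Mori cone is generated by the primitive relations); it does not follow from the local identification you propose. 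Note also that your ``telescoping along a path of chambers'' argument addresses only the easy direction --- convexity $\Rightarrow$ primitive-collection inequalities, which is just subadditivity of a convex positively homogeneous function, with completeness of $\Sigma$ guaranteeing that $\sum_{\rho\in F}u_\rho$ lies in some cone --- and not the hard one. As written, your sketch establishes the easy implication and for the rest defers to \cite{toric}, which, again, is what the paper itself does.
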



\begin{rem}
The material in this section can be rephrased in terms of triangulations of point configurations (see \cite[Section 5]{triangulations}). Deformation cones are instances of secondary cones for the collection of vectors $\{u_\rho: \rho\in \Sigma(1)\}$. The Wall-Crossing criterion Lemma \ref{lem:wallcross} is called the \emph{local folding condition} in \cite[Theorem 2.3.20]{triangulations}. The secondary cones form a \emph{secondary fan} whose faces are in bijection with the regular subdivisions of the configuration. When the configuration is \emph{acyclic} (so it can be visualized as a point configuration), this secondary fan is complete, and it is the normal fan of the \emph{secondary polytope}. Our situation is more subtle because our vector configurations is not acyclic, so the secondary fan is not complete, and there is no secondary polytope. 
\end{rem}

\section{\textsf{Reflection groups and Coxeter complexes}}\label{sec:coxeter}

In this section we review the combinatorial aspects of finite reflection groups that we will need. We refer the reader to \cite{reflection} for proofs.

\subsection{\textsf{Root systems and Coxeter complexes}}\label{subsec:rootsystems}

From now on, we will identify $V$ with its own dual by means of a positive definite inner product $\langle\cdot,\cdot\rangle:V\times V\to \mathbb{R}$. 
Any vector $v\in V$ defines a linear automorphism $s_v$ on $V$ by reflecting across the hyperplane orthogonal to $v$; that is,
\begin{equation}\label{eq:reflex}
s_v(x) := x-\dfrac{2\langle x,v \rangle}{\langle v,v \rangle}v.
\end{equation}

\begin{defi}
A \emph{root system} $\Phi$ is a finite set of vectors in an inner product real vector space $V$ satisfying: (R0) $\textrm{span}(\Phi)=V$, (R1) for each root $\alpha \in \Phi$, the only scalar multiples of $\alpha$ that are roots are $\alpha$ and $-\alpha$, and 
(R2) for each root $\alpha \in \Phi$ we have $s_\alpha(\Phi)=\Phi$.
A root system $\Phi$ is \emph{crystallographic} if it also satisfies (R3) for each pair of roots $\alpha, \beta \in \Phi$ we have that $2\langle\alpha, \beta \rangle/\langle \alpha,\alpha \rangle$ is an integer.
\end{defi}

Each root $\alpha\in \Phi$ gives rise to a hyperplane $H_\alpha=\{x\in V: \langle\alpha,x\rangle = 0\}$. This set of hyperplanes $\HH_\Phi = \{H_\alpha: \alpha\in \Phi\}$ is called the \emph{Coxeter arrangement}. The \emph{Coxeter complex} is the associated fan $\Sigma_\Phi$, which is simplicial. We will often use these two terms interchangeably, and drop the subscript $\Phi$ when the context is clear. Let $s_\alpha \in GL(V)$ be the reflection accross hyperplane $H_\alpha$; we have
\[
s_\alpha(x) = x - 2 \frac{\langle \alpha, x\rangle}{\langle \alpha, \alpha\rangle} \alpha \qquad \textrm{ for } x \in V.
\]

\begin{defi}
Let $\Phi$ be a finite root system spanning $V$ and let $W=W_\Phi$ be the subgroup of $\textrm{GL}(V)$ generated by the reflections $s_\alpha$ for $\alpha\in \Phi$. The group $W$ is a finite group, called the \emph{Weyl group} of $\Phi$.
\end{defi}

The combinatorial structure of the Coxeter complex $\Sigma_\Phi$ is closely related to the algebraic structure of the Weyl group $W_\Phi$, as we explain in the remainder of this section. Let us fix a chamber (maximal cone) of $\Sigma_\Phi$ to be the \emph{fundamental domain} $D$; recall that it is simplicial. Then the \emph{simple roots} $\Delta=\{\alpha_1,\ldots,\alpha_d\}\subset \Phi$ are the roots whose positive halfspaces minimally cut out the fundamental domain; that is,
\[
D = \{x \in V \, : \, \langle \alpha_i, x \rangle \geq 0 \textrm{ for } 1 \leq i \leq d\}.
\]
The simple roots form a basis for $V$, and we call $d = \dim V$ the \emph{rank} of the root system $\Phi$. The \emph{positive roots} are those that are non-negative combinations of simple roots; we denote this set by $\Phi^+ \subset \Phi$. We have that $\Phi = \Phi^+ \sqcup (-\Phi^+)$.

The \emph{Cartan matrix} is the $d\times d$ matrix $A$ whose entries are 
\[
A_{ij} := 2\frac{\langle \alpha_i, \alpha_j\rangle}{\langle\alpha_i, \alpha_i \rangle} \qquad \textrm{ for } 1 \leq i, j \leq d.
\]
This is a very sparse matrix: each row or column of $A$ contains at most four nonzero entries. For crystallographic root systems, the entries of the Cartan matrix are integers.

There exist positive integers $m_{ij} = m_{ji}$ such that $A_{ij}A_{ji} = 4 \cos^2(\pi/m_{ij})$. These entries form the \emph{Coxeter matrix} of $\Phi$.
This information is more economically encoded in the \emph{Dynkin diagram} $\Gamma(\Phi)$, which has vertices $\{1, \ldots, d\}$,
and an edge labelled $m_{ij}$ between $i$ and $j$ whenever $m_{ij} >2$. Labels equal to 3 are customarily omitted.

The \emph{direct sum} of two root systems $\Phi_1$ and $\Phi_2$, spanning $V_1$ and $V_2$ respectively, is the root system $\Phi_1 \oplus \Phi_2 := \{(\alpha, 0)\in V_1\oplus V_2 : \alpha \in R_1\} \cup \{(0,\beta)\in V_1\oplus V_2 : \beta\in R_2\}$ which spans $V_1 \oplus V_2$.
An \emph{irreducible} root system is a root system that is not a non-trivial direct sum of root systems. The connected components of the Dynkin diagram $\Gamma(\Phi)$ correspond to the irreducible root systems whose direct sum is $\Phi$.

\begin{theorem}\label{thm:classification} \cite[\S2]{reflection} The irreducible root systems can be completely classified into four infinite families $A_d, B_d, C_d, D_d$, the exceptional types $E_6, E_7, E_8, F_4, G_2, H_3, H_4$ in the dimensions indicated by their subscripts, and  $I_2(m)$ for $m \geq 3$. 
\end{theorem}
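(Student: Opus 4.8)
The plan is to translate the classification into a linear-algebraic statement about the simple roots and then run a combinatorial case analysis on the Dynkin diagram $\Gamma(\Phi)$. The key observation is that the simple roots $\Delta=\{\alpha_1,\dots,\alpha_d\}$ form a basis of $V$, so their Gram matrix is positive definite; normalizing each $\alpha_i$ to a unit vector $\hat\alpha_i$ and using the standard fact that distinct simple roots meet at an obtuse angle with $\langle\hat\alpha_i,\hat\alpha_j\rangle=-\cos(\pi/m_{ij})$, this says precisely that the symmetric matrix $G$ with $G_{ii}=1$ and $G_{ij}=-\cos(\pi/m_{ij})$ for $i\neq j$ is positive definite. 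Since $G$ depends only on the integers $m_{ij}$, i.e.\ only on $\Gamma(\Phi)$, and since $\Phi$ irreducible means $\Gamma(\Phi)$ is connected, the problem reduces to classifying the connected diagrams whose matrix $G$ is positive definite. First I would set up this dictionary and record the two monotonicity principles that drive everything: (i) a principal submatrix of a positive definite matrix is again positive definite, so every induced subdiagram of an admissible diagram is admissible; and (ii) writing $G=I-C$ with $C$ a nonnegative symmetric matrix, $G$ is positive definite exactly when the Perron eigenvalue of $C$ is less than $1$, so decreasing an edge label (deleting an edge being the label $2$) only decreases the entries of $C$ and hence preserves admissibility.

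The combinatorial heart of the proof is to produce a short list of minimal non-admissible diagrams and use (i) to forbid them as subdiagrams. Concretely, I would exhibit the extended (affine) diagrams --- the cycle $\widetilde{A}_d$, the degree-four star $\widetilde{D}_4$, the diagrams $\widetilde{B},\widetilde{C},\widetilde{F},\widetilde{G}$, and the three diagrams $\widetilde{E}_6,\widetilde{E}_7,\widetilde{E}_8$ --- and verify by an explicit-kernel (or determinant) computation that each has $\det G=0$, so that $G$ is positive semidefinite with one-dimensional radical and the diagram is not admissible. Applying (i) then yields the structural constraints in sequence: excluding $\widetilde{A}$ forces $\Gamma(\Phi)$ to be a tree; excluding $\widetilde{D}_4$ forces every vertex to have degree at most $3$ and at most one vertex of degree $3$; excluding the rank-two and rank-three pieces restricts the labels (a label $\geq 4$ can occur only on a diagram with no branch vertex, a label $\geq 6$ only in rank two, and a label $5$ only in the surviving $H$-type configurations); and excluding $\widetilde{E}_6,\widetilde{E}_7,\widetilde{E}_8$ bounds the three arm lengths at a branch vertex to the triples $(1,1,k),(1,2,2),(1,2,3),(1,2,4)$. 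Assembling these constraints leaves exactly the connected admissible diagrams, namely the Coxeter diagrams of $A_d$, $B_d\,(=C_d$ as a reflection group$)$, $D_d$, $E_6,E_7,E_8$, $F_4$, $H_3,H_4$, and the dihedral $I_2(m)$.

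Finally I would refine this list of diagrams into a list of root systems. The diagram records only the angles $m_{ij}$, hence only the reflection group; a root system carries the extra data of the relative root lengths, constrained by the closure condition (R2). Adjacent simple roots on an unlabeled edge ($m_{ij}=3$) must have equal length, so the simply-laced diagrams give a single root system each ($A_d,D_d,E_6,E_7,E_8$); a label-$4$ edge and a label-$6$ edge admit two root lengths, and imposing the crystallographic condition (R3) pins their ratios to $\sqrt2$ and $\sqrt3$. This is exactly where, in rank $\geq 3$, the single diagram with a terminal label-$4$ edge bifurcates into $B_d$ and $C_d$ according to whether the end node is the long or the short root, and where $I_2(6)$ becomes $G_2$; in rank two all dihedral root systems are the $I_2(m)$, and (R3) selects $m\in\{3,4,6\}$ as the crystallographic ones, leaving $H_3,H_4$ and $I_2(m)$ for $m\notin\{3,4,6\}$ as precisely the non-crystallographic possibilities.

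I expect the main obstacle to be the bookkeeping in the second paragraph: confirming that the listed affine diagrams are exactly the right obstructions, and checking that after imposing all the forbidden-subdiagram constraints nothing besides the stated diagrams survives, requires handling the branch-vertex and high-label cases together and ruling out near-miss configurations (for example a label $4$ together with a branch, or two edges of label $\geq 4$). The borderline determinant computations --- $\widetilde{E}_8$ and the $H$-type and $I_2(5)$ pieces --- are the most delicate, since there admissibility is genuinely marginal and a single miscalculation would either admit a spurious type or exclude a real one.
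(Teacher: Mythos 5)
The paper does not actually prove this statement: it is the classical Coxeter--Dynkin classification, imported by citation from Humphreys \cite{reflection}. So your sketch can only be measured against the standard literature argument, and in outline it is that argument: reduce to classifying connected Coxeter diagrams whose matrix $G$ (with $G_{ii}=1$, $G_{ij}=-\cos(\pi/m_{ij})$) is positive definite, use heredity of positive definiteness under principal submatrices and under decreasing labels, forbid a finite list of minimal non-definite diagrams, and then refine diagrams back into root systems. Your third paragraph handles the diagram-to-root-system refinement correctly, including the point that $B_d$ and $C_d$ are distinct root systems with the same diagram and the selection of $m\in\{3,4,6\}$ by the crystallographic condition (R3).

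There is, however, a concrete gap in the second paragraph: the affine diagrams, even combined with your monotonicity principle (ii), are \emph{not} a sufficient list of obstructions. Two items are missing. First, you name only $\widetilde{D}_4$ from the $\widetilde{D}$ family; to forbid a tree with two branch vertices you need $\widetilde{D}_n$ for $n\geq 5$ (two forks joined by a path), which contains no induced $\widetilde{D}_4$, so ``at most one vertex of degree $3$'' does not follow from what you listed. Second, and more seriously, the label-$5$ configurations: the path of five vertices with a terminal label-$5$ edge (the would-be $H_5$) and the path of four vertices with an \emph{interior} label-$5$ edge contain no affine induced subdiagram, and lowering their label $5$ to $4$ produces $B_5$ and $F_4$ respectively --- both admissible --- so principle (ii) cannot exclude them either. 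These diagrams are genuinely non-definite but with $\det G<0$ rather than $\det G=0$; e.g.\ for $H_5$ the tridiagonal recurrence $D_k=D_{k-1}-c_{k-1}^2D_{k-2}$ gives $D_4>0$ but $D_5=D_4-\tfrac14 D_3<0$. They must be added to your forbidden list as separate negative-determinant computations; without them your case analysis would let the $H$-family continue past rank $4$ and would admit an interior label-$5$ edge, i.e.\ it would certify spurious ``root systems.'' This is precisely the bookkeeping you flag as delicate at the end, but as written your plan commits only to $\det G=0$ checks for affine diagrams, which cannot close these cases. Once these standard extra obstructions are included, the argument is complete and agrees with the proof in \cite{reflection}.
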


\begin{ex}\label{ex:roots}
The \emph{classical root systems} are
\begin{eqnarray*}
A_{d-1} & = & \{\pm(e_i - e_j) \, : \, 1 \leq i \neq j \leq d\} \\
B_d & = & \{\pm e_i \pm e_j \, : \, 1 \leq i \neq j \leq d\} \cup \{\pm e_i \, : \, 1 \leq i \leq d\}\\
C_d & = & \{\pm e_i \pm e_j \, : \, 1 \leq i \neq j \leq d\} \cup \{\pm 2e_i \, : \, 1 \leq i \leq d\}\\
D_d & = & \{\pm e_i \pm e_j \, : \, 1 \leq i \neq j \leq d\} 
\end{eqnarray*}
where $\{e_1, \ldots, e_d\}$ is the standard basis of $\RR^d$.
Notice that the root system $A_{d-1}$ spans the subspace $\RR^d_0 := \{\x \in \RR^d \, : \, x_1 + \cdots + x_d = 0\}$ of $\RR^d$. For a suitable choice of fundamental chamber, the simple roots of the classical root systems are
\begin{eqnarray*}
\Delta_{A_{d-1}} & = & \{e_1-e_2, \,  e_2-e_3, \ldots,  \,e_{d-1}-e_d\} \\
\Delta_{B_d} & = & \{e_1-e_2, \,e_2-e_3, \ldots,\, e_{d-1}-e_d, \,e_d\} \\
\Delta_{C_d} & = & \{e_1-e_2, \,e_2-e_3, \ldots, \,e_{d-1}-e_d,\, 2e_d\} \\
\Delta_{D_d} & = & \{e_1-e_2, \,e_2-e_3, \ldots, \,e_{d-1}-e_d, \,e_{d-1}+e_d\}
\end{eqnarray*}
Their Dynkin diagrams are the following. 

%
%
%
\begin{center}
\begin{tabular}{rl}
$A_d:$ & 
\begin{tikzpicture}
\draw[fill=black] 
(0,0) circle [radius=0.08]
(1,0) circle [radius=0.08]
(2,0) circle [radius=0.08]
(3,0) circle [radius=0.08]
(4,0) circle [radius=0.08]
;
\draw
(0,0) -- (1,0)
(2,0) -- (3,0) -- (4,0)
;
\draw[thick, dotted]
(1,0) -- (2,0)
;
\end{tikzpicture}
\\
$B_d, C_d:$ &  
\begin{tikzpicture}
\draw[fill=black] 
(0,0) circle [radius=0.08]
(1,0) circle [radius=0.08]
(2,0) circle [radius=0.08]
(3,0) circle [radius=0.08]
(4,0) circle [radius=0.08]
;
\draw
(0,0) -- (1,0) 
(2,0) -- (3,0) -- (4,0) node [midway,above] {4}
;
\draw[thick, dotted]
(1,0) -- (2,0)
;
\end{tikzpicture}
\\
$D_d:$ & 
\begin{tikzpicture}
\draw[fill=black] 
(0,0) circle [radius=0.08]
(1,0) circle [radius=0.08]
(2,0) circle [radius=0.08]
(3,0) circle [radius=0.08]
(4,0) circle [radius=0.08]
(5,0.5) circle [radius=0.08]
(5,-0.5) circle [radius=0.08]
;
\draw
(0,0) -- (1,0) 
(2,0) -- (3,0) -- (4,0) 
(4,0) -- (5, .5)
(4,0) -- (5, -.5)
;
\draw[thick, dotted]
(1,0) -- (2,0)
;
\end{tikzpicture}
\end{tabular}
\end{center}
The root system $C_3$, illustrated in Figure \ref{fig:C3}, will serve as our running example in this section.
\end{ex}

\begin{figure}[h] 
\centering
\begin{tikzpicture}
    \draw (0, 0) node[inner sep=0]
{\includegraphics[height=7cm]{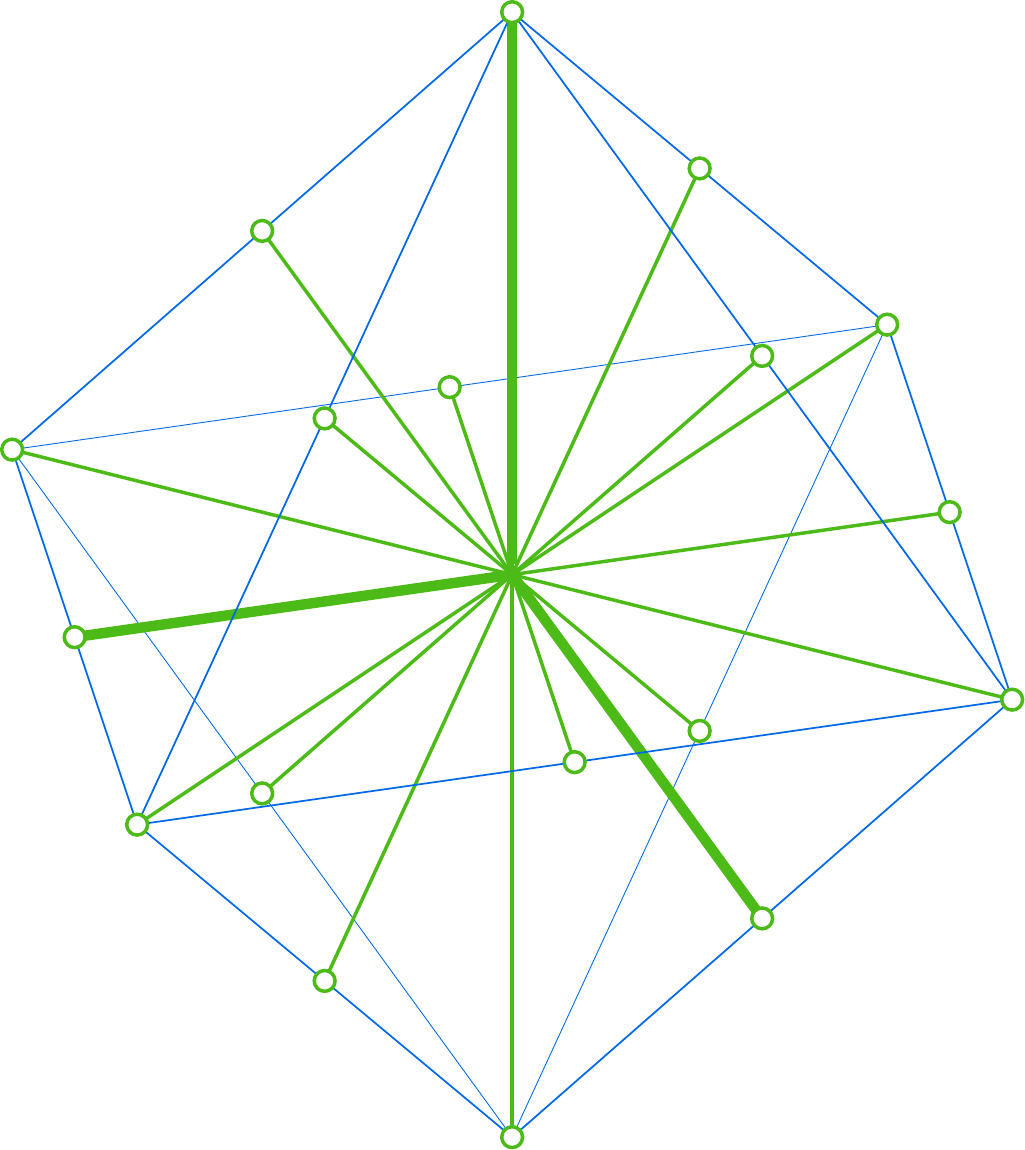}
\includegraphics[height=2cm]{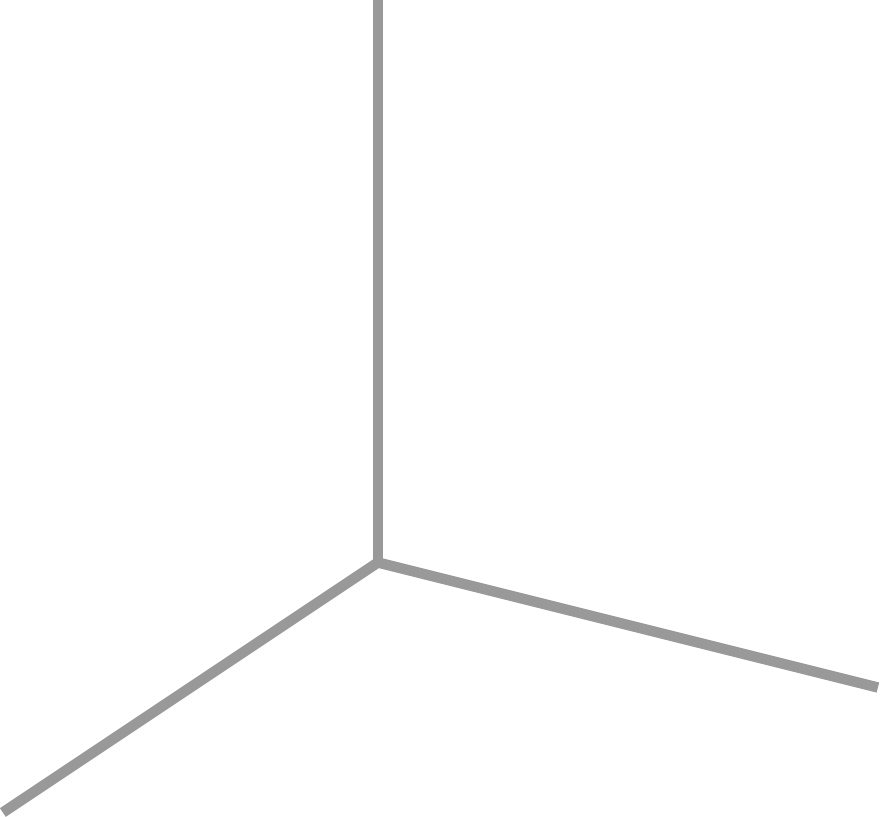} \qquad 
\includegraphics[height=7cm]{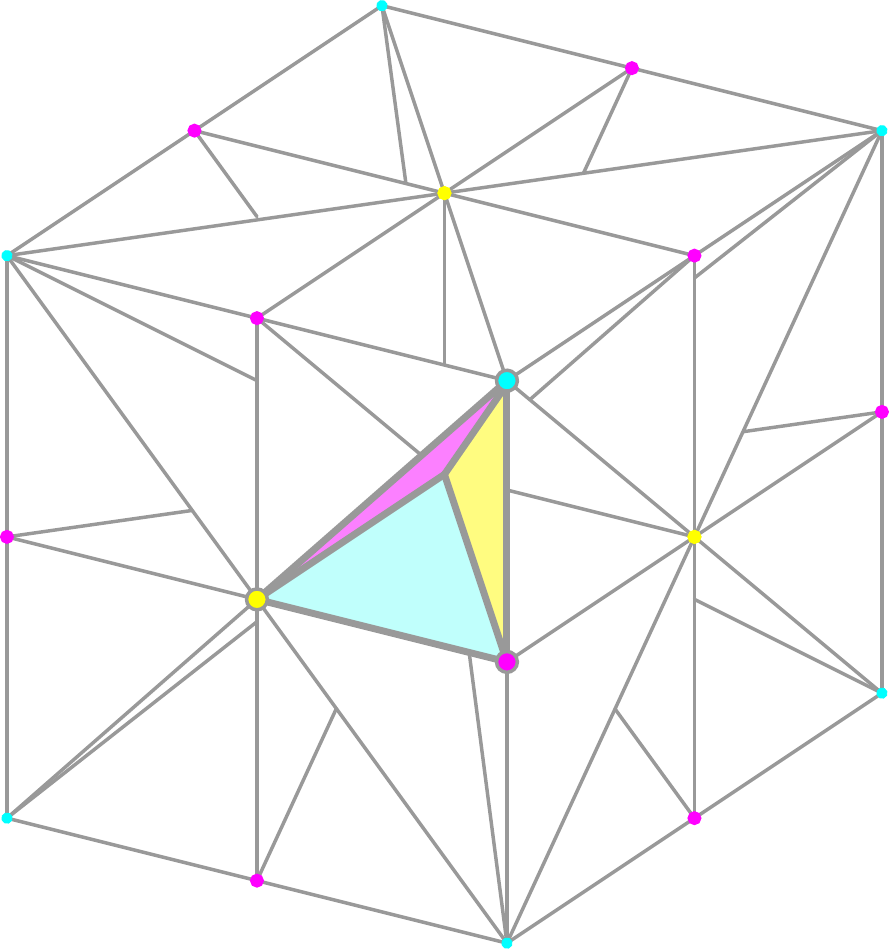}};  \qquad ;
    \draw (-7.9, -.6) node {$\alpha_1$};
    \draw (-3.2, -2.4) node {$\alpha_2$};
    \draw (-5.3, 3.4) node {$\alpha_3$};
    \draw (2.855, -1) node {$\myomega_1$};
    \draw (5.4, -1.5) node {$\myomega_2$};
    \draw (5.3, 1.1) node {$\myomega_3$};
    \draw (5.4, -0.4) node {$s_1$};
    \draw (4.1, 0.1) node {$s_2$};
    \draw (4.2, -1.3) node {$s_3$};
\end{tikzpicture}

\caption{(a) The root system $C_3$ consists of $24$ roots, which are the vertices and edge midpoints of a regular octahedron. The simple roots $\alpha_1, \alpha_2, \alpha_3$ are emphasized. (b) The Coxeter complex of $C_3$ has 48 chambers. One of them, the fundamental domain, is emphasized; its rays contain the fundamental weights $\myomega_1, \myomega_2, \myomega_3$, and its walls determine the simple reflections $s_1, s_2, s_3$. \label{fig:C3}
}

\end{figure}

\begin{defi}
If $\Phi$ is a root system on $V$, the \emph{coroot} $\alpha^\vee$ of a root $\alpha$ is an element of $V^*$ which is defined as
\[
\alpha^\vee = \frac{2}{\langle \alpha, \alpha \rangle} \alpha.
\]
after identifying $V^*\simeq V$ by the inner product $\langle \cdot, \cdot \rangle$.  The coroots form the \emph{dual root system} $\Phi^\vee$.
\end{defi}

Notice that the reflection across $\alpha_i$ can be rewritten simply as
\begin{equation}\label{eq:s_i}
s_i(x) = x -  \langle x, \alpha_i^\vee \rangle \alpha_i
\end{equation}
Also notice that the Cartan matrix can be rewritten as
\begin{equation}\label{eq:A}
A_{ij} =  \langle \alpha^\vee_i , \alpha_j \rangle.
\end{equation}
This implies that the Cartan matrix of the dual root system $\Phi^\vee$ is $A^T$.  

\begin{defi}
Let the \emph{fundamental weights} $(\myomega_1,\ldots,\myomega_d)$ form the basis of $V$ dual to the simple coroots $(\alpha_1^\vee,\ldots,\alpha_d^\vee)$; that is, $\langle \myomega_i,\alpha^\vee_j \rangle = \delta_{ij}$. Let the \emph{fundamental weight conjugates} or $\emph{rays}$ of $\Phi$ be 
\[
\R = \R_\Phi :=  W\{\myomega_1,\ldots,\myomega_d\}.
\]
Each element of $\R_\Phi$ can be expressed as $w \myomega_i$ for a unique $i$; the choice of $w$ is not unique. The rays of the Coxeter arrangement $\mathcal H_\Phi$ are exactly the positive spans of the vectors in $\R_\Phi$, explaining our terminology. 

Similarly, let the \emph{fundamental coweights} $(\myomega_1^\vee,\ldots,\myomega_d^\vee)$ form the basis of $V$ dual to the simple roots $(\alpha_1, \ldots, \alpha_d)$. Clearly $\myomega_i^\vee = \frac12 \langle \alpha_i, \alpha_i \rangle \myomega_i$. 
\end{defi}

Let $e_1, \ldots, e_d$ be an orthonormal basis for $\RR^d$. Let $e_S := \sum_{i\in S} e_i$ for $S \subseteq [d]$ and denote $\1 = e_{[d]} =  (1, \ldots, 1) \in \RR^d$. 
For $\x \in \RR^d$ let $\overline{\x}$ denote its coset representative in $\RR^d/\RR\1$.

\begin{ex}\label{ex:weights} The fundamental weights of the classical root systems are:
\begin{eqnarray*}
A_{d-1}:  &  & \{\e_1, \e_1+\e_2, \ldots, \e_1+\cdots+\e_{d-1}\} \\
B_d: &  & \{e_1, e_1+e_2, \ldots, e_1+\cdots+e_{d-1}, (e_1 + \cdots + e_d)/2\} \\
C_d: &  & \{e_1, e_1+e_2, \ldots, e_1+\cdots+e_{d-1}, 
e_1 + \cdots + e_d\} \\
D_d: &  & \{e_1, e_1+e_2, \ldots, e_1+\cdots+e_{d-2}, 
(e_1 + \cdots + e_{d-1} -  e_d)/2, 
(e_1 + \cdots + e_{d-1} + e_d)/2\}
\end{eqnarray*}
\end{ex}

In light of \eqref{eq:A}, the transition matrix between the roots and the fundamental weights is the transpose of the Cartan matrix:
\begin{equation}\label{alphatolambda}
\alpha_j = \sum_{i=1}^d A_{ij} \myomega_i \qquad \textrm{and} \qquad 
\myomega_j = \sum_{i=1}^d A^{-1}_{ij} \alpha_i \qquad \qquad \textrm{ for } 1 \leq j \leq d,
\end{equation}
and we have
\begin{equation} \label{Ainverse}
\langle \myomega^\vee_i, \myomega_j \rangle = A^{-1}_{ij}.
\end{equation}

We say $\Phi$ is \emph{simply laced} if it is of type $ADE$; that is, its Dynkin diagram has no labels greater than $3$. These root systems are self dual; there is no distinction between roots and coroots, or between weights and coweights.

\subsection{\textsf{Weyl groups, parabolic subgroups, and the geometry of the Coxeter complex}}\label{subsec:groups}

Let $\Phi$ be a finite root system spanning $V$ and let $W=W_\Phi$ be its Weyl group; recall that it is finite.
Let $\Delta = \{\alpha_1, \ldots, \alpha_d\}$ be a choice of simple roots of $\Phi$, and let $s_i = s_{\alpha_i}$ be the reflection across the hyperplane $H_{\alpha_i}$ orthogonal to $\alpha_i$ for $1 \leq i \leq d$.

\begin{prop}
The Weyl group $W$ of the root system $\Phi$ is generated by the set of simple reflections $S := \{s_1, \ldots, s_d\}$, with presentation given by the Coxeter matrix as follows:
\begin{equation}\label{eq:presentation}
W = \langle s_1, \ldots, s_d \ | \ (s_is_j)^{m_{ij}} = e \ \textrm{ for } 1\leq i,j\leq d)\rangle
\end{equation}
\end{prop}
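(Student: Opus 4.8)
The plan is to compare $W$ with the abstract group $\widetilde{W} := \langle \tilde s_1, \ldots, \tilde s_d \mid (\tilde s_i \tilde s_j)^{m_{ij}} = e\rangle$ defined by the Coxeter presentation. Since each defining relation of $\widetilde W$ will be seen to hold in $W$, there is a homomorphism $\phi\colon \widetilde W \to W$, $\tilde s_i \mapsto s_i$, and the proposition is exactly the assertion that $\phi$ is an isomorphism. I would establish this in three steps: (i) the relations $(s_is_j)^{m_{ij}}=e$ genuinely hold in $W$, so $\phi$ is well defined; (ii) $\phi$ is surjective, i.e.\ $S=\{s_1,\dots,s_d\}$ generates $W$; and (iii) $\phi$ is injective, which is the crux.

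For (i), I would work in the plane $\Pi=\operatorname{span}(\alpha_i,\alpha_j)$. On $\Pi^\perp$ both $s_i$ and $s_j$ act as the identity, and on $\Pi$ they are the reflections in the lines $H_{\alpha_i}\cap\Pi$ and $H_{\alpha_j}\cap\Pi$. The relation $A_{ij}A_{ji}=4\cos^2(\pi/m_{ij})$ shows these two lines meet at dihedral angle $\pi/m_{ij}$, so their composite $s_is_j$ is a planar rotation by $2\pi/m_{ij}$ and hence has order exactly $m_{ij}$. This gives $(s_is_j)^{m_{ij}}=e$ in $W$, so $\phi$ is well defined. For (ii), let $W'=\langle S\rangle\le W$. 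The key geometric input is that $W'$ acts transitively on the chambers of the Coxeter complex $\Sigma_\Phi$: any chamber can be joined to the fundamental domain $D$ by a gallery crossing one wall at a time, and crossing the wall of a chamber $wD$ corresponds to right-multiplication by a simple reflection, so induction on the number of separating hyperplanes carries any chamber to $D$ by an element of $W'$. Transitivity implies that every root is $W'$-conjugate to a simple root (map a chamber bordering $H_\alpha$ to $D$, carrying $H_\alpha$ to a wall of $D$), whence every reflection $s_\alpha=w^{-1}s_iw$ lies in $W'$. As the $s_\alpha$ generate $W$ by definition, $W'=W$ and $\phi$ is onto.

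Step (iii) is the main obstacle, and it requires the combinatorics of the length function $\ell(w)$, the minimal number of simple reflections expressing $w$. The decisive lemma is that each $s_i$ sends $\alpha_i\mapsto-\alpha_i$ while permuting $\Phi^+\setminus\{\alpha_i\}$; iterating it yields the geometric identity $\ell(w)=\#\{\alpha\in\Phi^+ : w\alpha\in-\Phi^+\}$, the number of hyperplanes separating $D$ from $wD$. From this I would derive the \emph{Exchange Condition}: if $w=s_{i_1}\cdots s_{i_k}$ is reduced and $\ell(s_i w)\le\ell(w)$, then $s_iw=s_{i_1}\cdots\widehat{s_{i_j}}\cdots s_{i_k}$ for some $j$. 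The Exchange Condition gives the Deletion Condition (any non-reduced word contains two deletable letters) and, via Tits' solution of the word problem, the statement that any two reduced words for the same element of $W$ are connected by braid moves $\underbrace{s_is_j\cdots}_{m_{ij}}=\underbrace{s_js_i\cdots}_{m_{ij}}$.

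To conclude injectivity, take any word representing an element of $\ker\phi$. Using $\tilde s_i^2=e$ and braid moves — all consequences of the defining relations of $\widetilde W$ — it reduces to a reduced word, which maps to $e\in W$ and must therefore be empty; hence the original word is trivial already in $\widetilde W$. Thus $\phi$ is injective and the presentation \eqref{eq:presentation} holds. I expect the bookkeeping in the Exchange Condition — tracking which positive root is flipped at each step and matching it to the deleted letter — to be the most delicate part; the remaining arguments are formal once the identity $\ell(w)=\#\{\alpha\in\Phi^+:w\alpha\in-\Phi^+\}$ is in hand.
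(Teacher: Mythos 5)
Your proposal is correct, but note that the paper does not prove this proposition at all: it appears in the review Section \ref{sec:coxeter}, where the authors explicitly defer all proofs to the cited reference \cite{reflection} (Humphreys' \emph{Reflection groups and Coxeter groups}). Your three-step plan --- (i) verifying the relations via the dihedral-angle computation in $\operatorname{span}(\alpha_i,\alpha_j)$, (ii) generation by simple reflections via gallery-transitivity on chambers and conjugacy of every reflection to a simple one, and (iii) faithfulness of the presentation via the root-flipping lemma, the length formula $\ell(w)=\#\{\alpha\in\Phi^+ : w\alpha\in-\Phi^+\}$, the Exchange Condition, and the Matsumoto--Tits word-reduction argument --- is precisely the classical proof laid out in that reference (\S 1.4--1.9), so you have reconstructed the intended argument rather than diverged from it. The only caveat is that step (iii) leans on the Exchange-Condition-implies-presentation theorem (Tits/Matsumoto) as a cited black box rather than proving it, but you correctly identify it as the crux and the surrounding logic (reduce any word in $\ker\phi$ to the empty word using only relations valid in $\widetilde W$) is sound.
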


\begin{ex}\label{ex:Weyl}
The Weyl groups of the classical root systems are:
\begin{eqnarray*}
W_{A_{d-1}} &=& \{\textrm{permutations of } [d]\} \\
W_{B_d} = W_{C_d} &=& \{\textrm{signed permutations of } [d]\} \\
W_{D_d} &=& \{\textrm{evenly signed permutations of } [d]\}. 
\end{eqnarray*}
As matrix groups, $W_{A_{d-1}}$ is the set of $d \times d$ permutation matrices, 
$W_{B_d}=W_{C_d}$ is the set of $d \times d$ ``generalized permutation matrices" whose non-zero entries are $1$ or $-1$, and $W_{D_d}$ is the subgroup of $W_{B_d}$ whose matrices involve an even number of $-1$s.
\end{ex}

The action of $W$ on $V$ induces an action on the Coxeter complex $\Sigma_\Phi$. Every face of $\Sigma_\Phi$ is $W$-conjugate to a unique face of the fundamental domain.
This action behaves especially well on the top-dimensional faces:

\begin{prop}
The Weyl group $W$ acts regularly on the set $\Sigma_\Phi(d)$ of chambers of the Coxeter arrangement; that is, for any two chambers $\sigma$ and $\sigma'$ there is a unique element $w \in W$ such that $w \cdot \sigma = \sigma'$. In particular, the chambers of the Coxeter arrangement are in bijection with $W$.
\end{prop}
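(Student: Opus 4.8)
The plan is to establish the two halves of regularity separately: \emph{transitivity} of the $W$-action on chambers, and \emph{triviality of the stabilizer} of a single chamber. Throughout I take $D$ to be the fixed fundamental domain, and I use freely that $W = \langle s_1, \ldots, s_d\rangle$ and that every $w \in W$ permutes the Coxeter arrangement $\HH_\Phi$ (since $s_\alpha(\Phi)=\Phi$ for each root by (R2)), and hence permutes the chambers of $\Sigma_\Phi$.

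For transitivity, fix an arbitrary chamber $\sigma$ and, for each $w\in W$, let $\mathrm{sep}(wD,\sigma)$ be the number of hyperplanes in $\HH_\Phi$ that separate $wD$ from $\sigma$, i.e.\ that have $wD$ and $\sigma$ on strictly opposite sides. Choose $w$ minimizing this quantity; I claim the minimum is $0$, so that $wD=\sigma$. If not, a generic segment from an interior point of $wD$ to an interior point of $\sigma$ first exits $wD$ across some wall $H$, which is therefore a wall of $wD$ that separates $wD$ from $\sigma$. The reflection $s_H$ lies in $W$ (it equals $w s_i w^{-1}$, where $H = w H_{\alpha_i}$), so $s_H\cdot(wD) = (w s_i)D$ is again a chamber in the $W$-orbit of $D$. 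The key computation is that reflecting a chamber across one of its own walls changes the separation count from $\sigma$ by exactly one: the chambers $wD$ and $(ws_i)D$ are adjacent across $H$, hence lie on the same side of every hyperplane except $H$, so $H$ is the unique hyperplane whose separating status flips. Since $H$ was separating $wD$ from $\sigma$, we obtain $\mathrm{sep}((ws_i)D,\sigma)=\mathrm{sep}(wD,\sigma)-1$, contradicting minimality. Hence $wD=\sigma$, which proves transitivity (and, incidentally, that every chamber has the form $wD$ for some product $w$ of simple reflections).

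For the stabilizer, I must show that $wD=D$ forces $w=e$. Since $w$ is orthogonal, $wD = \{x : \langle w\alpha_i, x\rangle \geq 0 \text{ for all } i\}$, so $\{w\alpha_1,\ldots,w\alpha_d\}$ is again the set of inward facet normals of $D$; as these normals are precisely the simple roots and (R1) forbids nontrivial positive rescalings within $\Phi$, the element $w$ permutes the simple roots. It remains to rule out a nontrivial such $w$. The cleanest route is the fundamental-domain property of $D$: every $W$-orbit in $V$ meets $D$ in exactly one point. Granting this, pick $x_0$ in the interior of $D$; then $wx_0$ also lies in the interior of $D$ and is $W$-conjugate to $x_0$, so $wx_0=x_0$. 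But an interior point of $D$ lies on no reflecting hyperplane, and the stabilizer of a point is generated by the reflections $s_\alpha$ whose hyperplane contains it; hence the stabilizer of $x_0$ is trivial and $w=e$. Combining the two parts, for any chambers $\sigma,\sigma'$ there is $w$ with $w\sigma=\sigma'$, and it is unique because $w_1\sigma=w_2\sigma$ gives $w_2^{-1}w_1\sigma=\sigma$, whence $w_2^{-1}w_1=e$.

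I expect the uniqueness half to be the main obstacle, and within it the fundamental-domain property; this is exactly the point that rules out a nontrivial element of $W$ realizing a Dynkin-diagram symmetry by permuting the simple roots while fixing $D$, and everything else is bookkeeping about separation counts. If one prefers to avoid invoking the fundamental-domain theorem as a black box, the same conclusion follows from developing the length function $\ell$ and the identity $\ell(w) = \#\{\text{hyperplanes separating } D \text{ from } wD\}$, whence $wD=D$ gives $\ell(w)=0$ and therefore $w=e$.
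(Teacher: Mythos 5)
Your proof is correct, but there is nothing in the paper to compare it against: this proposition sits in the review Section \ref{sec:coxeter}, where the paper explicitly defers all proofs to \cite{reflection}. On its own merits, your transitivity half is complete and elementary: the separation-count argument (reflect $wD$ across one of its own walls that separates it from $\sigma$, and the count drops by exactly one) is the standard gallery argument, and the supporting claims — the walls of $wD$ are the $wH_{\alpha_i}$, adjacent chambers are separated only by their common wall, and $\mathrm{sep}=0$ forces equality of chambers — are all justified or immediate.

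The uniqueness half is where the genuine content lies, and there you invoke the fundamental-domain property as a black box. Be aware that in the standard reference this property is part of the very same theorem as simple transitivity (Humphreys, Theorem 1.12) and is proved by the same induction, so quoting it is legitimate as a citation but circular as a self-contained development — a risk you correctly flag. A cleaner finish uses only your own intermediate step together with Theorem \ref{thm:isotropy}, which the paper states as an independently imported fact: you showed that any $w$ with $wD=D$ permutes the simple roots; since $w$ is orthogonal it then permutes the coroots, hence their dual basis, the fundamental weights, so $w$ fixes $\rho = \myomega_1 + \cdots + \myomega_d$, an interior point of $D$; no reflecting hyperplane contains $\rho$, so Theorem \ref{thm:isotropy} forces $w=e$. (Even simpler, skipping the root-permutation step: if $wD=D$, average an interior point of $D$ over the finite cyclic group $\langle w \rangle$ to produce an interior fixed point, then apply Theorem \ref{thm:isotropy}.) Either patch removes the dependence on the fundamental-domain theorem while keeping your structure intact; your fallback via the length function also works but requires developing more machinery than the statement needs.
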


The previous proposition implies that a different choice $wD$ of a fundamental domain  (where $w\in W$) gives rise to a new set of simple roots $w \Delta$ that is linearly isomorphic to the original set $\Delta$ of simple roots, since $W$ acts by isometries. It follows that the presentation for the Weyl group in \eqref{eq:presentation} and the Cartan matrix $A$ are independent of the choice of fundamental domain $D$.

\medskip

The lower dimensional faces of $\Sigma_\Phi$ correspond to certain subgroups of $W$ and their cosets. The \emph{parabolic subgroups} of $W$ are the subgroups
\[
W_I := \langle s_\alpha: \alpha\in I\rangle \subseteq W \qquad \textrm{ for each } I \subseteq \Delta.
\]
They are in bijection with the faces of the fundamental domain, where $W_I$ is mapped to the face
\[
C_I :=  \{x\in D: \langle x,\alpha \rangle = 0 \textrm{ for all }\alpha\in I, \langle x,\alpha \rangle \geq 0 \textrm{ for all }\alpha\in\Delta\backslash I\}  
\]
The \emph{parabolic cosets} are the cosets of parabolic subgroups.

\begin{prop}\label{prop:stab}
The faces of the Coxeter complex are in bijection with the parabolic cosets of $W$, where the face $F$ is labeled with the parabolic coset 
$\{w \, : \, F \subseteq wD\}$.  
More explicitly, the face $C_I$ of the fundamental domain is labeled with the parabolic subgroup $W_I$, and its $W$-conjugate $vC_I$ is labeled with the coset $vW_I$ for $v \in W$.
\end{prop}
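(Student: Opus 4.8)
The plan is to reduce everything to the fundamental faces $C_I$ and then propagate by $W$-equivariance, relying on two standard properties of the fundamental domain (see \cite{reflection}): that $D$ is a \emph{strict} fundamental domain, meeting each $W$-orbit in exactly one point, and that the pointwise stabilizer $W_x$ of a point $x\in D$ is the parabolic subgroup $W_{I(x)}$ generated by the simple reflections $s_i$ with $\langle \alpha_i,x\rangle=0$. Since (as recorded just before Proposition~\ref{prop:stab}) every face of $\Sigma_\Phi$ is $W$-conjugate to a unique fundamental face $C_I$, it suffices to understand the label set $\{w : F\subseteq wD\}$ for $F=C_I$ and then transport it by the $W$-action.

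First I would compute $\{w : C_I\subseteq wD\}=W_I$. For the inclusion $\supseteq$, note that each generator $s_i$ with $i\in I$ fixes the hyperplane $H_{\alpha_i}\supseteq C_I$ pointwise, since $\langle x,\alpha_i\rangle=0$ for $x\in C_I$; hence $W_I$ fixes $C_I$ pointwise, and for $w\in W_I$ we get $C_I=wC_I\subseteq wD$ because $C_I\subseteq D$. For the inclusion $\subseteq$, pick $x$ in the relative interior of $C_I$, so that $I(x)=I$ and $W_x=W_I$. If $C_I\subseteq wD$ then $x\in wD$, so $w^{-1}x\in D$; as $x$ and $w^{-1}x$ lie in the same $W$-orbit and both in $D$, the strict-fundamental-domain property forces $w^{-1}x=x$, whence $w^{-1}\in W_x$ and so $w\in W_x=W_I$.

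Next, for an arbitrary face $F=vC_I$, equivariance of the $W$-action on chambers gives $\{w : vC_I\subseteq wD\}=v\{w' : C_I\subseteq w'D\}=vW_I$, establishing the explicit description. This label is a single left coset of a parabolic subgroup, so in particular it is well-defined and independent of the chosen representative $v$: if $uC_I=C_I$ setwise then $u$ maps $\mathrm{relint}(C_I)$ to itself, and a relative-interior point together with its image both lie in $D$, so the strict-fundamental-domain property again forces them equal, showing that the setwise stabilizer of $C_I$ coincides with the pointwise stabilizer $W_I$; hence $vW_I=v'W_I$ exactly when $vC_I=v'C_I$.

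Finally I would verify that $F\mapsto\{w : F\subseteq wD\}$ is a bijection onto the parabolic cosets. Surjectivity is immediate, since every parabolic coset has the form $vW_I$ and is the label of the face $vC_I$. For injectivity, the coset $vW_I$ determines the subgroup $W_I$, hence the index set $I$ (using the standard fact $W_I\cap S=\{s_i : i\in I\}$), hence the fundamental face $C_I$, and then $vC_I=F$ since $W_I$ fixes $C_I$; thus the face is recovered from its label. The main obstacle is the $\subseteq$ direction of the computation $\{w:C_I\subseteq wD\}=W_I$ together with the setwise-stabilizer identity: both rest on the fact that $D$ meets each orbit in exactly one point, which is the one genuinely nontrivial input and which I would cite from \cite{reflection} rather than reprove, the remaining steps being formal consequences and equivariance.
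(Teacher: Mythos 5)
Your proof is correct. The paper itself gives no proof of Proposition \ref{prop:stab}: it is stated in the preliminaries of Section \ref{sec:coxeter}, which defer all proofs to \cite{reflection}, and your argument is precisely the standard one from that source --- the two inputs you isolate (that $D$ is a strict fundamental domain and that pointwise stabilizers of points of $D$ are the parabolic subgroups) are exactly the content of \cite[Theorem 1.12]{reflection}, which the paper records as Theorem \ref{thm:isotropy}, and your remaining steps (equivariance, the identification of the setwise with the pointwise stabilizer of $C_I$, and injectivity via $W_I \cap S = \{s_i : i \in I\}$) are all sound.
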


Two special cases, stated in the following corollaries, are especially important to us.

\begin{cor}
The walls of the Coxeter complex are labeled by the pairs $\{w, ws_i\} = w W_{\{i\}}$ for $w \in W$ and $s_i \in S$. The wall labeled $\{w, ws_i\}$ separates the chambers labeled $w$ and $ws_i$. This correspondence is bijective, up to the observation that $w W_{\{i\}} = ws_i W_{\{i\}}$.
\end{cor}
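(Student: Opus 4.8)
The plan is to derive everything directly from Proposition~\ref{prop:stab}, which already puts the faces of $\Sigma_\Phi$ in bijection with the parabolic cosets $vW_I$ (for $v\in W$, $I\subseteq\Delta$), and then to single out those cosets that correspond to walls.

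First I would identify the walls among all faces by a dimension count. The face $C_I$ is cut out inside the fundamental domain $D$ by the equalities $\langle x,\alpha\rangle=0$ for $\alpha\in I$ together with the inequalities $\langle x,\alpha\rangle\geq 0$ for $\alpha\in\Delta\setminus I$. Since the simple roots form a basis of $V$, the equalities are $|I|$ linearly independent conditions, so $\dim C_I = d-|I|$. Hence $C_I$ is a wall (a face of codimension $1$) exactly when $|I|=1$, i.e. $I=\{\alpha_i\}$ for a single index $i\in[d]$. For such $I$ the parabolic subgroup is $W_{\{i\}}=\langle s_i\rangle=\{e,s_i\}$, so the corresponding cosets are $wW_{\{i\}}=\{w,ws_i\}$, which is exactly the labeling claimed.

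Next I would establish the separation property. The fundamental wall $C_{\{i\}}$ lies on the reflecting hyperplane $H_{\alpha_i}$, and the reflection $s_i=s_{\alpha_i}$ fixes $H_{\alpha_i}$ pointwise while interchanging its two sides; consequently $D$ and $s_iD$ are precisely the two chambers sharing $C_{\{i\}}$ as a common wall. Applying the isometry $w\in W$, which permutes the faces of $\Sigma_\Phi$, the wall $wC_{\{i\}}$ is the common wall of the chambers $wD$ and $ws_iD$, labeled $w$ and $ws_i$ respectively under the chamber-labeling convention. Thus the wall labeled $\{w,ws_i\}$ separates the chambers labeled $w$ and $ws_i$.

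Finally I would address bijectivity. Proposition~\ref{prop:stab} already gives a bijection between walls and cosets of the form $wW_{\{i\}}$, so the assignment sending each such coset to its wall is a bijection. The only subtlety is that the pairs $(w,s_i)$ over-count: since $s_i\in W_{\{i\}}$ we have $wW_{\{i\}}=ws_iW_{\{i\}}$, so the two pairs $(w,s_i)$ and $(ws_i,s_i)$ name the same coset, hence the same wall, and this is the only redundancy. This is precisely the ``up to the observation that $wW_{\{i\}}=ws_iW_{\{i\}}$'' caveat in the statement. I expect the only delicate point to be the separation property --- specifically, confirming that the two chambers adjacent to $wC_{\{i\}}$ are exactly $wD$ and $ws_iD$, which rests on the fact that chambers adjacent across a wall in $H_{\alpha_i}$ differ by the reflection $s_i$ --- while the remaining steps are routine bookkeeping on top of Proposition~\ref{prop:stab}.
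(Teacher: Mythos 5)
Your proof is correct and follows exactly the route the paper intends: the corollary is stated as an immediate consequence of Proposition~\ref{prop:stab} (faces $\leftrightarrow$ parabolic cosets), and your dimension count isolating $|I|=1$, the identification of $D$ and $s_iD$ as the two chambers sharing $C_{\{i\}}$, and the coset redundancy $wW_{\{i\}}=ws_iW_{\{i\}}$ are precisely the routine details the paper leaves implicit.
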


\begin{cor}
The $d$ rays of the fundamental domain are spanned by the fundamental weights $\{\myomega_1, \ldots, \myomega_d\}$, and the rays of the Coxeter complex are spanned by the fundamental weight conjugates $\R = W\{\myomega_1, \ldots, \myomega_d\}$. These correspondences are bijective.
\end{cor}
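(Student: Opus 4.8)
The plan is to deduce everything from Proposition~\ref{prop:stab} together with the description of the fundamental weights as the basis dual to the simple coroots. First I would locate the rays of $\Sigma_\Phi$ among the faces $C_I$ of the fundamental domain. Since $C_I$ is cut out from $D$ by imposing $\langle x,\alpha\rangle = 0$ for $\alpha\in I$, and the functionals $\langle\,\cdot\,,\alpha\rangle$ for $\alpha\in\Delta$ are linearly independent, a dimension count gives $\dim C_I = d-|I|$; hence the rays of $D$ are exactly the faces $C_{\Delta\setminus\{\alpha_i\}}$ for $1\leq i\leq d$. Writing $x=\sum_k c_k\myomega_k$ and using $\langle\myomega_k,\alpha_j^\vee\rangle=\delta_{kj}$ (and that $\alpha_j$ is a positive multiple of $\alpha_j^\vee$), the equations $\langle x,\alpha_j\rangle=0$ for $j\neq i$ force $c_j=0$ for $j\neq i$, while $\langle x,\alpha_i\rangle\geq 0$ forces $c_i\geq 0$. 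Thus $C_{\Delta\setminus\{\alpha_i\}}=\RR_{\geq 0}\myomega_i$, which proves the first correspondence; it is bijective because the $\myomega_i$ form a basis and so span $d$ distinct rays.

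For the second correspondence, I would invoke that every face of $\Sigma_\Phi$ is $W$-conjugate to a unique face of $D$. Applied to rays, this says every ray has the form $w\cdot\RR_{\geq 0}\myomega_i=\RR_{\geq 0}(w\myomega_i)$ for some $w\in W$ and a unique $i$, so the rays of the Coxeter complex are precisely the cones spanned by the elements of $\R=W\{\myomega_1,\dots,\myomega_d\}$. This already gives surjectivity of the map $\R\to\{\text{rays}\}$, $v\mapsto\RR_{\geq 0}v$, so it remains to establish injectivity.

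The crux is injectivity, and the key tool is that $W$ acts by isometries. First, the ray $\RR_{\geq 0}(w\myomega_i)$ determines $i$: a ray is $W$-conjugate to $\RR_{\geq 0}\myomega_k$ for exactly one $k$, and that $k$ equals the unique index $i$ appearing in the expression $w\myomega_i$ (unique by the definition of $\R$). So it suffices to show that for fixed $i$, distinct elements of $W\myomega_i$ span distinct rays. I would first pin down the stabilizer of the ray $C_{\Delta\setminus\{\alpha_i\}}$: by Proposition~\ref{prop:stab} this face carries the label $W_{\Delta\setminus\{\alpha_i\}}$ and its translate $wC_{\Delta\setminus\{\alpha_i\}}$ carries the label $wW_{\Delta\setminus\{\alpha_i\}}$, so $w$ fixes the ray setwise iff $w\in W_{\Delta\setminus\{\alpha_i\}}$. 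Because $W$ preserves norms, any such $w$ sends $\myomega_i$ to a positive multiple of $\myomega_i$ of equal length, hence fixes $\myomega_i$; thus $\mathrm{Stab}_W(\myomega_i)=W_{\Delta\setminus\{\alpha_i\}}$. Consequently $w\myomega_i=w'\myomega_i$ iff $w^{-1}w'\in W_{\Delta\setminus\{\alpha_i\}}$ iff $wW_{\Delta\setminus\{\alpha_i\}}=w'W_{\Delta\setminus\{\alpha_i\}}$, which by Proposition~\ref{prop:stab} holds iff $w$ and $w'$ span the same ray. This chain of equivalences is exactly the injectivity (and well-definedness) needed, completing the bijection.

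I expect the stabilizer computation to be the only genuine subtlety: one must be careful that the label in Proposition~\ref{prop:stab} records only the setwise stabilizer of the ray, and that upgrading this to the pointwise stabilizer of the vector $\myomega_i$ genuinely uses that reflections act by isometries, so that lengths along a fixed ray cannot be rescaled.
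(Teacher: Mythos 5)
Your proof is correct and follows essentially the route the paper intends: the corollary is stated as an immediate specialization of Proposition~\ref{prop:stab} to one-dimensional faces, which is exactly what you carry out, including the identification $C_{\Delta\setminus\{\alpha_i\}}=\RR_{\geq 0}\myomega_i$ and the coset/ray bijection. Your only detour is deriving $\mathrm{Stab}_W(\myomega_i)=W_{\Delta\setminus\{\alpha_i\}}$ from the setwise stabilizer plus the isometry property, whereas one can also quote Theorem~\ref{thm:isotropy} (isotropy groups of faces) directly; both are fine.
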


Note that the faces of the fundamental chamber are given by
\begin{eqnarray*}
C_I &=& D \medcap \Big( \, \bigcap_{i\in I} H_{\alpha_i}\Big) \\
&=& \operatorname{cone}(\myomega_i \ | \ i\notin I) \qquad \textrm{ for } I\subseteq \Delta.
\end{eqnarray*}
The parabolic subgroups arise as isotropy groups for the action of $W$ on $V$ \cite[Theorem 1.12, Proposition 1.15]{reflection}:
\begin{theorem}\label{thm:isotropy}
The isotropy group of the face $C_I$ is precisely the parabolic subgroup $W_I$. More generally, if $V'$ is any subset of $V$ then the subgroup of $W$ fixing $V'$ pointwise is generated by those reflections $s_\alpha$ whose normal hyperplane $H_\alpha$ contains $V'$.
\end{theorem}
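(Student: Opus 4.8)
The whole statement rests on a single lemma about the stabilizer of a \emph{point}, from which both assertions follow by specialization. So the plan is: (i) prove that for every $\lambda\in V$ the isotropy group $W_\lambda=\{w\in W:w\lambda=\lambda\}$ is generated by the reflections $s_\alpha$ with $\lambda\in H_\alpha$; (ii) read off the stabilizer of $C_I$ by taking $\lambda$ in the relative interior of $C_I$; and (iii) deduce the general case by reducing an arbitrary $V'$ to a single sufficiently generic point of $\operatorname{span}(V')$. Since $\overline D$ is a fundamental domain, every point is $W$-conjugate to some $\lambda\in\overline D$, and conjugation by $v$ sends $s_\alpha$ to $s_{v\alpha}$ and $H_\alpha$ to $H_{v\alpha}$; so (i) reduces to the case $\lambda\in\overline D$, where I claim $W_\lambda=\langle s_i : \langle\lambda,\alpha_i\rangle=0\rangle$.

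I would prove this claim by induction on the Coxeter length $\ell(w)$ of $w\in W_\lambda$ (the minimal number of simple reflections in a word for $w$), the base case $\ell(w)=0$ being trivial. If $\ell(w)>0$, a standard fact about finite reflection groups \cite{reflection} provides a simple reflection $s_i$ with $\ell(ws_i)<\ell(w)$, equivalently $w\alpha_i\in\Phi^-$; write $w\alpha_i=-\beta$ with $\beta\in\Phi^+$. Using that $W$ acts by isometries and $w\lambda=\lambda$,
\[
\langle\lambda,\alpha_i\rangle=\langle w\lambda,w\alpha_i\rangle=\langle\lambda,-\beta\rangle=-\langle\lambda,\beta\rangle.
\]
Now $\lambda\in\overline D$ gives $\langle\lambda,\alpha_i\rangle\ge 0$, while $\beta\in\Phi^+$ and $\lambda\in\overline D$ give $\langle\lambda,\beta\rangle\ge 0$; hence both vanish and $s_i$ fixes $\lambda$. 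Then $ws_i\in W_\lambda$ has smaller length, so by induction it is a product of simple reflections fixing $\lambda$, and therefore so is $w=(ws_i)s_i$. The one genuinely delicate point of the argument is exactly here: one must guarantee that the length-reducing reflection actually lies in $W_\lambda$, and the isometry computation above is what makes the descent stay inside the stabilizer.

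For the first assertion, take $\lambda$ in the relative interior of $C_I=\operatorname{cone}(\myomega_i : \alpha_i\notin I)$. Then $\langle\lambda,\alpha\rangle=0$ holds for a simple root $\alpha$ precisely when $\alpha\in I$, so by (i) we get $W_\lambda=\langle s_\alpha:\alpha\in I\rangle=W_I$. Every $w$ fixing $C_I$ pointwise fixes $\lambda$, so $\operatorname{Stab}(C_I)\subseteq W_\lambda=W_I$; conversely each $s_\alpha$ with $\alpha\in I$ fixes the hyperplane $H_\alpha\supseteq C_I$ pointwise, so $W_I\subseteq\operatorname{Stab}(C_I)$. Hence $\operatorname{Stab}(C_I)=W_I$.

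For the general statement, first note that $w$ fixes $V'$ pointwise if and only if it fixes $L:=\operatorname{span}(V')$ pointwise, by linearity of $w$; likewise $H_\alpha\supseteq V'$ if and only if $\alpha\perp L$. So I may replace $V'$ by $L$. The inclusion $\langle s_\alpha:\alpha\perp L\rangle\subseteq\operatorname{Stab}(L)$ is immediate. For the reverse, I would produce a single point $x\in L$ lying on $H_\alpha$ for exactly those $\alpha$ with $\alpha\perp L$: for each of the finitely many roots $\alpha\not\perp L$ the set $H_\alpha\cap L$ is a proper subspace of $L$, and a finite union of proper subspaces cannot exhaust $L$ over $\RR$, so such $x$ exists. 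Applying (i) to $x$, the group $W_x$ is generated by the reflections $s_\alpha$ with $x\in H_\alpha$, namely exactly those with $\alpha\perp L$; each of these fixes $L$, so $W_x\subseteq\operatorname{Stab}(L)$, while trivially $\operatorname{Stab}(L)\subseteq W_x$ since $x\in L$. Thus $\operatorname{Stab}(V')=\operatorname{Stab}(L)=W_x=\langle s_\alpha:H_\alpha\supseteq V'\rangle$, as desired.
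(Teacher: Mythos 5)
Your argument is correct, but there is nothing in the paper to compare it against line by line: the paper does not prove Theorem \ref{thm:isotropy} at all, it quotes it with the citation \cite[Theorem 1.12, Proposition 1.15]{reflection} standing in for a proof. What you wrote is a self-contained reconstruction of that cited material. Your part (i) --- reduce to $\lambda\in D$ by conjugacy (using $vs_\alpha v^{-1}=s_{v\alpha}$ and $vH_\alpha = H_{v\alpha}$), then induct on $\ell(w)$ via the standard equivalence $\ell(ws_i)<\ell(w)\Leftrightarrow w\alpha_i\in\Phi^-$, with the isometry computation forcing $\langle\lambda,\alpha_i\rangle=0$ so that the descent stays inside $W_\lambda$ --- is precisely the classical proof of the point-stabilizer theorem in the cited reference, and you correctly flag the one delicate step. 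Where you genuinely diverge is part (iii): the usual textbook route deduces the statement for an arbitrary subset $V'$ by a further induction (working inside the stabilizer of one point, which is again a reflection group), whereas you replace $V'$ by its span $L$ and pick a single point $x\in L$ avoiding the finitely many proper subspaces $H_\alpha\cap L$ with $\alpha\not\perp L$; since a finite union of proper subspaces cannot cover a real vector space, this reduces the general statement to the point case in one stroke, which is arguably cleaner. One small caveat: your step (ii) identifies the \emph{pointwise} stabilizer of $C_I$ with $W_I$. If one reads ``isotropy group of the face'' as the setwise stabilizer, you need one extra line --- if $wC_I=C_I$ and $\lambda$ is in the relative interior, then $\lambda$ and $w\lambda$ are two points of the same $W$-orbit lying in $D$, hence equal by the same standard facts, so $w\in W_\lambda=W_I$ --- but the pointwise version is what the paper actually uses (e.g.\ in the proof of Theorem \ref{thm:count}), so this is a matter of phrasing rather than a gap.
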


Let $[\pm d] = \{1, 2, \ldots, d, -1, -2, \ldots, -d\}$. Say that a subset $S$ of $[\pm d]$ is \emph{admissible} if it is nonempty and $j \in S$ implies that $-j \notin S$. In this case, write $S  \subseteq [\pm d]$, and let $e_S = e_A - e_B$ where $A = \{a \in [d] \, : \, a \in S\}$ and $B = \{b \in [d] \, : \, -b \in S\}$.


\begin{ex}\label{ex:Wweights}
For the classical root systems, the rays or fundamental weight conjugates are:
\begin{eqnarray*}
\R_{A_{d-1}} &=& \{\e_S \, : \, \emptyset \subsetneq S \subsetneq [d]\} \\
\R_{B_{d}} &=& \{e_S \, : \, \textrm{admissible } S \sqsubseteq [\pm d] \, , \, |S| \leq d-1 \} \cup \Big\{\frac12 e_S \, : \, \textrm{admissible } S \sqsubseteq [\pm d] \, , \, |S| = d \Big\} \\
\R_{C_{d}} &=& \{e_S \, : \, \textrm{admissible } S \sqsubseteq [\pm d]\} \\
\R_{D_{d}} &=& \{e_S \, : \, \textrm{admissible } S \sqsubseteq [\pm d] \, , \, |S| \leq d-2 \} \cup \Big\{\frac12 e_S \, : \, \textrm{admissible } S \sqsubseteq [\pm d] \, , \, |S| = d \Big\}. 
\end{eqnarray*}
\end{ex}

\section{\textsf{Coxeter permutohedra and some important deformations}} \label{sec:examples}

One of the main goals of this paper is to describe the cone of deformations of the $\Phi$-permutohedron; we will do so in Theorem \ref{thm:main}. Before we do that, we motivate that result by discussing some notable examples of generalized $\Phi$-permutohedra in this section. 

Throughout this section, let $\Phi$ be a root system and $W$ be its Weyl group. 
The following definitions will play an important role.

\begin{defi}\label{def:Bruhat}
Define the \emph{length} $l(w)$ of an element $w \in W$ to be the smallest $k$ for which there exists a factorization $w=s_{i_1} \cdots s_{i_k}$ into simple reflections $s_{i_1}, \ldots, s_{i_k} \in S$.

\noindent $\bullet$ The \emph{Bruhat order} on $W$ is the poset defined by decreeing that $w < w s_\alpha$ for every element $w \in W$ and reflection $s_\alpha$ with $\alpha \in \Phi$ such that $l(w) < l(w s_\alpha)$.

\noindent $\bullet$ The \emph{weak order} on $W$ is the poset defined by decreeing that $w < ws_i$ for every element $w \in W$ and simple reflection $s_i$ with $\alpha_i \in \Delta$ such that $l(w) < l(ws_i)$.
\end{defi}

\subsection{\textsf{The Coxeter permutohedron}}

\begin{defi}
The \emph{standard Coxeter permutohedron of type $\Phi$} or \emph{$\Phi$-permutohedron} is the Minkowski sum of the positive roots of $\Phi$; that is, the zonotope of the Coxeter arrangement $\HH_\phi$:
\begin{eqnarray*}
\Pi_\Phi &:=&  \sum_{\alpha \in \Phi^+} [-\alpha/2,\alpha/2] \\
&=& \operatorname{conv}\{w \cdot \rho : w\in W\},
\end{eqnarray*}
where $\rho = \frac12(\sum_{\alpha \in \Phi^+} \alpha) = \myomega_1 + \cdots + \myomega_d$ is the sum of the fundamental weights.
\end{defi}

The 1-skeleton of the $\Phi$-permutohedron can be identified with the Hasse diagram of the weak order on $W$:
vertices $w  \rho$ and $w'  \rho$ are connected by an edge if and only if $w' = ws_i$ for some simple reflection $s_i$, and in that situation $w < w'$ in the weak order  if and only if $\langle w  \rho ,\rho \rangle > \langle w'  \rho, \rho \rangle$.

\begin{defi}
A \emph{generalized Coxeter permutohedron} or \emph{Coxeter polymatroid} is a deformation of the $\Phi$-permutohedron $\Pi_\Phi$; that is, a polytope whose normal fan coarsens the Coxeter complex $\Sigma_\Phi$.
\end{defi}

We collect the results of this section in the following proposition. The following subsections include precise definitions and further details.

\begin{prop}\label{prop:examples}
The following families of polytopes are deformations of Coxeter permutohedra:

1. the \emph{weight polytopes} describing the representations of semisimple Lie algebras \cite{FultonHarris},

2. the \emph{Coxeter graphic zonotopes} of Zaslavsky \cite{Zaslavskysigned},

3. the \emph{Coxeter matroids} of Gelfand--Serganova \cite{Coxetermatroids, GelfandSerganova},

4. the \emph{Coxeter root cones} of Reiner \cite{Reiner} and Stembridge \cite{Stembridge}, and

5. the \emph{Coxeter associahedra} of Hohlweg-Lange-Thomas \cite{HohlwegLangeThomas}.

\end{prop}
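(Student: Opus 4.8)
The unifying tool is the zonotope criterion of Proposition \ref{prop:zonoedges}. Since the $\Phi$-permutohedron is the zonotope $\Z(\Phi^+)$ of the Coxeter arrangement, a polytope $P$ is a deformation of $\Pi_\Phi$ if and only if every edge of $P$ is parallel to a root $\alpha\in\Phi$; more generally, an unbounded polyhedron is an extended deformation if and only if each of its faces affinely spans a parallel translate of $\span_\RR(S)$ for some $S\subseteq\Phi$. Thus the plan is to treat the five families one at a time and, in each case, verify this edge- (or face-) parallelism condition, importing for each family the relevant structure theorem from the literature.

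The two most direct cases are the weight polytopes and the Coxeter graphic zonotopes. A weight polytope is $\conv(W\mu)$ for a dominant weight $\mu$; its edges join vertices $w\mu$ and $ws_i\mu$ differing by a simple reflection $s_i$ (the $1$-skeleton being a quotient of the weak order), and using the reflection formula \eqref{eq:s_i} we compute $w\mu - ws_i\mu = w(\mu - s_i\mu) = \langle \mu,\alpha_i^\vee\rangle\, w\alpha_i$, a scalar multiple of the root $w\alpha_i\in\Phi$. Hence every edge is root-parallel. A Coxeter graphic zonotope is, by definition, the zonotope $\Z(S)$ of a subset $S\subseteq\Phi$ of roots, so its edges are parallel to the vectors of $S$; the conclusion is then immediate from Proposition \ref{prop:zonoedges}, and in fact the fan of a subarrangement always coarsens the full Coxeter fan.

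For the Coxeter matroids and the Coxeter associahedra the verification is essentially a citation. The Gelfand--Serganova theorem \cite{GelfandSerganova, Coxetermatroids} characterizes Coxeter matroids precisely as those subsets $M\subseteq W/W_J$ whose associated polytope $\conv\{w\mu : w\in M\}$ (for a suitable weight $\mu$) has all edges parallel to roots; this is exactly our criterion, so Coxeter matroid polytopes are deformations of $\Pi_\Phi$. For the Coxeter associahedra, the Hohlweg--Lange--Thomas realizations \cite{HohlwegLangeThomas} are obtained by moving and deleting facets of $\Pi_\Phi$ while keeping their directions fixed, and their normal fans are the Cambrian (cluster) fans, which are known to coarsen the Coxeter complex $\Sigma_\Phi$; by Definition \ref{defi:defo} this exhibits them as deformations.

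Finally, the Coxeter root cones of Reiner \cite{Reiner} and Stembridge \cite{Stembridge} are unbounded, so here one applies the extended-deformation half of Proposition \ref{prop:zonoedges}: these cones are generated by roots or cut out by root hyperplanes, so each of their faces spans an intersection of hyperplanes $H_\alpha$ and hence a parallel translate of $\span_\RR(S)$ for some $S\subseteq\Phi$. The main obstacle in the whole proposition is precisely that the five families are defined in genuinely different languages, so there is no single uniform computation: the real content is matching each external structure theorem---most substantially the Gelfand--Serganova characterization and the fact that the cluster/Cambrian fan refines to the Coxeter fan---to the common edge-parallelism criterion, together with the bookkeeping needed to place the unbounded root cones inside the extended-deformation framework.
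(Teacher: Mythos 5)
Your proposal is correct and takes essentially the same approach as the paper: each family is handled either by the edge/face-parallelism criterion of Proposition \ref{prop:zonoedges} or by the fan-coarsening definition, with the same external inputs (the Gelfand--Serganova edge characterization of Coxeter matroids, and the fact that the Cambrian fan coarsens the Coxeter fan) cited in the same roles. The only deviation is cosmetic: for weight polytopes you verify root-parallelism directly via $w\mu - ws_i\mu = \langle \mu,\alpha_i^\vee\rangle\, w\alpha_i$, where the paper simply cites \cite[Lemma 4.13]{LiCaoLi} for this fact.
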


\subsection{\textsf{Weight polytopes}}

\begin{defi}
The \emph{weight polytope} $P_\Phi(x)$ of a point $x \in V$ is the convex hull of the orbits of $x$ under the action of the Weyl group $W$:
\[
P_\Phi(x) := \operatorname{conv}\{w\cdot x : w\in W\}.
\]
\end{defi}

These polytopes are of fundamental importance in the theory of Lie algebras\footnote{when $\Phi$ is crystallographic}. \cite{FultonHarris, HumphreysLie, Khare} A semisimple complex Lie algebra $\mathfrak{g}$ has an associated root system $\Phi$ which controls its representation theory. The irreducible representations $L(\myomega)$ of $\mathfrak{g}$ are in bijection with the  points $\myomega \in D \cap \Lambda$, where $D$ is the dominant chamber of the root system $\Phi$ and $\Lambda$ is the \emph{weight lattice} generated by the fundamental weights. The representation $L(\myomega)$ decomposes as a direct sum of weight spaces $L(\myomega)_\mu$ which are indexed precisely by the lattice points $\mu$ in the weight polytope $P_\Phi(x)$.

\begin{proof}[Proof of Proposition \ref{prop:examples}.1]
Every edge of $P_\Phi(x)$ is parallel to a root in $\Phi$ by \cite[Lemma 4.13]{LiCaoLi}, so Proposition \ref{prop:zonoedges} implies that weight polytopes are generalized $\Phi$-permutohedra.
\end{proof}

\begin{rem}
An important special case of this construction is the \emph{root polytope} of $\Phi$, which is the convex hull of the roots.
\end{rem}


\subsection{\textsf{Coxeter graphic zonotopes}}

\begin{defi}
For any subset $\Psi \subseteq \Phi^+$ of positive roots, we define the \emph{Coxeter graphic zonotope} to be the Minkowski sum
\[
Z(\Psi) = \sum_{\alpha \in \Psi} [0, \alpha].
\]
\end{defi}

In type $A_{n-1}$, a subset $\Psi$ of $\Phi^+ = \{e_i - e_j \, : \, 1 \leq i < j \leq n\}$ corresponds to a graph $G_\Psi$ with vertex set $[n]$ and an edge connecting $i$ and $j$ whenever $e_i-e_j \in \Psi$. The definition above is the usual definition of the graphic zonotope of $G_\Psi$.

\begin{proof}[Proof of Proposition \ref{prop:examples}.2]
The normal fan of $Z(\Psi)$ is given by the subarrangement $\HH_\Psi \subseteq \HH_\Phi$ consisting of the normal hyperplanes to the roots in $\Psi$. This is clearly a coarsening of $\Sigma_\Phi$, so Coxeter graphic zonotopes are indeed generalized $\Phi$-permutohedra.
\end{proof}

\subsection{\textsf{Coxeter matroids}}

Gelfand and Serganova \cite{GelfandSerganova} introduced Coxeter matroids, a generalization of matroids that arises in the geometry of homogeneous spaces $G/P$. The book \cite{Coxetermatroids} offers a detailed acount; here we give a brief sketch. Throughout this subsection, fix a parabolic subgroup $W_I$ of the Weyl group $W$ generated by $I$.


%
%
%
%


Let $\myomega_I = \sum_{i \notin I} \myomega_i$. 
As we will see in Proposition \ref{prop:orbit}, the quotient $W/W_I$ is in bijection with the set of vertices of the weight polytope $Q(W/W_I) := P_\Phi(\myomega_I)$. The coset $\overline w \in W/W_I$ corresponds to the vertex $\delta_I(\overline w) = w  \myomega_I$, which is independent of the choice of $w \in \overline w$ because $\myomega_I \in C_I$.

\begin{defi} \label{thm:Coxetermatroids} 
For a subset $M\subseteq W/W_I$ define the polytope
\begin{equation} \label{eq:Q(M)}
Q(M) := \operatorname{conv}\{ \delta_I(\overline w) \ | \ \overline w \in M\} \subseteq Q(W/W_I).
\end{equation}
Then $M$ is a \emph{Coxeter matroid} if every edge of $Q(M)$ is parallel to a root in $\Phi$.
\end{defi}

Coxeter matroids are originally defined differently in terms of a Coxeter analog of the greedy algorithm, and the equivalence \cite[Theorem 6.3.1]{Coxetermatroids} to the definition given above is a generalization of the GGMS theorem in \cite{GGMS}.  If $M$ is a Coxeter matroid, we call $Q(M)$ its \emph{base polytope} or \emph{Coxeter matroid polytope}. 

\begin{proof}[Proof of Proposition \ref{prop:examples}.3]
Theorem \ref{thm:Coxetermatroids} and Proposition \ref{prop:zonoedges} readily imply that Coxeter matroid polytopes are generalized $\Phi$-permutohedra.
\end{proof}

In type $A_{n-1}$, when $W=S_n$ and $W_I= \langle s_1, \ldots, s_{k-1}, s_{k+1}, \ldots, s_{n-1} \rangle$
 is a maximal parabolic subgroup, the quotient $W/W_I$ is in bijection with the collection of $k$-subsets of $[n]$, and a $(W,W_I)$-matroid is precisely a matroid on $[n]$ of rank $k$. 
 
It is worth mentioning an especially interesting family of Coxeter matroid polytopes. 
Given $u \leq v$ where $v$ is a minimal-length coset representative of $W/W_I$, the 
 \emph{Bruhat interval polytope} is defined to be
$ Q^I_{u,v} := \conv \{ z \cdot \lambda_J \, : \, u \leq z \leq v \textrm{ in the Bruhat order}\}.$
These are the Coxeter matroids corresponding to Richardson varieties; see \cite{CDM19, TW15}.

\subsection{\textsf{Coxeter root cones}}

\begin{defi}
For any subset $\Psi \subseteq \Phi$ of roots we define the \emph{Coxeter root cone}
\[
\cone(\Psi) = \left\{ \sum_{\alpha \in \Psi} c_\alpha \alpha \, : \, c_\alpha \geq 0 \textrm{ for all } \alpha \in \Psi\right\}
\]
\end{defi}

Coxeter root cones are dual to the \emph{Coxeter cones} of Stembridge \cite{Stembridge}. Furthermore, pointed Coxeter root cones are in one-to-one correspondence with Reiner's \emph{parsets} \cite{Reiner}. In type $A$, these families are in bijection with \emph{preposets} and \emph{posets} on $[n]$, respectively. This correspondence sends $\cone(\Psi)$ to the (pre)poset given by $i < j$ if $e_i-e_j \in \cone(\Psi)$.

\begin{proof}[Proof of Proposition \ref{prop:examples}.5]
Every face of $\cone(\Psi)$ is generated by roots, so its dual face in the normal fan $\Sigma_{\cone(\Psi)}$ is cut out by hyperplanes in the Coxeter arrangement. Therefore any Coxeter root cone is an extended Coxeter generalized permutohedron.
\end{proof}

\subsection{\textsf{Coxeter associahedra}}

A \emph{Coxeter element} $c$ of $W$ is the product of the simple reflections of $W$ in some order. Reading \cite{Reading} introduced the \emph{Cambrian fan} $\mathcal{F}_c$, a complete fan with rich combinatorics and close connections with the theory of cluster algebras \cite{ReadingSpeyer}. Hohlweg, Lange, and Thomas constructed the \emph{Coxeter associahedron} $\textsf{Asso}_c(W)$, a polytope whose normal fan is the Cambrian fan $\mathcal{F}_c$; for details, see \cite{HohlwegLangeThomas}.

In type $A$, one choice of Coxeter element gives rise to Loday's realization of the  \emph{associahedron}, a polytope with $C_n = \frac{1}{n+1} {2n \choose n}$ vertices discovered by Stasheff in homotopy theory.  \cite{Loday} In type $B$, one choice  gives Bott and Taubes's \emph{cyclohedron}, which originally arose in knot theory. \cite{BottTaubes}

\begin{proof}[Proof of Proposition \ref{prop:examples}.5]
This holds since
 the Cambrian fan $\mathcal{F}_c$ coarsens the Coxeter fan. \cite{ReadingSpeyer}
\end{proof}

\section{\textsf{Deformations of Coxeter permutohedra: the $\Phi$-submodular cone}} \label{sec:Phisubmodularcone}

Our next goal is to describe the deformation cone of a Coxeter permutohedron.  Throughout this section, we let $\Phi$ be a fixed finite root system of dimension $d$.  Let $W$ be the corresponding Weyl group, $\Sigma = \Sigma_\Phi$ the Coxeter complex, $D$ a fixed choice of a fundamental chamber, $A$ the Cartan matrix, and $\R = W \{\myomega_1, \ldots, \myomega_d\}$ the set of conjugates of the fundamental weights $\{\myomega_1, \ldots, \myomega_d\}$.

Recall that a piecewise linear function on a fan is uniquely determined by its restriction to the rays of the fan. Since each ray of the Coxeter complex $\Sigma_\Phi$ contains a conjugate to a fundamental weight, and this correspondence is bijective, we may identify the space $\operatorname{PL}(\Sigma_\Phi)$ of piecewise-linear functions on $\Sigma_\Phi$, with the space $\RR^\R$ of functions from $\R$ to $\RR$.

\subsection{\textsf{$\Phi$-submodular functions}}

\begin{defi} 
A function $h: \R \rightarrow \RR$ is \emph{$\Phi$-submodular} if the following equivalent conditions hold:

 $\bullet$ 
$h$ is in the deformation cone $\operatorname{Def}(\Sigma_\Phi)$ of the Coxeter complex of $\Phi$.

 $\bullet$ 
When regarded as a piecewise linear function in $\PL(\Sigma_\Phi)$, the function $h$  is convex.

 $\bullet$ 
$h$ is the support function of the polytope $P_h:= \{v \in V \, : \, \langle \myomega, v \rangle \leq h(\myomega) \textrm{ for all } \myomega \in \R\}$ that is a generalized $\Phi$-permutohedron.


\end{defi}

The correspondence between $\Phi$-submodular functions $h$ and generalized $\Phi$-permutohedra $P_h$ is a bijection by Theorem \ref{th:defcone}. Furthermore, every defining inequality $\langle \myomega, v \rangle \leq h(\myomega)$ of $P_h$ is \emph{tight}, in the sense that $\displaystyle h(\myomega) = \max_{v \in P_h} \langle \myomega, v \rangle$ for all $\myomega \in \R$.
We now describe $\operatorname{Def}(\Sigma_\Phi)$, the \emph{$\Phi$-submodular cone}.


\begin{theorem} \label{thm:main}
A function $h: \R \rightarrow \RR$ is \emph{$\Phi$-submodular} if and only if the following two equivalent sets of inequalities hold:

\begin{enumerate}
\item 
\emph{(Local $\Phi$-submodularity)}
For every element $w \in W$ of the Weyl group and every simple reflection $s_i$ and corresponding fundamental weight $\myomega_i$,
\begin{equation}\label{ineq:localsubmod}
h(w  \myomega_i) +  h(ws_i  \myomega_i) \geq 
  \sum_{j \in N(i)}- 
  A_{ji} \,
  h(w  \myomega_j) 
\end{equation}
where $A$ is the Cartan matrix and $N(i)$ is the set of neighbors of $i$ in the Dynkin diagram.
\item
\emph{(Global $\Phi$-submodularity)}
For any two conjugates of fundamental weights $\myomega, \myomega' \in \R$
\begin{equation}\label{ineq:Batyrev}
h(\myomega) + h(\myomega') \geq
h(\myomega + \myomega') 
\end{equation}
where $h$ is regarded as a piecewise-linear function on $\Sigma_\Phi$.
\end{enumerate}
 \end{theorem}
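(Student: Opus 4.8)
The plan is to establish the full equivalence through the chain ``$\Phi$-submodular $\iff$ local $\iff$ global,'' using the Wall-Crossing Criterion (Lemma \ref{lem:wallcross}) as the main engine for the first equivalence and elementary properties of support functions to close the loop. Since the Coxeter complex $\Sigma_\Phi$ is complete and simplicial, each wall $\tau$ separates exactly two chambers and determines a unique wall-crossing inequality, so Lemma \ref{lem:wallcross} says that $h$ is $\Phi$-submodular precisely when $I_{\Sigma,\tau}(h)\geq 0$ for every wall $\tau$.

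First I would identify the walls and compute their wall-crossing inequalities explicitly. By the corollary identifying the walls of $\Sigma_\Phi$ with the pairs $\{w,ws_i\}$, each wall is $\tau = wC_{\{i\}} = \operatorname{cone}(w\myomega_j : j\neq i)$, separating the chambers $wD$ and $ws_iD$; the two rays \emph{not} on $\tau$ are $w\myomega_i$ and $ws_i\myomega_i$. Using the reflection formula \eqref{eq:s_i}, which gives $s_i\myomega_j = \myomega_j - \delta_{ij}\alpha_i$, together with $\alpha_i = \sum_k A_{ki}\myomega_k$ from \eqref{alphatolambda} and $A_{ii}=2$, I would compute the unique linear dependence across the wall:
$$
w\myomega_i + ws_i\myomega_i = \sum_{j\in N(i)} (-A_{ji})\, w\myomega_j ,
$$
where the sum ranges over $N(i)$ because $A_{ji}=0$ for $j\notin N(i)\cup\{i\}$ and $-A_{ji}>0$ for $j\in N(i)$. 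Since the coefficients $c=c'=1$ on the left are positive, this is exactly a dependence of the form \eqref{eq:wallcross}, and the associated wall-crossing inequality \eqref{ineq:wallcross} is verbatim the local inequality \eqref{ineq:localsubmod}. (The two labelings $(w,s_i)$ and $(ws_i,s_i)$ of the same wall yield the same inequality, since $s_i^2=e$ leaves both sides unchanged.) This proves $\Phi$-submodular $\iff$ local.

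To incorporate the global inequalities I would argue in two directions. For $\Phi$-submodular $\Rightarrow$ global: by Theorem \ref{th:defcone} every $h\in\operatorname{Def}(\Sigma_\Phi)$ is a convex support function, hence positively homogeneous of degree one, and a positively homogeneous convex function is subadditive, so $h(\myomega)+h(\myomega')\geq h(\myomega+\myomega')$ for all $\myomega,\myomega'\in\R$, which is \eqref{ineq:Batyrev}. For global $\Rightarrow$ local, I would apply \eqref{ineq:Batyrev} to the special pair $(w\myomega_i, ws_i\myomega_i)$: by the identity above, $w\myomega_i+ws_i\myomega_i = \sum_{j\in N(i)}(-A_{ji})\,w\myomega_j$ is a nonnegative combination of the rays of the wall $\tau = \operatorname{cone}(w\myomega_j : j\neq i)$, on which $h$ is linear, so evaluating $h$ there distributes over the sum and recovers \eqref{ineq:localsubmod}. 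Combining gives local $\iff$ global, completing the three-way equivalence.

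The main obstacle is the bookkeeping in the wall-crossing computation: one must use the weight conjugates $w\myomega_j$ themselves as the ray representatives on which $h$ is evaluated (rather than primitive lattice vectors) so that the dependence has the clean form above, and one must track the sign conventions so that the nonnegative quantities $-A_{ji}$ land on the correct side. A conceptually different route to the global inequalities would invoke Batyrev's Criterion (Lemma \ref{lem:Batyrev}) directly, but this would require controlling the primitive collections of $\Sigma_\Phi$; the subadditivity argument is preferable precisely because it sidesteps any classification of minimal non-faces.
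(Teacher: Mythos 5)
Your proof is correct, and it splits into a part that mirrors the paper and a part that is genuinely different. For the local inequalities you do essentially what the paper does: identify the walls of $\Sigma_\Phi$ with the pairs $\{w,ws_i\}$, compute the unique linear dependence $w\myomega_i + ws_i\myomega_i = \sum_{j\in N(i)}(-A_{ji})\,w\myomega_j$ across each wall, and invoke the Wall-Crossing Criterion (Lemma \ref{lem:wallcross}); the paper obtains the coefficients by pairing with the coroots $\alpha_j^\vee$ rather than expanding $\alpha_i=\sum_k A_{ki}\myomega_k$ as you do, but the two computations are equivalent. For the global inequalities, however, the paper takes a different route: it invokes Batyrev's Criterion (Lemma \ref{lem:Batyrev}) together with the fact that the Coxeter complex is a \emph{flag} simplicial complex (cited from Abramenko--Brown), so that the primitive collections are exactly the pairs of rays not spanning a $2$-face, whence Batyrev's conditions are precisely the inequalities \eqref{ineq:Batyrev}. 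You instead close a cycle of implications: $h\in\operatorname{Def}(\Sigma_\Phi)$ implies global by subadditivity of positively homogeneous convex functions, and global implies local by evaluating \eqref{ineq:Batyrev} at the pair $(w\myomega_i, ws_i\myomega_i)$ and using linearity of $h$ on the wall $\operatorname{cone}(w\myomega_j : j\neq i)$, which contains the sum because $-A_{ji}\geq 0$. Your route is more elementary and self-contained---it avoids both Batyrev's criterion and the flagness of $\Sigma_\Phi$, which is a nontrivial combinatorial input---at the modest cost of establishing the equivalence of the global conditions only through the cycle rather than directly; the paper's route buys the structural information that only the non-face pairs $\{\myomega,\myomega'\}$ impose nontrivial conditions (the face pairs holding automatically with equality), and it ties the global inequalities to a standard criterion from toric geometry.
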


\begin{rem}
By the sparseness of the Cartan matrix, the local $\Phi$-submodular inequalities \eqref{ineq:localsubmod} have at most three terms on the right hand side, given by the neighbors of $i$ in the Dynkin diagram.
\end{rem}

\begin{rem}
To interpret the global $\Phi$-submodular inequalities \eqref{ineq:Batyrev} directly in terms of the function $h \in \RR^\R$, we need to  find the minimal cone $C$ of $\Sigma_\Phi$ containing $\myomega + \myomega'$. If $\R_C = C \cap \R$ is the set of conjugates of fundamental weights in the cone $C$, we can write $\myomega + \myomega' = \sum_{w \in \R_C} c_w w$ for a unique choice of non-negative constants $c_w$, and \eqref{ineq:Batyrev} means that $h(\myomega) + h(\myomega')  \geq  \sum_{w \in \R_C} c_w h(w)$. In particular, \eqref{ineq:Batyrev} holds trivially when $\myomega$ and $\myomega'$ span a face of $\Sigma_\Phi$.
\end{rem}

\begin{proof}[Proof of Theorem \ref{thm:main}.1]
We know that the deformation cone $\df(\Sigma_\Phi)$ is given by the wall crossing inequalities of Lemma \ref{lem:wallcross}. We first compute them for the walls of the fundamental domain $D$.

Let us apply Definition \ref{defi:wallcross} to the wall $H_i = H_{\alpha_i}$ of $D$ orthogonal to the simple root $\alpha_i$, which separates the chambers $D$ and $s_i  D$. Notice that the only ray of $D$ that is not on the wall $H_i$ is precisely the one spanned by the fundamental weight $\myomega_i$. Similarly, the only ray of $s_iD$ that is not on $H_i$ is the one spanned by  $s_i  \myomega_i \in \R$. Therefore we need to find the coefficients such that
\[
c \myomega_i + c' s_i  \myomega_i = \sum_{i\neq j} c_j\myomega_j. 
\]
Since $\myomega_i$ and $s_i \myomega_i$ are symmetric across the wall $H_i$, the coefficients $c$ and $c'$ in the equation above are equal, and we may set them both equal to $1$. To compute the coefficient $c_j$ for $j \neq i$, let us take the inner product of both sides with $\alpha^\vee_j$. We obtain that
\[
\langle s_i  \myomega_i, \alpha^\vee_j \rangle = c_j, 
\]
keeping in mind that the bases $\{\alpha^\vee_1, \ldots, \alpha^\vee_d\}$ and $\{\myomega_1, \ldots, \myomega_d\}$ are dual. Thus
\[
c_j =
 \left\langle
\myomega_i - {\langle \myomega_i, \alpha^\vee_i \rangle}\alpha_i \,  , \, \alpha^\vee_j 
\right\rangle = 
0 - \langle \alpha_i,\alpha^\vee_j \rangle = 
-A_{ji}.
\]
It follows that 
\begin{equation} \label{eq:wall}
\myomega_i + s_i  \myomega_i = \sum_{i\neq j} - A_{ji} \myomega_j,
\end{equation}
so the wall-crossing inequality is 
\begin{equation}\label{ineq:wallcox}
h(\myomega_i) + h(s_i  \myomega_i)
\geq
\sum_{j\neq i} -A_{ji} h(\myomega_j).
\end{equation}
It remains to observe that $A_{ji} = 0$ unless $i$ and $j$ are neighbors in the Dynkin diagram.

More generally, consider the wall-crossing inequality for the wall $wH_i$, which separates chambers $w  D$ and $ws_i  D$. The rays of these chambers that are not on the wall are $w \myomega_i$ and $w s_i   \myomega_i$, and 
\[
w \myomega_i + w s_i   \myomega_i = \sum_{j \in N(i)} - A_{ji} \,\, w  \myomega_j. 
\]
by \eqref{eq:wall}.
Therefore the wall-crossing inequalities are indeed the ones given in \eqref{ineq:localsubmod}.
\end{proof}

%
%

\begin{proof}[Proof of Theorem \ref{thm:main}.2]
Since the Coxeter complex is simplicial, the deformation cone $\df(\Sigma_\Phi)$ is also given by Batyrev's condition as described in Lemma \ref{lem:Batyrev}. To apply it, we need to understand the primitive collections of rays in $\Sigma_\Phi$.

The Coxeter complex $\Sigma_\Phi$ is \emph{flag}, in the sense that a set of rays $R_1, \ldots, R_k$ forms a $k$-face of $\Sigma$ if and only if every pair of them forms a $2$-face of $\Sigma$. \cite[p. 29]{AbramenkoBrown} This is equivalent to saying that the primitive collections are the pairs that do not form a $2$-face. The desired result follows.
\end{proof}

\begin{defi}
A \emph{discrete $\Phi$-submodular function} is a $\Phi$-submodular function $h: \R_{\Phi} \to \ZZ$ whose values are integers.  
\end{defi}

In type $A_{n-1}$, discrete submodular functions $h: \R_{A_{n-1}} \to \ZZ$ are in bijection with lattice generalized permutohedra. This fact generalizes as follows.

\begin{prop}
Let $\Phi$ be a crystallographic root system
and $\Lambda_{R^\vee}$ be the lattice generated by the coroots.
The map $h \mapsto P_h$ is a bijection between the discrete $\Phi$-submodular functions $h: \R_{\Phi} \to \ZZ$ and the generalized $\Phi$-permutohedra $P_h$ which are lattice polytopes with respect to the coroot lattice $\Lambda_{R^\vee}$.
\end{prop}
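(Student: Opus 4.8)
The plan is to establish a bijection between discrete $\Phi$-submodular functions valued in $\ZZ$ and the lattice generalized $\Phi$-permutohedra (with respect to $\Lambda_{R^\vee}$). Since we already know from Theorem \ref{th:defcone} that $h \mapsto P_h$ is a bijection between all $\Phi$-submodular functions and all generalized $\Phi$-permutohedra, the entire content here is to show that the integrality of $h$ on $\R_\Phi$ is equivalent to $P_h$ being a lattice polytope with respect to the coroot lattice. The natural strategy is to prove this equivalence directly by translating between the two descriptions of the polytope.

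First I would recall that each vertex of $P_h$ corresponds to a chamber $wD$ of the Coxeter complex, and that this vertex $v_w$ is the unique point satisfying the $d$ defining inequalities $\langle w\myomega_i, v\rangle = h(w\myomega_i)$ tightly, for $i = 1, \ldots, d$. Equivalently, writing $v_w = w \cdot v_e$ by $W$-equivariance (since $h$ need not be symmetric this requires care — instead I would work directly in chamber $wD$), the vertex is determined by solving the linear system $\langle w\myomega_i, v_w\rangle = h(w\myomega_i)$. Using that the dual basis to $\{w\myomega_i\}$ inside $\span$ is $\{w\alpha_i^\vee\}$ — because $\langle \myomega_i, \alpha_j^\vee\rangle = \delta_{ij}$ and $w$ acts by isometries — I would solve explicitly to get
\[
v_w = \sum_{i=1}^d h(w\myomega_i)\, w\alpha_i^\vee.
\]
This is the key computation: it expresses each vertex of $P_h$ as an explicit integer combination of the conjugate coroots $w\alpha_i^\vee$ whenever $h$ is integer-valued.

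From this formula both directions follow. If $h$ is integer-valued, then every $v_w$ lies in the $\ZZ$-span of $\{w\alpha_i^\vee\} \subseteq \Lambda_{R^\vee}$, so every vertex of $P_h$ lies in $\Lambda_{R^\vee}$, i.e.\ $P_h$ is a lattice polytope. Conversely, if $P_h$ is a lattice polytope, then each vertex $v_w \in \Lambda_{R^\vee}$, and recovering the values via $h(w\myomega_i) = \langle w\myomega_i, v_w\rangle$ shows these are integers, since $\langle w\myomega_i, \cdot\rangle = \langle \myomega_i, w^{-1}(\cdot)\rangle$ pairs a fundamental weight against a coroot lattice element. Here I would invoke the crystallographic hypothesis: for a crystallographic root system the pairing $\langle \myomega_i, \alpha_j^\vee\rangle$ is integral (indeed $\delta_{ij}$), and more generally $\langle \myomega_i, \beta^\vee\rangle \in \ZZ$ for all coroots $\beta^\vee$, so pairing a weight against $\Lambda_{R^\vee}$ lands in $\ZZ$.

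The main obstacle I anticipate is bookkeeping rather than conceptual: the function $h$ is not assumed $W$-symmetric, so I must be careful to evaluate $h$ on the correct conjugate weights $w\myomega_i$ attached to chamber $wD$, rather than naively transporting values from the fundamental chamber. Verifying that the vertex formula $v_w = \sum_i h(w\myomega_i)\, w\alpha_i^\vee$ is correct for every chamber (and that it agrees with the support-function description on the overlapping walls) is the step requiring the most attention. Once the explicit vertex formula is in hand, the crystallographic integrality pairing $\langle \Lambda, \Lambda_{R^\vee}\rangle \subseteq \ZZ$ closes both directions cleanly, and the bijection restricts as claimed.
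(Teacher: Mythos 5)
Your proposal is correct and is essentially the paper's own argument, unpacked: the paper's proof notes that the weight lattice $\Lambda_W$ and the coroot lattice $\Lambda_{R^\vee}$ are dual, that the Coxeter fan is smooth over $\Lambda_W$ (the rays $w\myomega_1,\ldots,w\myomega_d$ of each chamber form a $\ZZ$-basis of $\Lambda_W$), and then invokes the standard correspondence between integer-valued support functions on a smooth fan and lattice polytopes with respect to the dual lattice. Your explicit vertex formula $v_w=\sum_{i} h(w\myomega_i)\,w\alpha_i^\vee$ is precisely a hands-on proof of that cited correspondence, resting on the same two ingredients (the conjugate coroots $w\alpha_i^\vee$ form the dual basis to the rays of the chamber $wD$, and crystallographicity makes both lattices $W$-stable with integral pairing between them).
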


\begin{proof}
The weight lattice $\Lambda_W \subset V$ generated by the fundamental weights and the coroot lattice $\Lambda_{R^\vee}$ generated by the simple coroots are dual lattices under the inner product $\langle \cdot, \cdot \rangle$ on $V$.

%

The Coxeter fan $\Sigma_\Phi$ is a rational fan over the lattice $\Lambda_W$ in $V$. It is also \emph{smooth}, in the sense that the primitive rays of each maximal cone form a basis of $\Lambda_W$.  Therefore, a discrete $\Phi$-submodular function $h: \R_\Phi \to \ZZ$ is the support function of a polytope whose vertices are integral over the dual lattice $\Lambda_{R^\vee}$, and conversely.
\end{proof}

%
%
%

\subsection{\textsf{The classical types: submodular, bisubmodular, disubmodular functions}}

For the classical root systems, these notions  are of particular combinatorial importance. Let us now describe them, keeping in mind that fundamental weights and their conjugates have simple combinatorial interpretations, as explained in Example \ref{ex:Wweights}.

\medskip

\noindent 1. (Type $A$: submodular functions) 
For $f: \R_{A_{d-1}} \rightarrow \RR$, let us write $f(S) := f(\e_S)$ for $\emptyset \subsetneq S \subsetneq [d]$ and $f(\emptyset) = f([d])=0$.
The $A_{d-1}$-submodular inequalities of Theorem \ref{thm:main}  say

\medskip

\begin{tabular}{rll}
local: & $f(S a) + f(S  b) \geq f(S) + f(S  a  b)$ & for $S \subseteq [d], \, \,   \{a, b\} \subseteq [d] - S$ \\
\\
global: &  $f(S) + f(T) \geq f(S \cap T) + f(S \cup T)$ & for $S, T \subseteq [d]$ 
\end{tabular}

\medskip
\noindent 
where for simplicity we omit brackets, for instance, denoting $Sab := S \cup\{a,b\}$.

The only difference with the classical notion of \emph{submodular functions} is the additional condition that $f([d]) = 0$. In fact, the submodular functions $F: 2^{[d]} \rightarrow \RR$ are precisely those of the form $F(S) = f(\e_S) + \alpha|S|$ for a $\Phi$-submodular function $f$ and a constant $\alpha$. Geometrically, we go from $F$ to $f$ by  translating the generalized permutohedron along the $\1$ direction so that it lies on the hyperplane $x_1 + \cdots + x_d = 0$.

\medskip

\noindent 2. (Type $B$ and $C$: bisubmodular functions) 
The submodular inequalities of type $B_d$ and $C_d$ are equivalent since they correspond to the same fan; we  focus on $C_d$. For $f: \R_{C_d} \rightarrow \RR$, let us write $f(S) = f(e_S)$ for any admissible $S \sqsubseteq [\pm d]$. The $C_d$-submodular inequalities of Theorem \ref{thm:main}  say

\medskip

\begin{tabular}{rll}
local: & $f(S  a) + f(S  b) \geq f(S) + f(S  a  b)$  
 & for $S \sqsubseteq [\pm d], \, \, |S| \leq d-2, \,\,   \{a, b\} \sqsubset [\pm d] - S$ \\
& $f(S  a) + f(S  \overline{a}) \geq 2f(S)$  
 & for $S \sqsubseteq [\pm d], \, \, |S| =d-1, \,\,   \{a\} \sqsubset [\pm d] - S$ \\
\\
 global: &  $f(S) + f(T) \geq f(S \sqcap T) + f(S \sqcup T)$ & for $S, T \sqsubseteq [d]$ 
\end{tabular}
\medskip

\noindent where $S \sqcap T = S \cap T$ and $S \sqcup T = \{e \in S \cup T \, : \, -e \notin S \cup T\}$ are admissible.
As Arcila showed in \cite{Arcila}, this is precisely the classical notion of \emph{bisubmodular functions} from optimization due to Fujishige \cite{Fujishige}.

\medskip

\noindent 3. (Type $D$: disubmodular functions) 
For $f: \R_{D_d} \rightarrow \RR$, let us write $f(S) = f(e_S)$ for any admissible $S \sqsubseteq [\pm d]$ of size at most $d-2$, and $g(S) = f(\frac12 e_S)$ for any admissible $S \sqsubseteq [\pm d]$ of size $d$.
The local $D_d$-submodular inequalities of Theorem \ref{thm:main}  say that for any admissible $S \sqsubseteq [\pm d]$

\medskip

\begin{tabular}{ll}
$f(S  a) + f(S  b) \geq f(S) + f(S  a  b)$ &  for 
$|S| \leq d-4,  \, \,   \{a, b\}  \sqsubseteq [\pm d] - S,$ \\
$f(S  a) + f(S b) \geq f(S) + g(S abc) + g(S ab\overline{c}) $ &  for 
$|S| = d-3,  \, \,   \{a, b, c\} \sqsubseteq [\pm d] - S$  \\
$g(S  a  b) + g(S \overline{ab}) \geq f(S) $ & for 
$|S| = d-2,  \, \,   \{a, b\} \sqsubseteq [\pm d] - S$  
\end{tabular}
\medskip

The global $D_d$-submodular inequalities can similarly be derived in a case-by-case analysis. It is easier to notice that a function that is piecewise linear on the Coxeter arrangement $D_d$ is also piecewise linear on the Coxeter arrangement $B_d$, where its convexity can be checked more cleanly. Accordingly, if $f$ and $g$ are defined on the admissible subsets of $[d]$ sizes at most $d-2$ and equal to $d$, respectively, define $h$ on all admissible subsets by
\[
h(S) = \begin{cases}
f(S) & \textrm{ if } |S| \leq d-2 \\
g(Sa) + g(S\overline{a}) & \textrm{ if } |S| = d-1 \textrm{ and } a \notin S \\
2g(S) & \textrm{ if } |S| = d
\end{cases}
\]
Then $(f,g)$ is disubmodular if and only if $h$ is bisubmodular; that is,
\[
h(S) + h(T) \geq h(S \sqcap T) + h(S \sqcup T) \qquad \textrm{ for } S, T \sqsubseteq [d].
\]

\noindent This seems to be a new notion, which we call \emph{disubmodular function}. We expect it to be useful in combinatorial optimization problems with underlying symmetry of type $D$.

\medskip

\noindent 4. (Exceptional types)
It would be very interesting to find applications of these notions for the exceptional Coxeter groups. For instance, might submodular functions of type $E$ shed new light on problems with an underlying symmetry of type $E_6, E_7,$ or $E_8$?

\section{\textsf{The symmetric case: weight polytopes and the inverse Cartan matrix }}\label{sec:symmetric}

The action of the Weyl group $W$ on the Coxeter complex naturally gives rise to actions of $W$ on the vector space $\PL(\Sigma_\Phi)$ and the deformation cone $\df(\Sigma_\Phi) \subset \PL(\Sigma_\Phi)$. 
This section is devoted to studying the deformations of the Coxeter permutohedron and the Coxeter submodular functions that are invariant under this action.

\subsection{\textsf{Weight polytopes}}\label{sec:weight}

Recall that the weight polytope $P_\Phi(x)$ of a point $x \in V$ is 
\[
P_\Phi(x) := \operatorname{conv}\{w\cdot x : w\in W\}.
\]
These are precisely the generalized Coxeter permutohedra that are invariant under the action of the Coxeter group. 
In this section we study them in more detail, collecting some properties that will play an important role in what follows.

\begin{defi}
The \emph{fundamental weight polytopes} or \emph{$\Phi$-hypersimplices} of the root system $\Phi$ are the $d$ weight polytopes $P_\Phi(\myomega_1), \ldots, P_\Phi(\myomega_d)$ corresponding to the fundamental weights of $\Phi$.
\end{defi}

\begin{figure}[h]
\begin{subfigure}[b]{0.3\textwidth}
\centering
\includegraphics[height=5cm]{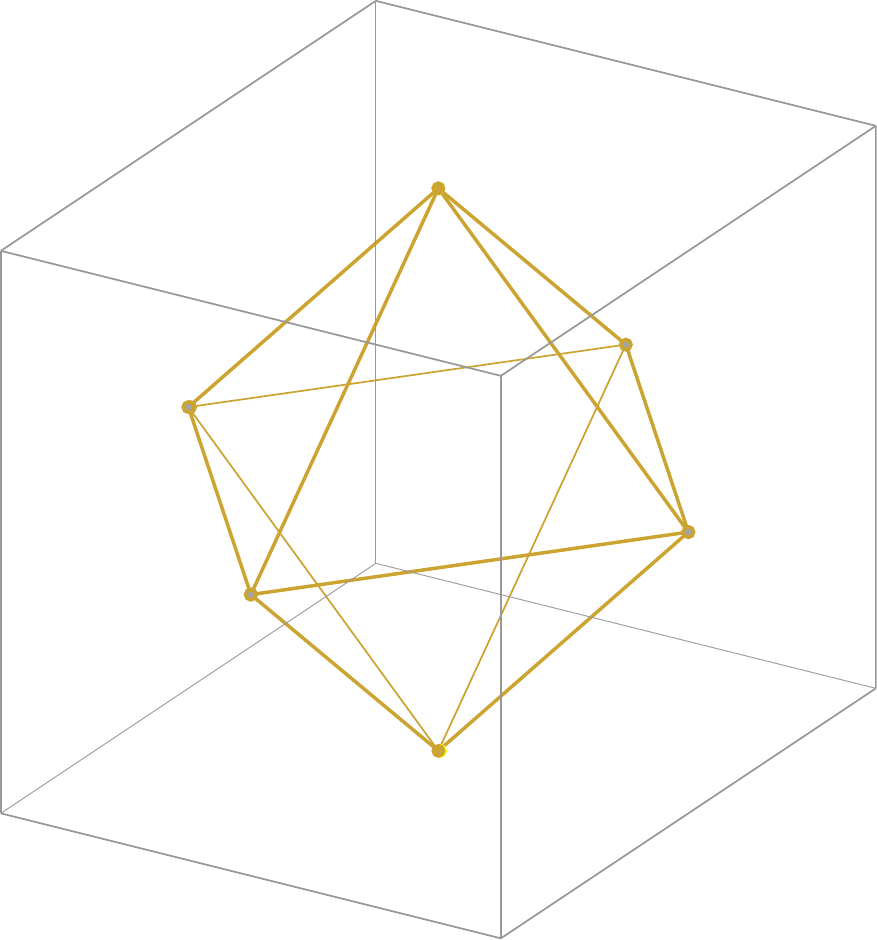}\qquad\qquad
\begin{tikzpicture}
\draw[fill]
(0,0) circle [radius = .08]
(1,0) circle [radius = .08]
(2,0) circle [radius = .08]
;
\draw
(0,0) -- (1,0) -- (2,0) node [midway, above] {4}
;
\draw (0,0) circle [radius = .15];
\end{tikzpicture}
\end{subfigure}
\qquad
\begin{subfigure}[b]{0.3\textwidth}
\centering
\includegraphics[height=5cm]{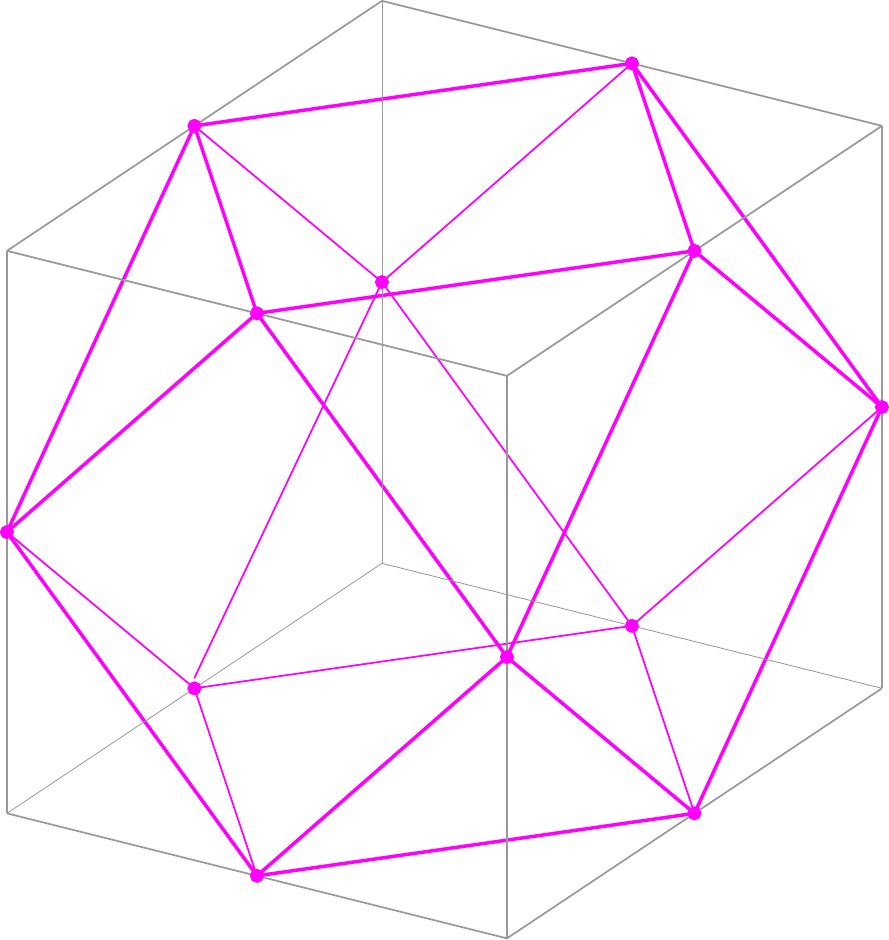}\qquad\qquad
\begin{tikzpicture}
\draw[fill]
(0,0) circle [radius = .08]
(1,0) circle [radius = .08]
(2,0) circle [radius = .08]
;
\draw
(0,0) -- (1,0) -- (2,0) node [midway, above] {4}
;
\draw (1,0) circle [radius = .15];
\end{tikzpicture}
\end{subfigure}
\qquad
\begin{subfigure}[b]{0.3\textwidth}
\centering
\includegraphics[height=5cm]{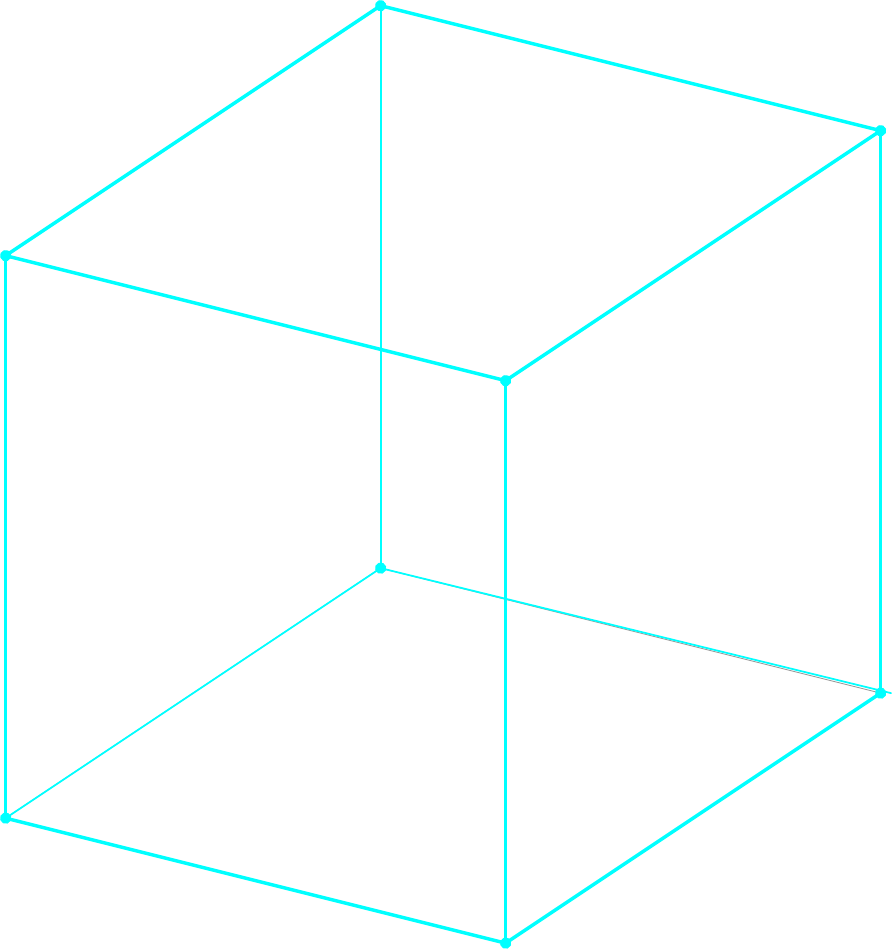}
\begin{tikzpicture}
\draw[fill]
(0,0) circle [radius = .08]
(1,0) circle [radius = .08]
(2,0) circle [radius = .08]
;
\draw
(0,0) -- (1,0) -- (2,0) node [midway, above] {4}
;
\draw (2,0) circle [radius = .15];
\end{tikzpicture}
\end{subfigure}

%

\caption{The fundamental weight polytopes $P_{C_3}(\lambda_i)$ of $C_3$ $(1 \leq i \leq 3)$; compare with Figure \ref{fig:C3}.b. \label{fig:C3hypersimplices}}
\end{figure}

The fundamental weight polytopes for the root system $C_3$ are shown in Figure \ref{fig:C3hypersimplices}.

Since $W$ acts transitively on the chambers of the Coxeter complex $\Sigma_\Phi$, in the study of the weight polytopes $P_\Phi(x)$ it is sufficient to consider only points $x$ in the fundamental domain $D$. For those points, the combinatorial type of the weight polytope $P_{\Phi}(x)$ is determined by the face of $D$ containing $x$ in its interior:

\begin{prop}\cite[\S1.12]{reflection} \label{prop:orbit} 
For $x$ in the interior of $C_I$, the chambers of the normal fan of $P_\Phi(x)$ are in bijection with $W/W_I$. The chamber of $\Sigma_{P_\Phi(x)}$ corresponding to the coset $wW_I$ is the union of the $|W_I|$ chambers of  the Coxeter complex $\Sigma_\Phi$ labeled $ww_I$ for $w_I \in W_I$. 
%
\end{prop}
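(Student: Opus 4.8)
The plan is to read off the normal fan of $P_\Phi(x)$ by determining, for each open chamber of the Coxeter complex, which vertex of $P_\Phi(x)$ is selected by the linear functionals lying in that chamber. First I would pin down the vertices. Since $x$ lies in the relative interior of $C_I$, Theorem \ref{thm:isotropy} shows that its isotropy group is exactly the parabolic subgroup $W_I$; hence $w\cdot x = w'\cdot x$ if and only if $wW_I = w'W_I$. Thus $wW_I \mapsto w\cdot x$ is a well-defined bijection from $W/W_I$ onto the orbit $W\cdot x$, and the vertices of $P_\Phi(x)$ are among these $|W/W_I|$ points.

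The heart of the argument is a dominant-maximization claim: for a functional $u$ in the open fundamental chamber $\operatorname{int}(D)$, the maximum of $\langle u, \cdot\rangle$ over $W\cdot x$ is attained exactly at $x$. Indeed, since $x$ is dominant (as $x \in C_I \subseteq D$), for every $w \in W$ the difference $x - w\cdot x$ is a nonnegative combination $\sum_i c_i\alpha_i$ of simple roots \cite{reflection}. For $u \in \operatorname{int}(D)$ one has $\langle u, \alpha_i\rangle > 0$ for all $i$, so $\langle u, x - w\cdot x\rangle = \sum_i c_i\langle u, \alpha_i\rangle \geq 0$, with equality forcing every $c_i = 0$, that is $x - w\cdot x = 0$, i.e.\ $w \in W_I$. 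Hence $x$ is the unique maximizer, so it is a vertex and $\operatorname{int}(D)$ lies in its open normal cone. This is the step I expect to require the most care, since it rests entirely on the structural fact that dominant weights maximize against dominant functionals.

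Finally I would transport this by $W$. Because the inner product is $W$-invariant and $W\cdot x$ is $W$-stable, for $u \in \operatorname{int}(wD)$ we have $w^{-1}u \in \operatorname{int}(D)$, so the maximizer of $\langle u, \cdot\rangle$ over the orbit is $w\cdot x$. As $w$ ranges over $W$ these open chambers cover $U$, so every vertex of $P_\Phi(x)$ has the form $w\cdot x$; combined with the first paragraph, this identifies the vertices of $P_\Phi(x)$, hence the chambers of $\Sigma_{P_\Phi(x)}$, with $W/W_I$. Moreover $\operatorname{int}(w'D)$ lies in the open normal cone of $w\cdot x$ precisely when $w'W_I = wW_I$. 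Since $P_\Phi(x)$ is a deformation of $\Pi_\Phi$ (Proposition \ref{prop:examples}.1), its normal fan coarsens $\Sigma_\Phi$, so each chamber of $\Sigma_{P_\Phi(x)}$ is a union of closed Coxeter chambers; by the previous sentence the chamber indexed by $wW_I$ is exactly $\bigcup_{w_I \in W_I} ww_I D$, which is the asserted description.
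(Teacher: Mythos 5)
Your proof is correct. Note that the paper does not actually prove Proposition \ref{prop:orbit} --- it cites it to \cite[\S 1.12]{reflection} --- and your argument (the stabilizer of a point in the relative interior of $C_I$ is $W_I$ via Theorem \ref{thm:isotropy}; a dominant point uniquely maximizes every functional in $\operatorname{int}(D)$ because $x - w\cdot x$ is a nonnegative combination of simple roots; then transport by $W$-equivariance) is precisely the standard argument behind that citation, so there is nothing in the paper to diverge from. One small imprecision: the open chambers $\operatorname{int}(wD)$ do not cover $U$ (they miss the reflection hyperplanes), but this is harmless --- vertices of the convex hull of a finite set already lie in that set, and each open vertex normal cone is full-dimensional and hence meets some open chamber; your final step, using the coarsening $\Sigma_{P_\Phi(x)} \preceq \Sigma_\Phi$ from Proposition \ref{prop:examples}.1 (which is proved independently of Proposition \ref{prop:orbit}, so no circularity), then correctly upgrades the open-chamber information to the asserted equality of closed chambers.
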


The following observations about weight polytopes $P_\Phi(x)$ will be important to us.

\begin{cor} \label{cor:specialweightpolytopes}

\begin{enumerate}
\item
When $x$ is half the sum of the positive roots, $P_\Phi(x)$ is precisely the standard $\Phi$-permutohedron $\Pi_\Phi$. 

\item
When $x$ is in the interior of the fundamental chamber $D$, $P_\Phi(x)$ is normally equivalent to $\Pi_\Phi$. 
%
%

\item
When $x$ is in the interior of face $C_{[d]- I}$ of $D$, the polytope $P_\Phi(x)$ has positive edge length on the edge between $w  x$ and $w s_i  x$ for each $w \in W$ and $i \in I$, and zero everywhere else. In other words, its normal fan is obtained from $\Sigma_\Phi$ by only keeping the walls between the chambers $w  D$ and $ws_i  D$ for each $w \in W$ and $i \in I$.\end{enumerate}
\end{cor}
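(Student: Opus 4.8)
The plan is to deduce all three statements from Proposition~\ref{prop:orbit}, which already pins down the normal fan of a weight polytope $P_\Phi(x)$ in terms of the face of $D$ whose relative interior contains $x$.

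Part (1) is immediate from the definitions: choosing $x = \rho = \frac12\sum_{\alpha\in\Phi^+}\alpha$, the polytope $P_\Phi(\rho)=\conv\{w\cdot\rho : w\in W\}$ is exactly the defining description of $\Pi_\Phi$. For part (2), I would first note that $\rho = \myomega_1+\cdots+\myomega_d$ lies in the interior of $D = C_\emptyset$, since $\langle \rho,\alpha_i^\vee\rangle = \sum_j \langle \myomega_j,\alpha_i^\vee\rangle = 1 > 0$ for every simple root. Applying Proposition~\ref{prop:orbit} with $I=\emptyset$ then shows that for any interior point $x$ of $D$ the chambers of $\Sigma_{P_\Phi(x)}$ are in bijection with $W/W_\emptyset = W$, each consisting of a single chamber of $\Sigma_\Phi$; hence $\Sigma_{P_\Phi(x)} = \Sigma_\Phi$. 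Since the same holds for $\rho$, the polytopes $P_\Phi(x)$ and $\Pi_\Phi = P_\Phi(\rho)$ have the common normal fan $\Sigma_\Phi$ and are therefore normally equivalent.

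Part (3) is the main content. I would apply Proposition~\ref{prop:orbit} to the face $C_{[d]-I}$, whose isotropy group is $W_{[d]-I}$ by Theorem~\ref{thm:isotropy}, so that $\Sigma_{P_\Phi(x)}$ is the coarsening of $\Sigma_\Phi$ with chambers indexed by $W/W_{[d]-I}$; the chamber of $wW_{[d]-I}$ is the union of the fine chambers $ww'D$ for $w'\in W_{[d]-I}$. The vertex of $P_\Phi(x)$ dual to this chamber is $w\cdot x$: for $u$ in the interior of $wD$ the maximum of $\langle u,\cdot\rangle$ over the orbit $\{w'\cdot x\}$ is attained at $w\cdot x$, by the standard fact that a dominant point maximizes the inner product over its own $W$-orbit. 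Now a wall $\{w,ws_i\}$ of $\Sigma_\Phi$ survives in the coarsening exactly when $wD$ and $ws_iD$ sit in different coarse chambers, i.e.\ when $wW_{[d]-I}\neq ws_iW_{[d]-I}$, i.e.\ when $s_i\notin W_{[d]-I}$; by the standard Coxeter fact that $s_i\in W_J$ iff $i\in J$, this occurs precisely when $i\in I$. Dually, these surviving walls are the edges $[\,w\cdot x,\ ws_i\cdot x\,]$ with $i\in I$, each of direction $w(x-s_ix) = \langle x,\alpha_i^\vee\rangle\,w\alpha_i$ with $\langle x,\alpha_i^\vee\rangle>0$, hence of positive length. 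For $i\in[d]-I$ we have $\langle x,\alpha_i\rangle=0$, so $s_i$ fixes $x$ and $[\,w\cdot x,\ ws_i\cdot x\,]$ collapses to a point. This yields both the edge-length statement and the description of the normal fan as $\Sigma_\Phi$ with only the walls $\{w,ws_i\}$, $i\in I$, retained.

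The step demanding the most care is the wall-survival bookkeeping in part (3): correctly matching the coarse chambers of Proposition~\ref{prop:orbit} to the vertices $w\cdot x$, and verifying that a fine wall $\{w,ws_i\}$ persists exactly for $i\in I$. The two supporting facts — that a dominant point maximizes the pairing over its orbit, and that $s_i\in W_J \iff i\in J$ — are standard and I would simply cite them.
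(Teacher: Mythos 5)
Your proposal is correct and follows essentially the same route as the paper: the paper also deduces all three parts from Proposition~\ref{prop:orbit}, observing that the normal fan of $P_\Phi(x)$ is obtained from $\Sigma_\Phi$ by keeping only the $W$-translates of the walls of $D$ not containing $x$. Your write-up simply makes explicit the coset bookkeeping, the identification of the vertex dual to each coarse chamber with $w\cdot x$, and the edge-length computation $w\cdot x - ws_i\cdot x = \langle x,\alpha_i^\vee\rangle\, w\alpha_i$, all of which the paper's terse proof leaves implicit.
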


\begin{proof}
1. This follows directly from the definitions. 

\smallskip

\noindent 
2. and 3. The normal fan of $P_{\Phi}(x)$ is obtained from the Coxeter complex $\Sigma_\Phi$ by keeping only the $W$-translates of the walls of the fundamental chamber $D$ that do not contain $x$; that is, the walls between chambers $ D$ and $s_i D$ for each $i\in I$.
\end{proof}

We can describe any weight polytope as a Minkowski sum of the fundamental weight polytopes:

\begin{prop}\label{prop:orbitsum}
Let $\myomega_1, \ldots, \myomega_d$ be a set of fundamental weights of $\Phi$ and $a_1, \ldots, a_d \geq 0$.  Then 
\[
P_{\Phi}\left(\sum_{i=1}^d a_i \myomega_i\right) = \sum_{i = 1}^d a_i P_\Phi(\myomega_i)
\]
In particular, for any $x$ is in the interior of $C_{[d]- I}$, the weight polytope $P_{\Phi}(x)$ is normally equivalent to the Minkowski sum $\sum_{i\in I} P_\Phi(\myomega_i) = P_\Phi(\myomega_I).$
\end{prop}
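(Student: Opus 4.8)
The plan is to prove the Minkowski identity at the level of support functions, exploiting that a polytope is determined by its support function and that support functions are additive under Minkowski sums (see Section \ref{sec:polytopes}). Writing $x = \sum_{i=1}^d a_i\myomega_i$ with all $a_i \geq 0$, so that $x$ lies in the closed fundamental domain $D$, it then suffices to verify that for every $u \in V$
\[
h_{P_\Phi(x)}(u) = \sum_{i=1}^d a_i\, h_{P_\Phi(\myomega_i)}(u).
\]

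The heart of the matter is a formula for the support function of a weight polytope with dominant base point. Since $P_\Phi(x) = \operatorname{conv}\{w\cdot x : w \in W\}$ and $W$ acts by isometries, its support function is
\[
h_{P_\Phi(x)}(u) = \max_{w \in W}\langle u, w x\rangle = \max_{w\in W}\langle w^{-1}u, x\rangle = \max_{v \in Wu}\langle v, x\rangle.
\]
First I would establish the standard fact that, because $x$ is dominant, this maximum over the orbit $Wu$ is attained at the unique dominant representative $u^+$ of $Wu$; that is, $h_{P_\Phi(x)}(u) = \langle u^+, x\rangle$. To see this one reduces to the case $u = u^+$ dominant (which does not change the orbit $Wu$) and checks $\langle u - wu, x\rangle \geq 0$ for all $w \in W$: writing $u - wu$ as a non-negative combination $\sum_i c_i\alpha_i$ of the simple roots, a standard property of dominant vectors (see \cite{reflection}), the quantity becomes $\sum_i c_i\langle \alpha_i, x\rangle \geq 0$ since $\langle \alpha_i, x\rangle \geq 0$ is exactly the dominance of $x$.

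The crucial point is that the maximizer $u^+$ depends only on $u$ and not on the choice of dominant base point $x$. Consequently $h_{P_\Phi(x)}(u) = \langle u^+, x\rangle$ is \emph{linear} in $x$ over the cone $D$, and substituting $x = \sum_i a_i\myomega_i$ gives $h_{P_\Phi(x)}(u) = \sum_i a_i\langle u^+,\myomega_i\rangle = \sum_i a_i\, h_{P_\Phi(\myomega_i)}(u)$. As this holds for all $u$, the displayed support-function identity follows, yielding the Minkowski decomposition $P_\Phi(\sum_i a_i\myomega_i) = \sum_i a_i P_\Phi(\myomega_i)$. For the ``in particular'' statement, a point $x$ in the interior of $C_{[d]-I} = \operatorname{cone}(\myomega_i : i \in I)$ has the form $x = \sum_{i \in I} a_i\myomega_i$ with every $a_i > 0$, so the main identity gives $P_\Phi(x) = \sum_{i\in I} a_i P_\Phi(\myomega_i)$; since positive scaling of a summand leaves the normal fan of a Minkowski sum unchanged, this is normally equivalent to $\sum_{i\in I}P_\Phi(\myomega_i)$, which is identified with $P_\Phi(\myomega_I)$ by one further application of the identity together with the definition of $\myomega_I$.

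I expect the main obstacle to be precisely the key lemma of the second paragraph: pinning down that $\max_{v \in Wu}\langle v, x\rangle$ is achieved at the dominant representative $u^+$ independently of the dominant base point $x$. This is the only genuinely Coxeter-theoretic input; once it is in hand, everything else is a formal manipulation of support functions and normal fans.
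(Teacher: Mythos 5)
Your proof is correct, but it runs in the opposite direction from the paper's. The paper first proves the ``in particular'' statement (normal equivalence): by Corollary \ref{cor:specialweightpolytopes}.3, the normal fan of $P_\Phi(x)$ for $x$ in the relative interior of $C_{[d]-I}$ is obtained from $\Sigma_\Phi$ by keeping exactly the walls $wH_i$ with $i\in I$, which is precisely the coarsest common refinement of the fans $\Sigma_{P_\Phi(\myomega_i)}$, $i \in I$, i.e.\ the normal fan of the Minkowski sum. It then upgrades normal equivalence to the equality of polytopes by a vertex-matching argument: for $x$ interior to $D$, the $x$-maximal face of each $P_\Phi(\myomega_i)$ is the vertex $\myomega_i$, so $\sum_i a_i\myomega_i$ is a vertex of $Q=\sum_i a_iP_\Phi(\myomega_i)$, and $W$-symmetry puts every vertex of $P=P_\Phi(\sum_i a_i\myomega_i)$ among the vertices of $Q$. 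You instead prove the Minkowski identity directly at the level of support functions, via the Coxeter-theoretic lemma that $\max_{v\in Wu}\langle v,x\rangle=\langle u^+,x\rangle$ for dominant $x$, with $u^+$ the dominant representative of $Wu$ (this follows from $u^+-wu^+\in\operatorname{cone}(\Delta)$, a standard fact available in \cite{reflection}), and then deduce the normal-equivalence statement as a formal corollary. Each route has its advantages: the paper's is shorter given that Proposition \ref{prop:orbit} and Corollary \ref{cor:specialweightpolytopes} are already in hand, while yours is self-contained modulo one standard orbit lemma, and the explicit formula $h_{P_\Phi(x)}(u)=\langle u^+,x\rangle$ is genuinely stronger output --- for instance it immediately yields Lemma \ref{lem:inverseCartan}, since $h_{P_\Phi(\myomega_k^\vee)}(w\myomega_i)=\langle \myomega_k^\vee,\myomega_i\rangle=A^{-1}_{ki}$, which the paper proves by a separate argument.
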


\begin{proof}
Let us first prove the second statement. By Corollary \ref{cor:specialweightpolytopes}, the normal fan $\Sigma$ of 
$P_{\Phi}(x)$ is the coarsest common refinement of the fans $\Sigma_i$ for $i\in I$, where $\Sigma_i = \Sigma_{P_\Phi(\myomega_i)}$ is obtained from $\Sigma_\Phi$ by only keeping the walls between chambers $w D$ and $ws_i  D$.  

Let us call the two polytopes in the equation $P$ and $Q$.
 For any $x \in D$, the $x$-maximal face of $P_\Phi(\myomega_i)$ is its vertex $\myomega_i$. Therefore the $x$-maximal face of $Q$ is $\sum a_i \myomega_i$, which is thus a vertex of $Q$. By $W$-symmetry, $w  (\sum a_i \myomega_i)$ is also a vertex of $Q$ for every $w \in W$. Since every vertex of $P$ is a vertex of $Q$, and $P$ and $Q$ are normally equivalent by the previous paragraph,  $P=Q$.
\end{proof}

The face structure of weight polytopes is well understood. It was first studied in \cite{Max89} using the idea of a ``shadow" introduced by Tits \cite{Tit74}, and was further studied in \cite{Ren09} in a different context, using linear algebraic monoids.  It is shown in \cite[\S4]{GM18} that the following result from \cite{Ren09}, which was originally stated only for crystallographic root systems, follows from the results of \cite{Max89} and hence holds for arbitrary finite root systems.

\begin{theorem}\label{thm:weightfaces} \cite[Corollary 1.3]{Ren09} (cf.\ \cite[Theorem 2.4.3]{GM18}) For $I\subseteq \Delta$, let $\lambda$ be an element in the relative interior of the cone $C_I$, so that the isotropy group of $\lambda$ is $W_I$.  There is a bijection
\[
\left\{
\begin{gathered}
\text{$W$-orbits of faces of} \\
\text{the weight polytope $P_\Phi(\lambda)$}
\end{gathered}
\right\}
\longleftrightarrow 
\left\{
\begin{aligned}[r]
J \subseteq \Delta \mid \text{no connected component} \\
\text{ of $\Gamma|_J$ is contained in $\Gamma|_I$}
\end{aligned}
\right\}
\]
where $\Gamma$ is the Dynkin diagram. 
Under this bijection, such a subset $J$ corresponds to the $W$-orbit of a $|J|$-dimensional face $F_J$ that is combinatorially equivalent to $P_{\Phi_J}(\sum_{i \in (\Delta- I)\cap J}\lambda_i)$.
\end{theorem}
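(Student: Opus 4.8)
The plan is to read the faces of the weight polytope $P_\Phi(\lambda)$ directly off its normal fan, which we already control. By Proposition \ref{prop:orbit} and Corollary \ref{cor:specialweightpolytopes}, $\Sigma_{P_\Phi(\lambda)}$ is obtained from the Coxeter complex by deleting the $W$-translates of the walls $H_i$ with $i\in I$; in particular the normal cone of the vertex $\lambda=\delta_I(e)$ is $W_I\overline D=\bigcup_{w_I\in W_I} w_I\overline D$, where $\overline D$ is the closed fundamental chamber. Since $P_\Phi(\lambda)$ is $W$-invariant we have $P_{wu}=wP_u$, so every $W$-orbit of faces has a representative $P_u$ with $u\in\overline D$, and such a $u$ lies in the relative interior of a unique face $C_K$ of $\overline D$ with $K\subseteq\Delta$. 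The first task is thus to identify the face selected by a dominant direction.

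First I would show that for $u$ in the relative interior of $C_K$ the $u$-maximal face is the parabolic weight polytope
\[
P_u \;=\; \conv(W_K\lambda)\;=\;P_{\Phi_K}(\lambda),
\qquad \Phi_K=\Phi\cap\span(\alpha_k:k\in K).
\]
One inclusion is immediate, since $W_K=\operatorname{Stab}(u)$ fixes $u$ and hence each $w\in W_K$ attains the maximal value $\langle u,\lambda\rangle$. For the reverse inclusion I would use that $\lambda$ is dominant, so $\lambda-w\lambda=\sum_i c_i\alpha_i$ with $c_i\ge 0$, together with $\langle u,\alpha_i\rangle\ge 0$ and $\langle u,\alpha_i\rangle=0$ exactly for $i\in K$; equality $\langle u,\lambda-w\lambda\rangle=0$ then forces $c_i=0$ for $i\notin K$, so every maximizing vertex lies in $\lambda+\span(\alpha_k:k\in K)$, and one checks this slice meets the orbit $W\lambda$ in precisely $W_K\lambda$. (Alternatively this identification can be read off from the wall-deletion description of $\Sigma_{P_\Phi(\lambda)}$ above.)

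Next I would reduce the index set $K$ to the admissible set $J$. Decomposing $\Gamma|_K$ into connected components gives $\Phi_K=\bigoplus_c\Phi_{K_c}$ and a product decomposition $P_{\Phi_K}(\lambda)=\prod_c P_{\Phi_{K_c}}(\lambda)$. For an irreducible factor, $W_{K_c}$ acts irreducibly on its span with orbit centroid $0$, so $P_{\Phi_{K_c}}(\lambda)$ is a single point when the projection of $\lambda$ vanishes, i.e.\ when $K_c\subseteq I$, and is genuinely $|K_c|$-dimensional otherwise. Setting $J:=\bigcup\{K_c: K_c\not\subseteq I\}$, the point factors collapse and $P_u\cong P_{\Phi_J}(\lambda)$ has dimension $|J|$; moreover the connected components of $\Gamma|_J$ are exactly the surviving $K_c$, none of which lies in $\Gamma|_I$, so $J$ is admissible. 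Since the image of $\lambda$ in $\Phi_J$ lies in the relative interior of the $\Phi_J$-face indexed by $I\cap J$, Corollary \ref{cor:specialweightpolytopes} yields the asserted combinatorial equivalence with $P_{\Phi_J}(\sum_{i\in(\Delta\setminus I)\cap J}\myomega_i)$. Surjectivity of $J\mapsto[F_J]$ is then clear: any admissible $J$ is realized by taking $u$ in the relative interior of $C_J$.

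The hard part is injectivity: distinct admissible $J$ must give distinct $W$-orbits of faces, and $J$ is not directly visible from an arbitrary dominant representative, since crossing the deleted walls $H_i$, $i\in I$, changes $K$ but not the face. Here I would prove the reduction that $W$-orbits of faces are in bijection with $W_I$-orbits of faces through the vertex $\lambda$: every orbit has a representative through $\lambda$, and if $F,F'$ both contain $\lambda$ with $F'=wF$, then choosing $u\in\operatorname{rel int}N(F)\subseteq W_I\overline D$ one has $wu\in W_I\overline D$ as well; writing $u=w_1 d$ and $wu=w_2 d'$ with $w_1,w_2\in W_I$ and $d,d'\in\overline D$, the uniqueness of the dominant representative in each $W$-orbit forces $d=d'$, whence $wu=w_2w_1^{-1}u$ and $F'=(w_2w_1^{-1})F$ with $w_2w_1^{-1}\in W_I$. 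Combined with the second step this identifies $W_I$-orbits of faces through $\lambda$ with the possible vertex sets $W_K\lambda$, hence with admissible $J$; and distinct admissible $J$ give distinct faces because the affine hull of $F_J$ is a translate of $\span(\alpha_j:j\in J)$, which meets $\Delta$ exactly in $\{\alpha_j:j\in J\}$ and so recovers $J$. Assembling these bijections gives the theorem. The step requiring the most care is the identification of the dominant-direction face together with the component reduction, which is precisely where the condition that no component of $\Gamma|_J$ be contained in $\Gamma|_I$ is forced.
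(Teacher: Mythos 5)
The paper does not prove this statement at all; it imports it from the literature (\cite{Ren09}, via \cite{Max89} and \cite{GM18}), so your proposal has to stand entirely on its own. Most of it does. The identification $P_u=\operatorname{conv}(W_K\lambda)$ for $u$ in the relative interior of $C_K$, the collapse of the components of $K$ contained in $I$ producing an admissible $J$ with $\dim F_J=|J|$ and the stated combinatorial type, the surjectivity statement, and the reduction from $W$-orbits of faces to $W_I$-orbits of faces containing the vertex $\lambda$ are all correct (and nicely organized).

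The genuine gap is the final injectivity claim. After your reduction, what must be proved is: if $F_{J'}=wF_J$ for some $w\in W_I$ with $J,J'$ admissible, then $J=J'$. Your affine-hull argument proves something strictly weaker, namely $F_J\neq F_{J'}$ when $J\neq J'$. It does not exclude a \emph{nontrivial} $w\in W_I$ carrying $F_J$ to $F_{J'}$: the affine hull of $wF_J$ is a translate of $w\,\operatorname{span}(\alpha_j : j\in J)$, which is in general not spanned by simple roots, so ``intersect with $\Delta$'' gives no information about it. The concern is not vacuous, because distinct standard parabolic subsystems can be $W$-conjugate (in $A_3$, $\operatorname{span}(\alpha_1)$ and $\operatorname{span}(\alpha_3)$ are conjugate); ruling this out must use both that the conjugating element fixes $\lambda$ and that $J,J'$ are admissible, and this is exactly where the hypothesis ``no component of $\Gamma|_J$ lies in $\Gamma|_I$'' does its real work. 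A repair is possible but is a substantive argument, not a one-liner: replace $w$ by the minimal-length element $v$ of $wW_J$ (harmless, since $W_J$ stabilizes $F_J$ setwise); then $v\Delta_J=\Delta_{J'}$, $v$ maps $\Phi_J$-dominant points to $\Phi_{J'}$-dominant points, hence fixes the unique dominant vertex $\lambda$, so $v\in W_I$. For $j\in J\setminus I$ the coweight $\myomega_j^\vee$ lies in the pointwise fixed space of $W_I$, and $\langle \myomega_j^\vee, v\alpha_j\rangle=\langle \myomega_j^\vee,\alpha_j\rangle=1$ forces $v\alpha_j=\alpha_j$; hence $J\setminus I=J'\setminus I$. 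But showing $v$ cannot move the simple roots indexed by $J\cap I$ still requires an induction along paths of $\Gamma|_J$ toward $J\setminus I$, using that $\Gamma$ is a forest and that $W_I$ preserves the irreducible factors of $\Phi_I$ --- and even this local step is delicate: $W(D_4)$ contains an element fixing the central simple root and interchanging two legs, so the statement is simply false without the admissibility hypothesis feeding into the induction. As written, your proof establishes a surjection from admissible subsets $J$ onto $W$-orbits of faces, together with the dimension and type statements, but not the claimed bijection.
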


\begin{figure}[h]
\centering
\begin{subfigure}[b]{0.15\textwidth}
\centering
\includegraphics[height=2cm]{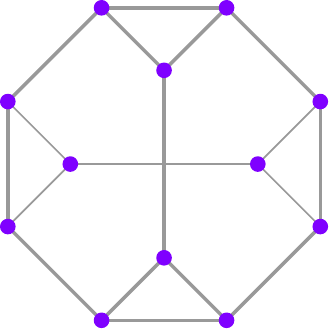} \\ \vspace{.3cm}
\begin{tikzpicture}
\node[color=white] at (0,0) [rectangle,draw] (){$\bullet$};
\draw[fill]
(0,0) circle [radius = .07]
(0.9,0) circle [radius = .07]
(1.8,0) circle [radius = .07]
;
\draw
(0,0) -- (0.9,0) -- (1.8,0); 
\draw (0,0) circle [radius = .15];
\draw (0.9,0) circle [radius = .15];
\end{tikzpicture}
\end{subfigure}
\begin{subfigure}[b]{0.15\textwidth}
\centering
\includegraphics[height=2cm]{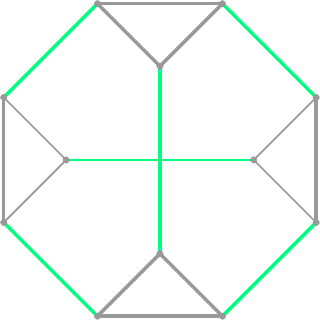} \\ \vspace{.3cm}

\begin{tikzpicture}
\draw[fill]
(0,0) circle [radius = .07]
(0.9,0) circle [radius = .07]
(1.8,0) circle [radius = .07]
;
\draw
(0,0) -- (0.9,0) -- (1.8,0); 
\draw (0,0) circle [radius = .15];
\draw (0.9,0) circle [radius = .15];
\draw [color=springgreen] (-0.2,-0.2) rectangle ++ (0.4,0.4);
\end{tikzpicture}
\end{subfigure}
\begin{subfigure}[b]{0.15\textwidth}
\centering
\includegraphics[height=2cm]{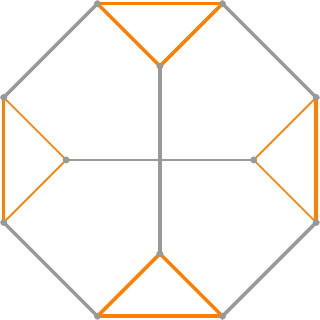} \\ \vspace{.3cm}

\begin{tikzpicture}
\draw[fill]
(0,0) circle [radius = .07]
(0.9,0) circle [radius = .07]
(1.8,0) circle [radius = .07]
;
\draw
(0,0) -- (0.9,0) -- (1.8,0); 
\draw (0,0) circle [radius = .15];
\draw (0.9,0) circle [radius = .15];
\draw [color=orange] (0.7,-0.2) rectangle ++ (0.4,0.4);
\end{tikzpicture}
\end{subfigure}
\begin{subfigure}[b]{0.15\textwidth}
\centering
\includegraphics[height=2cm]{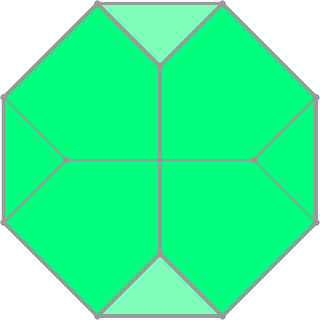} \\ \vspace{.3cm}
\begin{tikzpicture}
\draw[fill]
(0,0) circle [radius = .07]
(0.9,0) circle [radius = .07]
(1.8,0) circle [radius = .07]
;
\draw
(0,0) -- (0.9,0) -- (1.8,0); 
\draw (0,0) circle [radius = .15];
\draw (0.9,0) circle [radius = .15];
\draw [color=chartreuse] (-0.2,-0.2) rectangle ++ (1.3,0.4);
\end{tikzpicture}
\end{subfigure}
\begin{subfigure}[b]{0.15\textwidth}
\centering
\includegraphics[height=2cm]{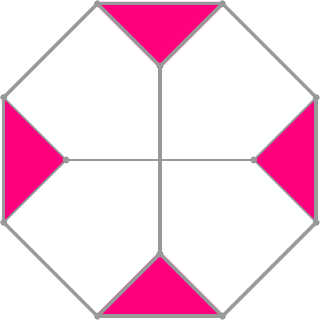} \\ \vspace{.3cm}

\begin{tikzpicture}
\draw[fill]
(0,0) circle [radius = .07]
(0.9,0) circle [radius = .07]
(1.8,0) circle [radius = .07]
;
\draw
(0,0) -- (0.9,0) -- (1.8,0); 
\draw (0,0) circle [radius = .15];
\draw (0.9,0) circle [radius = .15];
\draw [color=pink] (0.7,-0.2) rectangle ++ (1.3,0.4);
\end{tikzpicture}
\end{subfigure}
\begin{subfigure}[b]{0.15\textwidth}
\centering
\includegraphics[height=2cm]{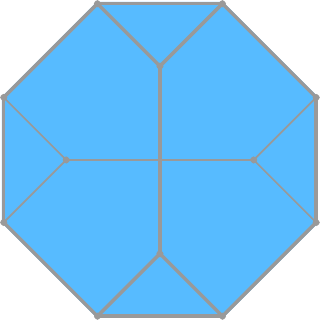} \\ \vspace{.3cm}

\begin{tikzpicture}
\draw[fill]
(0,0) circle [radius = .07]
(0.9,0) circle [radius = .07]
(1.8,0) circle [radius = .07]
;
\draw
(0,0) -- (0.9,0) -- (1.8,0); 
\draw (0,0) circle [radius = .15];
\draw (0.9,0) circle [radius = .15];
\draw [color=azure] (-0.2,-0.2) rectangle ++ (2.2,0.4);
\end{tikzpicture}
\end{subfigure}
\caption{\label{fig:faceorbits}The six face orbits for $P_{A_3}(\lambda_1+\lambda_2)$ correspond to the subsets $J \subseteq [3]$ not having $\{3\}$ as a connected component : the twelve vertices $(\emptyset)$, six of the edges $(\{1\})$, the remaining twelve edges $(\{2\})$, the four hexagons $(\{1,2\})$, the four triangles $(\{2,3\})$, and the whole polytope $(\{1,2,3\})$.} 
\end{figure}

\begin{ex}
Let $\Phi=A_3$ and $I = \{3\}$. Consider the weight polytope $P=P_{A_3}(\lambda_1+\lambda_2)$ of $\lambda_1 + \lambda_2$, whose isotropy group is $W_{\{3\}}=\{e,s_3\}$. 
Theorem \ref{thm:weightfaces} asserts that there is an $S_4$--orbit of faces of $P$ for each subset $J \subseteq [3]$ such that $\{3\}$ does not contain a connected component of $\Gamma|_J$. These are shown in Figure \ref{fig:faceorbits}.
\end{ex}

\begin{cor}\label{cor:triangles}
The fundamental weight polytope $P_\Phi(\myomega_i)$ only has triangular 2-faces if and only if the edges adjacent to $i$ in the Dynkin diagram are unlabeled; that is, the Dynkin diagram 
$\Gamma|_{N(i) \cup i}$ 
is simply laced.
\end{cor}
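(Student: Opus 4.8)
The plan is to read the $2$-faces of $P_\Phi(\myomega_i)$ directly off the face classification of Theorem \ref{thm:weightfaces} and then to recognize each of them as a rank-two weight polytope, which is a polygon whose number of sides is controlled by a single Dynkin label. To set up the theorem, I would first record that $\myomega_i$ lies in the relative interior of the cone $C_I$ with $I = \Delta \setminus \{i\}$: since $\langle \myomega_i, \alpha_j^\vee\rangle = \delta_{ij}$, the reflection $s_j$ fixes $\myomega_i$ for every $j \neq i$ while $s_i$ does not, so by Theorem \ref{thm:isotropy} the isotropy group of $\myomega_i$ is exactly $W_{\Delta \setminus \{i\}}$. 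This places us in the situation of Theorem \ref{thm:weightfaces} with $\Delta - I = \{i\}$.

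Next I would specialize the theorem to two-dimensional faces. It asserts that the $W$-orbits of $2$-faces of $P_\Phi(\myomega_i)$ correspond to the subsets $J \subseteq \Delta$ with $|J| = 2$ such that no connected component of $\Gamma|_J$ is contained in $\Gamma|_I = \Gamma|_{\Delta \setminus \{i\}}$. Because $\Gamma|_I$ omits only the vertex $i$, a component of $\Gamma|_J$ fails to sit inside $\Gamma|_I$ precisely when it contains $i$; as distinct components are disjoint, this forces $\Gamma|_J$ to be connected and to contain $i$. Hence $J = \{i,j\}$ with $i$ and $j$ adjacent, i.e. $j \in N(i)$, and there is exactly one orbit of $2$-faces for each neighbor $j$ of $i$. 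The theorem further identifies the corresponding face, up to combinatorial equivalence, with $P_{\Phi_{\{i,j\}}}\big(\sum_{k \in (\Delta - I) \cap J} \myomega_k\big) = P_{\Phi_{\{i,j\}}}(\myomega_i)$, the weight polytope of $\myomega_i$ in the rank-two parabolic root system $\Phi_{\{i,j\}}$.

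Finally I would compute this rank-two polytope. Since $i$ and $j$ are adjacent, $\Phi_{\{i,j\}}$ is irreducible of rank two, so $W_{\{i,j\}}$ is the dihedral group $I_2(m_{ij})$ of order $2m_{ij}$. From $\langle \myomega_i, \alpha_j^\vee\rangle = 0$ the weight $\myomega_i$ lies on the reflecting line $H_{\alpha_j}$ but, being a nonzero point on a boundary ray of the fundamental chamber, lies on no other reflecting line of $W_{\{i,j\}}$; hence its stabilizer is $\langle s_j\rangle$ of order two. Its orbit therefore consists of $2m_{ij}/2 = m_{ij}$ points, so $P_{\Phi_{\{i,j\}}}(\myomega_i)$ is combinatorially an $m_{ij}$-gon. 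Thus the $2$-face attached to the neighbor $j$ is a triangle exactly when $m_{ij} = 3$, that is, when the edge $\{i,j\}$ of the Dynkin diagram is unlabeled. Letting $j$ range over $N(i)$, every $2$-face of $P_\Phi(\myomega_i)$ is a triangle if and only if every edge incident to $i$ is unlabeled, i.e. $\Gamma|_{N(i) \cup i}$ is simply laced, which is the assertion.

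The step I expect to require the most care is the rank-two orbit count: one must confirm that $\myomega_i$ meets exactly one reflecting hyperplane of $W_{\{i,j\}}$, so that its stabilizer is genuinely of order two and the polygon has $m_{ij}$ rather than $2m_{ij}$ vertices, and one must note that the ``combinatorial equivalence'' furnished by Theorem \ref{thm:weightfaces} already suffices to transport the property of having triangular $2$-faces. Both points are routine but worth spelling out.
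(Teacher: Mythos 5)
Your proof is correct and follows essentially the same route as the paper: both apply Theorem \ref{thm:weightfaces} with $I = \Delta - \{i\}$ to identify the orbits of $2$-faces with the neighbors $j \in N(i)$, and then recognize each such face as a rank-two weight polytope that is a triangle exactly when $m_{ij}=3$. The only difference is that you spell out the dihedral orbit count showing that this face is an $m_{ij}$-gon (and verify the isotropy-group hypotheses), details the paper leaves implicit.
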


\begin{proof}
By Theorem \ref{thm:weightfaces} applied to $I = \Delta - i$, each 2-face of $P_\Phi(\myomega_i)$ corresponds to a subset $J=\{i,j\}$ of $\Delta$ such that $i$ and $j$ are neighbors in $\Gamma$. That 2-face is combinatorially equivalent to the weight polytope $P_{\Phi_{\{i,j\}}}(\lambda_j)$, which is a triangle if and only if $m_{ij}=3$, as desired.
\end{proof}

\subsection{\textsf{Symmetric $\Phi$-submodular functions}}\label{sec:symmPhisubmodularcone}

Say a $\Phi$-submodular function $f$ is \emph{symmetric} if it is invariant under the action of the Weyl group; that is, if
\[
f(w \myomega_i) = f(\myomega_i) \qquad \textrm{ for all } w \in W \textrm{ and } 1 \leq i \leq d.
\]
These functions correspond to the support functions of the weight polytopes of Section \ref{sec:weight}. They form the \emph{symmetric $\Phi$-submodular cone}, a linear slice of $\textrm{Def}(\Sigma_\Phi)$.

By identifying a symmetric $\Phi$-submodular function with its values on the fundamental weights, we may think of this cone as living in $\RR^d$. We now show that this cone has an elegant description: it is the simplicial cone generated by the rows of the inverse of the Cartan matrix. 
This inverse matrix was first described by Lusztig and Tits \cite{LusztigTits}; an explicit list 
 is given in \cite{HumphreysLie, WeiZou}.

A key role is played by the fundamental weight polytopes of the previous subsection. The results are a bit more elegant if we  rescale them and work with the coweight polytopes instead.

\begin{lem} \label{lem:inverseCartan}
The $\Phi$-submodular function $h_k$ of the fundamental coweight polytope $P_\Phi(\myomega^\vee_k)$ is
\[
h_k(w \myomega_i) = A^{-1}_{ki} \qquad \textrm{ for } w \in W, \quad  1 \leq i \leq d,
\]
where $A^{-1}$ is  the inverse of the Cartan matrix. 
\end{lem}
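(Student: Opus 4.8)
The plan is to evaluate the support function $h_k$ straight from its definition as a maximum of a linear functional over the vertices of the coweight polytope, and then to collapse that maximum to a single inner product using the $W$-invariance of the polytope together with the classical fact that a dominant weight maximizes a dominant functional over its $W$-orbit.

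First I would recall that $P_\Phi(\myomega^\vee_k) = \operatorname{conv}\{u\,\myomega^\vee_k : u \in W\}$, so that by the definition \eqref{eq:h_P} of the support function, and since a linear functional attains its maximum over a polytope at a vertex,
\[
h_k(w\myomega_i) = \max_{v \in P_\Phi(\myomega^\vee_k)} \langle w\myomega_i, v\rangle = \max_{u\in W}\langle w\myomega_i,\, u\,\myomega^\vee_k\rangle.
\]
Because $W$ acts on $V$ by isometries we have $\langle w\myomega_i, u\,\myomega^\vee_k\rangle = \langle \myomega_i, w^{-1}u\,\myomega^\vee_k\rangle$, and as $u$ ranges over $W$ so does $w^{-1}u$; hence the value does not depend on $w$ and
\[
h_k(w\myomega_i) = \max_{u\in W}\langle \myomega_i,\, u\,\myomega^\vee_k\rangle.
\]
This simply reflects the general fact that the support function of a $W$-invariant polytope is itself $W$-invariant.

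Next I would bring in the one genuine reflection-group input. Both $\myomega_i$ and $\myomega^\vee_k$ lie in the closed fundamental chamber $D$: indeed $\langle \myomega^\vee_k,\alpha_j\rangle = \delta_{kj}\geq 0$, and using $\alpha_j = \tfrac12\langle\alpha_j,\alpha_j\rangle\,\alpha_j^\vee$ we get $\langle \myomega_i,\alpha_j\rangle = \tfrac12\langle\alpha_j,\alpha_j\rangle\,\delta_{ij}\geq 0$, so each is dominant. The standard fact (see \cite{reflection}) that for a dominant $\mu$ every orbit element satisfies $\mu - u\mu \in \sum_j \RR_{\geq 0}\,\alpha_j$ then yields $\langle \myomega_i,\, \myomega^\vee_k - u\,\myomega^\vee_k\rangle \geq 0$ for every $u\in W$, since $\langle \myomega_i,\alpha_j\rangle\geq 0$. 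Therefore the maximum above is attained at $u = e$.

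Finally I would read off the value via the duality relations: the maximum equals $\langle \myomega_i,\myomega^\vee_k\rangle = \langle \myomega^\vee_k,\myomega_i\rangle = A^{-1}_{ki}$ by \eqref{Ainverse} and the symmetry of the inner product, which is exactly the claimed formula. The only step that is more than bookkeeping is the dominance inequality used to locate the maximizer; I expect this to be the main (and essentially only) obstacle, although it is entirely classical and can be cited directly. Everything else is a direct manipulation of the dual bases $\{\myomega_i\}$, $\{\myomega_i^\vee\}$, $\{\alpha_i\}$, $\{\alpha_i^\vee\}$.
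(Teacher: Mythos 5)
Your proof is correct, and its skeleton matches the paper's: reduce to $w=e$ by $W$-invariance of the support function, show that the maximizer over the orbit is the dominant vertex $\myomega^\vee_k$, and read off $\langle \myomega^\vee_k,\myomega_i\rangle = A^{-1}_{ki}$ from \eqref{Ainverse}. Where you differ is in the justification of the crucial middle step. The paper argues polyhedrally: for $x$ in the interior of the fundamental chamber $D$, the $x$-maximal face of $P_\Phi(\myomega^\vee_k)$ is the vertex $\myomega^\vee_k$, and since $\myomega_i$ lies in the closure of $D$, the $\myomega_i$-maximal face must contain that vertex --- a normal-fan containment argument that leans on the structure of weight polytopes already established in Proposition \ref{prop:orbit} and Corollary \ref{cor:specialweightpolytopes}. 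You instead invoke the classical dominance lemma for finite reflection groups: for dominant $\mu$ one has $\mu - u\mu \in \sum_j \RR_{\geq 0}\alpha_j$ for all $u\in W$, which, paired against the dominant functional $\myomega_i$, pins the maximum at $u=e$. Your route is self-contained at the level of root-system combinatorics and needs no information about the face structure or normal fan of the weight polytope, at the cost of importing one classical fact from \cite{reflection}; the paper's route stays entirely inside the polyhedral machinery it has already built. Both arguments are valid for arbitrary finite (not necessarily crystallographic) root systems, and your verification that $\myomega_i$ and $\myomega^\vee_k$ are both dominant, as well as the final identification with the inverse Cartan matrix entry, are all correct.
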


\begin{proof}
Let $x$ be in the interior of the fundamental domain $D$. Since $\myomega_i \in D$, the $\myomega_i$-maximal face of $P_\Phi(\myomega^\vee_k)$ must contain the $x$-maximal face of $P_\Phi(\myomega^\vee_k)$, which is the vertex $\myomega_k$.
It follows that the $\myomega_i$-maximal value of $P_\Phi(x)$ is $h_k(\myomega_i) = \langle \myomega^\vee_k, \myomega_i\rangle = A^{-1}_{ki}$ by \eqref{Ainverse}. By $W$-symmetry, this is also the value of $h_k(w \myomega_i)$ for any $w \in W$. 
\end{proof}

\begin{prop}\label{prop:sym}
The symmetric $\Phi$-submodular cone is the simplicial cone generated by the rows of the inverse Cartan matrix of $\Phi$.
\end{prop}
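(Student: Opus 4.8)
The plan is to specialize the local $\Phi$-submodularity inequalities of Theorem \ref{thm:main}(1) to the symmetric setting and then recognize the resulting linear system as the one governed by the Cartan matrix. First I would record that a symmetric $\Phi$-submodular function $f$ is completely determined by the vector $(f_1, \ldots, f_d) := (f(\myomega_1), \ldots, f(\myomega_d)) \in \RR^d$, since $W$-invariance forces $f(w\myomega_i) = f_i$ for every $w \in W$. Thus the symmetric cone genuinely lives in $\RR^d$, and it suffices to determine which vectors $(f_1,\dots,f_d)$ arise.

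Next I would substitute this into the local inequalities \eqref{ineq:localsubmod}. For a pair $(w, s_i)$, all three kinds of terms lie in a single $W$-orbit: both $w\myomega_i$ and $ws_i\myomega_i$ are conjugate to $\myomega_i$, while $w\myomega_j$ is conjugate to $\myomega_j$. Hence, by symmetry, the inequality collapses, independently of $w$, to
\[
2f_i \geq \sum_{j \in N(i)} -A_{ji}\, f_j ,
\]
using $h(w\myomega_i) = h(ws_i\myomega_i) = f_i$. Since $A_{ii} = 2$ and $A_{ji} = 0$ whenever $j \notin N(i) \cup \{i\}$, rearranging yields $\sum_{j=1}^d A_{ji} f_j \geq 0$, that is, $(A^T f)_i \geq 0$. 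Therefore the symmetric $\Phi$-submodular cone is exactly $\{f \in \RR^d : A^T f \geq 0\}$, the inequalities being componentwise.

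Finally I would invert this description. Setting $g = A^T f$, the condition $g \geq 0$ together with $f = (A^T)^{-1} g = (A^{-1})^T g$ exhibits $f$ as the nonnegative combination $f = \sum_{k=1}^d g_k\, r_k$, where $r_k$ denotes the $k$-th row of $A^{-1}$ (equivalently, the $k$-th column of $(A^{-1})^T$). Conversely, every such nonnegative combination satisfies $A^T f \geq 0$. Hence the cone is precisely the set of nonnegative combinations of the rows of $A^{-1}$, and since $A$ is invertible these $d$ rows are linearly independent, so the cone is simplicial, as claimed. As a consistency check, Lemma \ref{lem:inverseCartan} already identifies the $k$-th row of $A^{-1}$ with the (symmetric) support function of the fundamental coweight polytope $P_\Phi(\myomega^\vee_k)$, confirming directly that each proposed generator indeed lies in the cone.

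I expect the only delicate point to be the collapse step: one must verify that under $W$-invariance all $|W|$ local inequalities attached to a fixed index $i$ reduce to the single inequality $(A^T f)_i \geq 0$, and that no further constraints from the global inequalities \eqref{ineq:Batyrev} are needed. The latter is guaranteed by the equivalence of the local and global formulations in Theorem \ref{thm:main}, so once the symmetric specialization is carried out carefully the remaining argument is elementary linear algebra.
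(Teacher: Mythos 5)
Your proof is correct, but it takes a genuinely different route from the paper's. The paper argues geometrically: it invokes Proposition \ref{prop:orbitsum} (every weight polytope is a Minkowski sum $\sum_k a_k P_\Phi(\myomega_k^\vee)$ of fundamental coweight polytopes) together with Lemma \ref{lem:inverseCartan} (the support function of $P_\Phi(\myomega_k^\vee)$ is exactly the $k$-th row of $A^{-1}$), so that additivity of support functions under Minkowski sums exhibits every symmetric $\Phi$-submodular function as a nonnegative combination of the rows of $A^{-1}$. You instead work purely with the inequality description: specializing the local inequalities \eqref{ineq:localsubmod} to a $W$-invariant $h$ collapses, via $A_{ii}=2$ and the sparsity of $A$, the whole family of inequalities for fixed $i$ to the single condition $(A^T f)_i \geq 0$, and inverting $A^T$ then yields the generators. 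Each approach has its merits. Yours is more elementary and self-contained given Theorem \ref{thm:main} (indeed it matches the introduction's claim that the description follows \emph{directly} from that theorem), it yields the facet description $\{f : A^T f \geq 0\}$ of $\SF_\Phi^{\operatorname{sym}}$ alongside the generator description, and it sidesteps the identification of symmetric submodular functions with support functions of weight polytopes, which the paper uses without detailed justification. The paper's route, in exchange, attaches a geometric meaning to each generator --- it is the support function of a fundamental coweight polytope --- an identification that the paper exploits again later (e.g., in the proofs of Theorems \ref{thm:facet} and \ref{thm:orbit}). One small point to keep in mind in your write-up: the collapse step tacitly uses that the orbits $W\myomega_1,\ldots,W\myomega_d$ are disjoint, so that a symmetric function is indeed specified by the $d$ values $f_1,\ldots,f_d$; this is stated in the paper's definition of $\R_\Phi$ and is worth citing explicitly.
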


\begin{proof}
Proposition \ref{prop:orbitsum} shows that any weight polytope is a Minkowski sum of the fundamental coweight polytopes $P_\Phi(\myomega^\vee_k)$.
Since the support function of a Minkowski sum $aP + bQ$ is given by
$h_{aP+bQ} = ah_P + bh_Q$ for $a, b \geq 0$, this means that any symmetric $\Phi$-submodular function is a non-negative combination of the functions described in Lemma \ref{lem:inverseCartan}.
Since $A^{-1}$ is invertible, these functions are linearly independent. The desired result follows.
\end{proof}

\section{\textsf{Facets of the $\Phi$-submodular cone}}\label{sec:facets}

In this section we describe and enumerate the facets of the $\Phi$-submodular cone. We first prove that all the wall crossing inequalities define facets; for an arbitrary polytope, this is rarely the case. This claim is equivalent to saying that all the rays spanned by the $I_{\tau}$s, as described in \eqref{ineq:wallcross}, are extremal in the Mori cone 
$\overline{NE}(\Sigma_\Phi) = \textrm{cone}(I_{\tau} \, : \, \tau \textrm{ is a wall of } \Sigma_\Phi)$ in $\left(\operatorname{PL}(\Sigma_\Phi)\right)^\vee$.

\begin{theorem}\label{thm:facet}
Every local $\Phi$-submodular inequality \eqref{ineq:localsubmod} is a facet of the $\Phi$-submodular cone. 
\end{theorem}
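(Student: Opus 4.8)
The plan is to prove the facet claim by producing, for the wall $\tau = wH_i$, a single generalized $\Phi$-permutohedron whose support function $h_0$ lies in the relative interior of the face $F_\tau := \df(\Sigma_\Phi)\cap\{I_\tau = 0\}$ exposed by the inequality \eqref{ineq:localsubmod}. Concretely, I would construct $h_0\in\df(\Sigma_\Phi)$ with $I_\tau(h_0)=0$ while $I_{\tau'}(h_0)>0$ for every wall $\tau'$ whose wall-crossing functional is not a scalar multiple of $I_\tau$. Since $\df(\Sigma_\Phi)$ is full-dimensional in $\PL(\Sigma_\Phi)\cong\RR^\R$, the minimal face containing such an $h_0$ is cut out by exactly the active functionals at $h_0$; if these all coincide with $I_\tau$ (rank one), that face has codimension one and equals $F_\tau$, so $F_\tau$ is a facet. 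As a bonus, the same $h_0$ records precisely which walls are forced to vanish with $\tau$, which is exactly the input needed for Theorem \ref{thm:count}.

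To build $h_0$ I would start from the support function $h_{\Pi_\Phi}$ of the standard permutohedron, which is strictly convex; by $W$-invariance each value $I_{\tau'}(h_{\Pi_\Phi})=c^{(k)}>0$ depends only on the type $k$ of the wall $\tau'$. I then move in the direction $\delta := \chi_{w\myomega_i} + \chi_{ws_i\myomega_i}$, where $\chi_\mu\in\PL(\Sigma_\Phi)$ is the piecewise linear function taking value $1$ on the ray $\mu$ and $0$ on every other ray, and set $h_0 := h_{\Pi_\Phi} - t^\ast\,\delta$ with $t^\ast := \tfrac12\,c^{(i)}$. Reading coefficients off the explicit inequality \eqref{ineq:localsubmod}, the quantity $I_{\tau'}(\delta)$ is the number of the two rays $\rho,\rho'$ of $\tau'$ not lying on $\tau'$ (its apex rays) that belong to $\{w\myomega_i, ws_i\myomega_i\}$, corrected by the negative Cartan contributions whenever one of these rays occurs instead as a neighbour ray of $\tau'$.

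The decisive structural input is that $\{w\myomega_i, ws_i\myomega_i\}\subseteq W\myomega_i$, whereas the neighbour rays of any type-$i$ wall lie in the orbits $W\myomega_j$ with $j\in N(i)$, and distinct fundamental weights lie in distinct $W$-orbits. Consequently: for a wall $\tau'$ of type $k\neq i$ no apex ray can lie in $\{w\myomega_i,ws_i\myomega_i\}$, so $I_{\tau'}(\delta)\le 0$ and $I_{\tau'}(h_0)\ge c^{(k)}>0$; for a wall of type $i$ the neighbour rays never contribute, so $I_{\tau'}(\delta)\in\{0,1,2\}$ simply counts matching apex rays. The walls with $I_{\tau'}(\delta)=2$ are exactly those whose apex pair is $\{w\myomega_i, ws_i\myomega_i\}$, and a short computation with \eqref{eq:wall} and the isotropy statement of Theorem \ref{thm:isotropy} shows that this forces $w^{-1}w'\in W_{[d]-N(i)}$, hence $I_{\tau'}=I_\tau$; these therefore reach value $0$ simultaneously at $t^\ast$. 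The type-$i$ walls matching only one apex ray have $I_{\tau'}(\delta)=1$ and $I_{\tau'}(h_{\Pi_\Phi})=c^{(i)}$, so they vanish only at $t=c^{(i)}=2t^\ast>t^\ast$ and stay strictly positive at $h_0$.

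Assembling these facts gives $h_0\in\df(\Sigma_\Phi)$ vanishing exactly on the walls with $I_{\tau'}=I_\tau$ and strictly positive on all others, which completes the facet argument. The step where I expect to spend the most care — and the main obstacle — is the bookkeeping in the third paragraph: verifying that the only functionals degenerating at the critical parameter $t^\ast$ are those genuinely equal to $I_\tau$, and in particular that no type-$i$ wall sharing a single apex ray collapses prematurely. This reduces to the orbit-disjointness of the fundamental weights together with the sparse sign pattern of the Cartan matrix in \eqref{ineq:localsubmod}, so the whole verification is a finite, local computation in the star of the wall $\tau$.
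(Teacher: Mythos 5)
Your proof is correct, but it follows a genuinely different route from the paper's. The paper argues $W$-equivariantly: by Corollary \ref{cor:specialweightpolytopes}.3 the support function of the fundamental weight polytope $P_\Phi(\myomega_i)$ vanishes on the functionals $I_{w'H_i}$ for \emph{all} $w'\in W$ at once and is strictly positive on every other wall, so the whole orbit $\{I_{w'H_i} : w'\in W\}$ spans a face of the Mori cone $\overline{NE}(\Sigma_\Phi)$; since every face of a strongly convex cone contains an extremal ray and $W$ permutes this orbit transitively, each $I_{w'H_i}$ is extremal, which by Mori--Nef duality is precisely the facet statement. You instead isolate one wall $\tau=wH_i$ and construct the asymmetric deformation $h_0=h_{\Pi_\Phi}-\tfrac12 c^{(i)}\bigl(\chi_{w\myomega_i}+\chi_{ws_i\myomega_i}\bigr)$, which lies in the relative interior of $\df(\Sigma_\Phi)\cap\{I_\tau=0\}$ and so certifies the facet directly, with no appeal to the Mori cone or to symmetry. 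Your bookkeeping does check out: orbit-disjointness of the conjugates of distinct fundamental weights separates apex from neighbour contributions exactly as you claim, and in the crucial two-apex case the identity $w'(\myomega_i+s_i\myomega_i)=w(\myomega_i+s_i\myomega_i)$, together with the fact that $\myomega_i+s_i\myomega_i=\sum_{j\in N(i)}(-A_{ji})\myomega_j$ lies in the relative interior of the cone $C_{[d]-N(i)}$, forces $w^{-1}w'\in W_{[d]-N(i)}$ by Theorem \ref{thm:isotropy}, so the neighbour rays agree ray-by-ray and $I_{\tau'}=I_\tau$. The trade-off: the paper's argument is shorter and needs no case analysis, but it establishes extremality only for the whole orbit collectively; yours is more laborious but constructive, and it identifies along the way exactly which pairs $(w',s_i)$ yield the same functional---which is the input Theorem \ref{thm:count} needs anyway, so your single computation makes the facet description and its enumeration self-contained.
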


\begin{proof}
By Corollary \ref{cor:specialweightpolytopes}.3 we can produce, for each $1 \leq i \leq d$, a generalized $\Phi$-permutohedron $Q_i=P_{\Phi}(\myomega_i)$ whose normal fan is obtained from $\Sigma_\Phi$ by removing the walls $wH_i$ separating chambers $w  D$ and $ws_i  D$ for all $w\in W$. The support function of this polytope satisfies
\begin{eqnarray}
I_{\tau}(h_{Q_i}) &=& 0, \quad \textrm{ if } \tau= wH_i \textrm{ for some } w\in W, \textrm{ and }  \label{eq:trick}\\
I_{\tau}(h_{Q_i}) &>& 0, \quad \textrm{ otherwise}.
\end{eqnarray}
This means that the set of rays $\{I_{wH_i} \, : \, w\in W\}$ form a face $F_i$  of the Mori cone, so at least one of them must be extremal. But these rays form an orbit of the action of $W$ on the Mori cone, so if one of them is extremal, all are extremal.
\end{proof}


\begin{theorem}\label{thm:count}
The number of facets of the $\Phi$-submodular cone is 
\[
\displaystyle \sum_{i=1}^d \dfrac{|W|}{|W_{[d] -  N(i)}|},
\]
where $N(i)$ is the set of neighbors of $i$ in the Dynkin diagram. They come in $d$ symmetry classes up to the action of $W$.
For the classical root systems, these numbers are:
\begin{eqnarray*}
A_{d-1} &:& d(d-1)2^{d-3} \\
BC_d &:& 2d(d-1)3^{d-2} + d2^{d-1} \\
D_d &:& 2d(d-1)3^{d-2} - d(d-1)2^{d-2}
\end{eqnarray*}

\end{theorem}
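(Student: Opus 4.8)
The plan is to reduce the enumeration to a question about parabolic cosets by first determining exactly when two wall-crossing functionals cut out the same facet. By Theorem \ref{thm:facet} every functional $I_{wH_i}$ is extremal, so the facets of the $\Phi$-submodular cone are precisely the distinct rays among the $I_{wH_i}$ for $w \in W$ and $1 \le i \le d$. Since $\Sigma_\Phi$ is simplicial I may identify $\PL(\Sigma_\Phi)^\vee$ with $(\RR^\R)^\vee$, equipped with the basis of evaluation functionals $\{\delta_\mu : \mu \in \R\}$; in this basis the relation \eqref{eq:wall} reads
\[
I_{wH_i} = \delta_{w\myomega_i} + \delta_{w s_i \myomega_i} + \sum_{j \in N(i)} A_{ji}\, \delta_{w\myomega_j}.
\]
Because the $W$-action permutes the $\delta_\mu$ compatibly with its action on walls (this equivariance is already used in the proof of Theorem \ref{thm:facet}), one has $I_{wH_i} = w \cdot I_{H_i}$, so the equality $I_{wH_i} = I_{w'H_{i'}}$ reduces, after applying $w^{-1}$ and writing $v = w^{-1}w'$, to the single condition $I_{H_i} = I_{vH_{i'}}$.

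The heart of the argument is to show that this holds if and only if $i = i'$ and $v \in W_{[d] - N(i)}$. First note that every positive coefficient appearing in such a functional equals $1$ while the rest equal the negative constants $A_{ji} < 0$; matching a coefficient-$1$ term forces the scaling factor to be $1$, so the functionals must be literally equal. Comparing coefficients orbit by orbit and using that the orbits $W\myomega_1, \ldots, W\myomega_d$ are pairwise disjoint (each $\mu \in \R$ equals $w\myomega_k$ for a unique $k$), the two coefficient-$1$ terms $\delta_{\myomega_i} + \delta_{s_i\myomega_i}$ lie in the orbit $W\myomega_i$ and must be supplied by the orbit $W\myomega_{i'}$, forcing $i = i'$; and for each $j \in N(i)$ the single term $A_{ji}\delta_{\myomega_j}$ must match $A_{ji}\delta_{v\myomega_j}$, forcing $v\myomega_j = \myomega_j$. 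Thus $v$ fixes every $\myomega_j$ with $j \in N(i)$, which are exactly the generators of the face $C_{[d]-N(i)} = \cone(\myomega_j : j \in N(i))$ of the fundamental chamber; by Theorem \ref{thm:isotropy} the pointwise stabilizer of this face is $W_{[d]-N(i)}$, so $v \in W_{[d]-N(i)}$. Conversely, each generator $s_k$ of $W_{[d]-N(i)}$ (with $k = i$, or $k$ non-adjacent to $i$) fixes every $\myomega_j$ for $j \in N(i)$ and preserves the pair $\{\myomega_i, s_i\myomega_i\}$, hence fixes $I_{H_i}$; so $v \in W_{[d]-N(i)}$ gives $I_{vH_i} = I_{H_i}$. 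This is the step I expect to be the main obstacle, since it depends essentially on the disjointness of the weight orbits and on the correct identification of the stabilizer.

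With the criterion in hand, the count is immediate: the facets of ``type $i$'' are in bijection with the coset space $W/W_{[d]-N(i)}$, on which $W$ acts transitively, so they number $|W|/|W_{[d]-N(i)}|$ and form a single $W$-orbit. Summing over $i$ yields the total $\sum_{i=1}^d |W|/|W_{[d]-N(i)}|$, organized into $d$ symmetry classes.

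Finally, to obtain the closed forms for the classical types I would evaluate this sum directly from the Dynkin diagram. For a node $i$, deleting $N(i)$ breaks the diagram into irreducible components, each a type-$A$ factor (a symmetric group) together with, in types $B/C/D$, one classical signed-permutation factor; reading $|W_{[d]-N(i)}|$ off these factors turns the contribution of node $i$ into a single term of shape $c\cdot 2^{i}\binom{d-2}{i-1}$ for a type-dependent constant $c$. In type $A_{d-1}$ each contribution is $\tfrac12 d(d-1)\binom{d-2}{i-1}$, and $\sum_i \binom{d-2}{i-1} = 2^{d-2}$ gives $d(d-1)2^{d-3}$; in types $B_d = C_d$ and $D_d$ the generic contributions carry the factor $2^{i}\binom{d-2}{i-1}$ and the binomial identity $\sum_m 2^m\binom{d-2}{m} = 3^{d-2}$ produces the main $3^{d-2}$ terms, while the few special nodes (the endpoints, the node meeting the labeled edge in $B/C$, and the trivalent node with its two leaves in $D$) supply the correction terms $d2^{d-1}$ and $-d(d-1)2^{d-2}$. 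This last part is routine but requires careful bookkeeping of the special nodes where the induced subdiagram departs from the generic shape.
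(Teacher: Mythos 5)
Your proposal is correct and follows essentially the same route as the paper: both reduce the count to showing that two pairs $(w,s_i)$ and $(w',s_{i'})$ give the same wall-crossing functional if and only if $i=i'$ and $w^{-1}w'\in W_{[d]-N(i)}$, proved via the isotropy theorem for the face $C_{[d]-N(i)}$ (forward direction) and by checking the generators $s_k$, $k\in[d]-N(i)$, fix the functional (converse), followed by coset counting and the Dynkin-diagram bookkeeping for the classical types. Your treatment is in fact slightly more explicit than the paper's on two points it leaves implicit --- ruling out nontrivial positive rescalings of the functionals, and using disjointness of the orbits $W\myomega_1,\ldots,W\myomega_d$ to force $i=i'$ --- but the underlying argument is the same.
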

\begin{proof}
We have one local $\Phi$-submodular inequality for each pair of an element $1 \leq i \leq d$ and a group element $w \in W$, but there are many repetitions. For each $i$ we now show that the set of elements $w$ stabilizing the wall-crossing inequality \eqref{ineq:wallcox} is $W_{[d]-N(i)}$.

If an element $w$ stabilizes \eqref{ineq:wallcox}, it must stabilize the support of the right hand side, that is, the set of fundamental weights $\{\myomega_j \, : \, j \in N(i)\}$. Therefore $w$ stabilizes the sum of those weights, which is in the interior of cone $C_{[d]-N(i)}$. By Proposition \ref{prop:stab}, $w \in W_{[d]-N(i)}$. 

Conversely, suppose $w \in W_{[d]-N(i)}$. Then for each $j \in N(i)$ we have $w \in W_{[d]-j}$, so $w$ stabilizes $\myomega_j$ individually. Therefore $w$ does stabilize the right hand side of \eqref{ineq:wallcox}. Now, each simple reflection $s_k$ with $k \notin [d]-N(i)-i$ stabilizes $\myomega_i$ because $k \neq i$, and hence it also stabilizes $s_i   \myomega_i$ since $s_i$ and $s_k$ commute. The remaining reflection $s_i$ interchanges $\myomega_i$ and $s_i  \myomega_i$. It follows that each generator of $W_{[d]-N(i)}$, and hence the whole parabolic subgroup, stabilizes the left-hand side of   \eqref{ineq:wallcox} as well.

We conclude that, for fixed $i$, each inequality in \eqref{ineq:wallcox} is repeated ${|W_{[d]-N(i)}|}$ times, and hence the number of different inequalities is ${|W|}/{|W_{[d]-N(i)}|}$. Furthermore, there is one symmetry class of inequalities for each $i$. The desired result follows. 

One may then compute explicitly the number of facets for the classical root systems, using that $|W_{A_{r-1}}| = r!$, $|W_{B_r}| = 2^rr!,$ and $|W_{D_r}| = 2^{r-1}r!$. 
\end{proof}

Notice that if $r$ and $r'$ are rays and $C$ and $C'$ are adjacent chambers of the Coxeter complex such that $r$ belongs to $C-C'$ and $r'$ belongs to $C'-C$, then the linear relation between the rays of $C$ and $C'$ 
is determined entirely by the rays $r$ and $r'$, independently of the choice of chambers $C$ and $C'$. This offers an explanation for the repetition of the wall-crossing inequalities. This property, which significantly simplifies the study of the deformation cone, holds for several interesting combinatorial fans; for instance, $g$-vector fans of Coxeter associahedra and normal fans of graph associahedra. \cite{Pilaud1, Pilaud2}

\section{\textsf{Some extremal rays of the $\Phi$-submodular cone}}\label{sec:rays}

On the opposite end of the facets, we now discuss the problem of describing the extremal rays of the $\Phi$-submodular cone $\operatorname{Nef}(\Sigma_\Phi)$.  These rays correspond to indeformable generalized $\Phi$-permutohedra.

\begin{defi}
We say a polytope $P$ is \emph{indecomposable} or \emph{indeformable} if its only deformations, in the sense of Definition \ref{defi:defo}, are its multiples (up to translation) \cite{shep}; that is, if its nef cone $\operatorname{Nef}(P)$ is a single ray.
\end{defi}


Describing \textbf{all} the extremal rays of $\operatorname{Nef}(\Sigma_\Phi)$ seems to be a very difficult task, even in the classical case $\Phi = A_{d-1}$. For example, the matroid polytope $P_M$ of any connected matroid $M$ on $[d]$ is a ray of $\operatorname{Nef}(\Sigma_\Phi)$. \cite{extreme, econ}. Therefore the number of rays of this nef cone is doubly exponential, because the asymptotic proportion of matroids that are connected is at least $1/2$ and conjecturally equal to $1$ \cite{Mayhewetal} and the number $m_d$ of matroids on $[d]$ satisfies $\log \log m_d \geq d - \frac32 \log d - O(1)$. \cite{Knuth} The cone $\operatorname{Nef}(\Sigma_{A_{d-1}})$ has been computed for $d \leq 5$; for $d=5$ it has only $80$ facets in six $S_5$ symmetry classes, while it has $117978$ rays in $1319$ $S_5$ symmetry classes. \cite{Mortonetal, Studeny}

We focus here on the more modest task of describing some interesting families of rays; \emph{i.e.}, indecomposable generalized $\Phi$-permutohedra.
Our main tools will be the following simple sufficiency criterion.
%
%
%

\begin{prop}\label{prop:alltriangles} \cite{shep}
If all $2$-faces of a polytope $P$ are triangles then $P$ is indecomposable.
\end{prop}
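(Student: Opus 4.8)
The plan is to run the classical summand argument of Shephard, built around the fact that a triangle is indecomposable. By Shephard's theorem (recalled above), $Q$ is a deformation of $P$ exactly when $\lambda P = Q + R$ for some polytope $R$ and scalar $\lambda > 0$, and Minkowski summands are compatible with taking faces: for every functional $u$ one has $(\lambda P)_u = Q_u + R_u$. Indecomposability of $P$ then amounts to showing that in any such decomposition $Q$ must be a homothet $\mu P + t$, allowing $\mu = 0$ (a point). This summand formulation is what I would lean on throughout, since it, unlike the edge-length description recalled earlier, requires no simplicity hypothesis.

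First I would record the behavior on edges. For an edge $e$ of $P$, choosing $u$ in the relative interior of its codimension-one normal cone, the relation $\lambda e = Q_u + R_u$ writes the segment $\lambda e$ as a sum of two faces, so $Q_u$ is a segment or a point parallel to $e$; since every edge of $P$ has positive length, I may define the ratio $g(e) := \operatorname{len}(Q_u)/\operatorname{len}(e) \in [0,\lambda]$. The crucial local step is triangle rigidity: for a $2$-face $T$, taking $u$ in the relative interior of its codimension-two normal cone gives $\lambda T = T_Q + T_R$, exhibiting $T_Q$ as a genuine summand of the triangle $\lambda T$. If the edge vectors of $T$ are oriented cyclically as $v_1, v_2, v_3$ then $v_1 + v_2 + v_3 = 0$ with $v_1, v_2$ independent, and any summand with edges $t_i v_i$ must close up, $\sum_i t_i v_i = 0$, forcing $t_1 = t_2 = t_3$; hence $T_Q$ is a point or a homothet of $T$, and in either case the three edges of $T$ share a common value of $g$.

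With all $2$-faces triangular, $g$ is therefore constant across any two edges spanning a common $2$-face, and to globalize I would introduce the graph $\Gamma$ on the edges of $P$ in which two edges are adjacent when they lie in a common $2$-face, and show $\Gamma$ is connected: the $1$-skeleton of $P$ is connected, and at each vertex $v$ the edges and $2$-faces incident to $v$ are the vertices and edges of the vertex figure $P/v$, whose $1$-skeleton is again connected. Thus $g \equiv \mu$ is a global constant. If $\mu = 0$ then $Q$ is a point; if $\mu > 0$ then no edge collapses, and the vertex map $\pi$ sending each vertex of $P$ to the vertex of $Q$ maximizing the same functionals satisfies $\pi(v') - \pi(v) = \mu(v' - v)$ across every edge, so connectivity of the $1$-skeleton gives $\pi(v) = \mu v + t$, and surjectivity of $\pi$ onto the vertices of $Q$ yields $Q = \mu P + t$. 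Either way $Q$ is a multiple of $P$ up to translation, so $\operatorname{Nef}(P)$ is a single ray.

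The main obstacle I anticipate is not the rigidity of the triangle, which is immediate, but rather treating polytopes that are \emph{not} simple — the octahedron being the basic example, with all $2$-faces triangular yet $4$-valent vertices. The edge-length parametrization of deformations cited earlier holds only in the simple case, so the care in the argument lies in replacing it by the face-compatibility of Minkowski summands and by the connectivity of vertex figures; these are precisely what substitute for simplicity and make the constant-ratio conclusion valid in full generality.
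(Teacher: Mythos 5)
The paper offers no proof of this proposition at all --- it simply cites Shephard --- and your argument is a correct reconstruction of the classical one: rigidity of triangles as Minkowski summands (any summand of a triangle is a point or a homothet, since no two edges of a triangle are parallel), propagated along the connected incidence graph of edges sharing $2$-faces, is precisely the route of the cited source. The few steps you leave implicit --- that $g(e)=0$ for every edge forces $Q$ to be a point, that $Q_u$ is a genuine vertex of $Q$ when $u$ lies in the interior of a chamber of $\Sigma_P$, and that $\pi$ is surjective onto the vertices of $Q$ --- all follow from the fact that the normal fan of a summand coarsens $\Sigma_P$, so they are routine.
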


We will also use the following computational tool:

\begin{rem}\label{rem:computational} 
To check computationally whether a polytope $P$ is indecomposable, one could in principle ``simply" compute the dimension of its deformation cone. Unfortunately, this is not easy to do in practice. When $P$ is a deformation of the $\Phi$-permutohedron $\Pi_\Phi$ (or some other polytope with a nice deformation cone) and we know its support function $h_P$, there is a shortcut available to us. Since Def$(P)$ is the intersection of Def$(\Pi_\Phi)$ with the facet-defining hyperplanes that contain $h_P$, we can now determine which wall-crossing inequalities \eqref{ineq:localsubmod} $h_P$ satisfies \textbf{with equality}. If, after modding out by globally linear functions, those wall-crossing equalities cut out a 1-dimensional subspace, then Def$(P)$ is just a ray, and the polytope $P$ is indecomposable.
\end{rem}

%
%

The following is our main result about rays of the $\Phi$-submodular cone.
Recall that $N(i)$ denotes the set of nodes in the Dynkin diagram $\Gamma(\Phi)$ adjacent to the node $i$.

\begin{theorem}\label{thm:orbit}
A weight polytope $P$ of a crystallographic root system $\Phi$ is indecomposable if and only if $P = k P_\Phi(\myomega_i)$ for $k>0$ and a fundamental weight $\myomega_i$ such that the edges adjacent to $i$ in the Dynkin diagram are unlabeled; that is, the Dynkin diagram $\Gamma(\Phi_{N(i) \cup i})$ is simply laced.  
\end{theorem}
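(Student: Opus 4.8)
The plan is to prove the two implications separately, with the reverse direction following quickly from earlier results. Suppose first that $P = kP_\Phi(\myomega_i)$ with $k>0$ and $\Gamma(\Phi_{N(i)\cup i})$ simply laced. Then Corollary \ref{cor:triangles} shows that every $2$-face of $P_\Phi(\myomega_i)$ is a triangle, so $P_\Phi(\myomega_i)$ is indecomposable by Proposition \ref{prop:alltriangles}; as indecomposability is preserved under positive dilation and translation, the same holds for $P = kP_\Phi(\myomega_i)$.

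For the forward direction, let $P = P_\Phi(x)$ be indecomposable. Replacing $x$ by a $W$-translate, I may assume $x$ lies in the relative interior of a face $C_{[d]-I}$ of the fundamental domain, so that $x = \sum_{i\in I} a_i\myomega_i$ with every $a_i>0$. By Proposition \ref{prop:orbitsum} this gives the Minkowski decomposition $P = \sum_{i\in I} a_i P_\Phi(\myomega_i)$. If $|I|\geq 2$, then by Corollary \ref{cor:specialweightpolytopes}.3 the summand $a_{i_0}P_\Phi(\myomega_{i_0})$ (for any $i_0\in I$) and its complement $\sum_{i\neq i_0} a_iP_\Phi(\myomega_i)$ have normal fans strictly coarser than that of $P$, hence are deformations of $P$ that are not positive multiples of it; this is a nontrivial decomposition, contradicting indecomposability. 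Therefore $|I|=1$ and $P = kP_\Phi(\myomega_i)$ for a single $i$.

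It remains to show that $P_\Phi(\myomega_i)$ is decomposable whenever some edge at $i$ is labeled, say $m_{ij}\in\{4,6\}$ for a neighbor $j\in N(i)$; here I use that $\Phi$ is crystallographic, so $m_{ij}\in\{3,4,6\}$. By Corollary \ref{cor:triangles} (via Theorem \ref{thm:weightfaces}) the polytope $P_\Phi(\myomega_i)$ then has a $2$-face $F$ that is an $m_{ij}$-gon, namely a square or a hexagon, whose own nef cone has dimension $m_{ij}-2\geq 2$. Since the all-triangles criterion of Proposition \ref{prop:alltriangles} is only sufficient, I must exhibit an honest deformation of $P_\Phi(\myomega_i)$ that is not a dilate of it. The plan is to invoke Remark \ref{rem:computational}: the deformation cone of $P_\Phi(\myomega_i)$ is obtained from $\df(\Sigma_\Phi)$ by imposing the wall-crossing equalities $I_{wH_j}(h)=0$ for all $w\in W$ and all $j\neq i$, and I will show that the resulting cone is at least two-dimensional modulo global linear functions. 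Concretely, I will build an asymmetric convex piecewise-linear support function by stretching the $m_{ij}$-gon $F$ unevenly---breaking the dihedral symmetry of $F$---and propagating this across the fan while keeping convexity at the retained walls $wH_i$.

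The main obstacle is this propagation step. The extra freedom visible on $F$ comes from its parallel edges, but the deformation cone of $P_\Phi(\myomega_i)$ is controlled by the global rigidity of its edge graph: an assignment of edge lengths must close up simultaneously around every $2$-face, and the faces share edges, so a local stretch of one $m_{ij}$-gon need not extend to a genuine polytope. The crux is therefore a rigidity (equivalently, a rank) computation showing that a single non-triangular $2$-face orbit contributes at least one degree of freedom beyond the one-parameter family of symmetric dilations; I expect to establish this either by producing an explicit globally consistent parallel redrawing or by bounding below the corank of the system $\{I_{wH_j} : w\in W,\ j\neq i\}$ on $\PL(\Sigma_\Phi)$. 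It is precisely at this stage that the crystallographic hypothesis enters, guaranteeing that the non-triangular $2$-faces are squares or hexagons with their concrete parallel-edge structure.
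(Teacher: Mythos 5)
The reverse direction and the reduction to a single fundamental weight polytope (via Proposition \ref{prop:orbitsum}, Corollary \ref{cor:triangles}, and Proposition \ref{prop:alltriangles}) are correct and match the paper. The genuine gap is in the remaining step: showing that $P_\Phi(\myomega_i)$ is decomposable whenever some edge at $i$ carries a label. You do not prove this; you only describe a plan --- ``stretching'' a square or hexagonal $2$-face and then ``propagating'' across the fan, with the crucial rank/rigidity bound on the system $\{I_{wH_j} : w\in W,\ j\neq i\}$ explicitly deferred (``I expect to establish this either by \dots or by \dots''). That deferred step is precisely the hard part, and it cannot be treated as routine: the paper's own remark following the theorem shows that $P_{H_3}(\myomega_2)$ and $P_{H_4}(\myomega_3)$ are indecomposable despite having non-triangular $2$-faces, so the presence of a non-triangular $2$-face orbit does \emph{not} in general contribute an extra degree of freedom to the deformation cone. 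Any argument along your lines must therefore exploit the square/hexagon structure in an essential, quantitative way, and you have not done so.

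The paper sidesteps this entirely by using the crystallographic classification: the only irreducible crystallographic diagrams with a labeled edge are $B_d$, $C_d$, $F_4$, $G_2$, so only finitely many families of fundamental weight polytopes need to be handled. For $\Phi = B_d$ or $C_d$ with $i=d$, the weight polytope is a hypercube, a Minkowski sum of $d$ segments; for $i = d-1$, one observes that the $W_{C_d}$-orbit of $\myomega_{d-1}$ equals its $W_{D_d}$-orbit, whence
\[
P_{C_d}(\myomega_{d-1}) \;=\; P_{D_d}\!\left(\tfrac{e_1+\cdots+e_{d-1}-e_d}{2}\right) + P_{D_d}\!\left(\tfrac{e_1+\cdots+e_{d-1}+e_d}{2}\right),
\]
an explicit nontrivial decomposition; the cases $F_4$ and $G_2$ are checked individually. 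To repair your proof you would either need to carry out the rank computation you postponed (a substantial piece of work, essentially a case analysis in disguise), or replace that step with explicit decompositions as the paper does.
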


\begin{proof}
Proposition \ref{prop:orbitsum} shows that if a weight polytope is indecomposable, it must be a multiple of $P_\Phi(\myomega_i)$ for some fundamental weight $\myomega_i$.  When the Dynkin diagram $\Gamma(N(i) \cup i)$ is simply laced, we showed in Corollary \ref{cor:triangles}
that all the $2$-faces of $P_\Phi(\myomega_i)$ are triangles, so this polytope is indecomposable by Proposition \ref{prop:alltriangles}.

To show that all other fundamental weight polytopes are decomposable, we do a case by case analysis through the classification \ref{thm:classification}.  The Dynkin diagrams that have nodes $i$ such that $N(i)\cup i$ has an edge with label greater than 3 are $B_d, C_d, F_4,$ and $G_2$ Only the types $B_d$ and $C_d$ provide infinite families of weight polytopes, so we prove our claim in these two cases. One checks the remaining cases $F_4, G_2$ individually.

\smallskip

$\Phi = B_d$ or $C_d$ and $i=d$: In this case $\myomega_d$ equals $\frac12(e_1 +e_2 + \cdots +e_d)$ and $e_1 +e_2 + \cdots +e_d$, respectively. In both cases the orbit polytope is a hypercube of dimension $d$, which is the Minkowski sum of $d$ lines, and hence decomposable. 

\smallskip

$\Phi = B_d$ or $C_d$ and $i=d-1$: We have $\myomega_{d-1} = e_1+e_2+\cdots+ e_{d-1}$ and the weight polytope is the same in types $B$ and $C$. Now we claim that the orbit of $e_1 + \cdots + e_{d-1}$ under the action of $W_{C_d}$ is the same as its orbit under the action of $W_{D_d}$. To see this, recall that $W_{C_d}$ acts by all permutations and sign changes of the coordinates, while $W_{D_d} \leq W_{C_d}$ consists of those actions where the number of sign changes is even. Therefore the $W_{C_d}$-orbit of $\myomega_{d-1}=e_1 + \cdots + e_{d-1}$ consists of the vectors $v=w\myomega_{d-1}$ with one coordinate equal to $0$ and all other coordinates equal to $1$ or $-1$. By adding a sign change to $w$ in the $0$ coordinate if needed, we can arrange for it to be an element of $D_d$, as desired.

This observation, combined with Proposition \ref{prop:orbitsum}, tells us that
\begin{eqnarray*}
P_{C_d}(e_1 + \cdots + e_{d-1}) &=&  P_{D_d}(e_1 + \cdots + e_{d-1}) \\
&=&
P_{D_d}\left(\frac{e_1 + \cdots + e_{d-1}-e_d}2\right) +
P_{D_d}\left(\frac{e_1 + \cdots + e_{d-1}+e_d}2\right) 
\end{eqnarray*}
keeping in mind that $\frac12(e_1 + \cdots + e_{d-1}-e_d)$ and $\frac12(e_1 + \cdots + e_{d-1}+e_d)$ are the last two fundamental weights of type $D$. These two polytopes are deformations of the $D_d$-permutohedron, which is itself a deformation of the $C_d$-permutohedron. Therefore the fundamental weight polytope $P_{C_d}(\myomega_{d-1})$ is decomposable in this case as well.
\end{proof}

\begin{rem}
By Proposition \ref{prop:alltriangles}, any face of the indecomposable weight polytopes is also indecomposable. These are also rays of the nef cone by Corollary \ref{cor:zonofaces}: in types $A_n, BC_n, D_n$, we get exponentially many such rays of the nef cone as a function of $n$.
\end{rem}


\begin{rem}
Theorem \ref{thm:orbit} can fail for non-crystallographic root systems. More precisely, it fails for the fundamental weight polytopes $P_{H_3}(\myomega_2)$ and $P_{H_4}(\myomega_3)$, which are indecomposable. We have verified this by computer as outlined in Remark \ref{rem:computational}.\footnote{The supporting files are available at \texttt{http://math.sfsu.edu/federico/Articles/deformations.html}.} 
By Corollary \ref{cor:specialweightpolytopes}.3, the support function $h_k$ for $P_{\Phi}(\myomega_k)$ lies precisely on the facet hyperplanes given by local $\Phi$-submodular conditions \ref{ineq:localsubmod} with $i \neq k$.
In each of these two cases, those hyperplanes intersect in a line, making the nef cone of $P_{\Phi}(\myomega_k)$ one-dimensional. 
\end{rem}

\begin{figure}[H]
\centering

\begin{subfigure}[b]{0.3\textwidth}
\centering
\includegraphics[height=25mm]{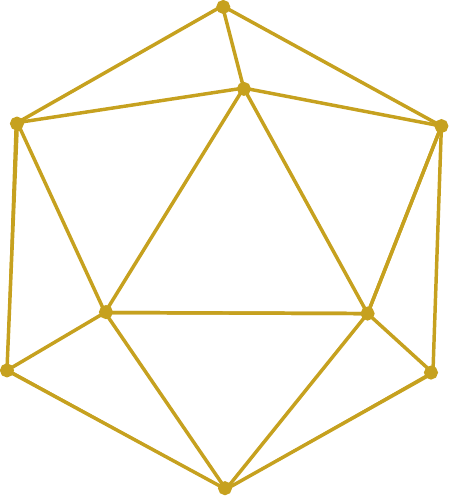}\qquad\qquad
\begin{tikzpicture}
\draw[fill]
(0,0) circle [radius = .08]
(1,0) circle [radius = .08]
(2,0) circle [radius = .08]
;
\draw
(0,0) -- (1,0) -- (2,0) node [midway, above] {5}
;
\draw (0,0) circle [radius = .15];
\end{tikzpicture}
\end{subfigure}
\begin{subfigure}[b]{0.3\textwidth}
\centering
\includegraphics[height=35mm]{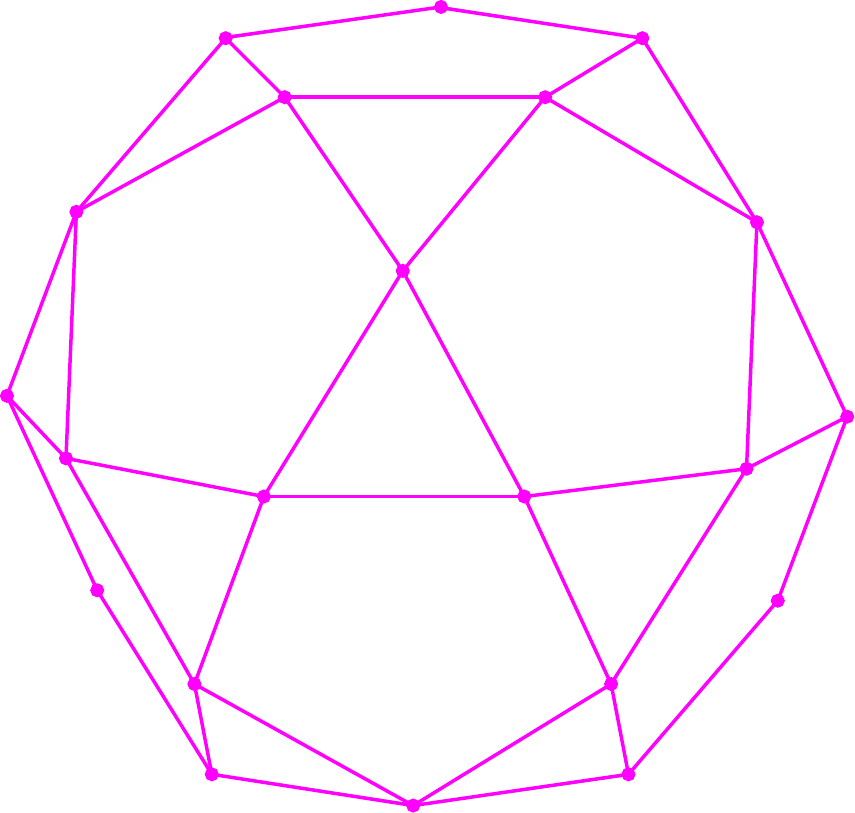}\qquad\qquad
\begin{tikzpicture}
\draw[fill]
(0,0) circle [radius = .08]
(1,0) circle [radius = .08]
(2,0) circle [radius = .08]
;
\draw
(0,0) -- (1,0) -- (2,0) node [midway, above] {5}
;
\draw (1,0) circle [radius = .15];
\end{tikzpicture}
\end{subfigure}
\begin{subfigure}[b]{0.3\textwidth}
\centering
\includegraphics[height=30mm]{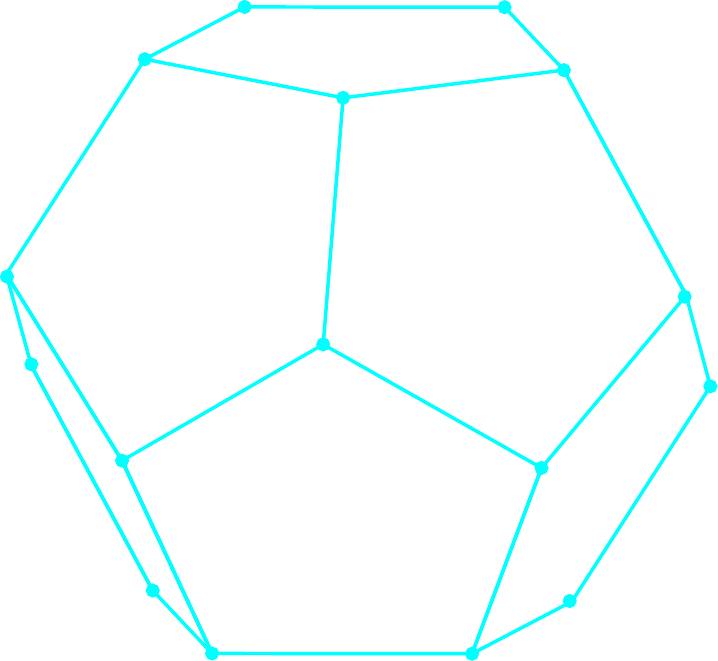}
\begin{tikzpicture}
\draw[fill]
(0,0) circle [radius = .08]
(1,0) circle [radius = .08]
(2,0) circle [radius = .08]
;
\draw
(0,0) -- (1,0) -- (2,0) node [midway, above] {5}
;
\draw (2,0) circle [radius = .15];
\end{tikzpicture}
\end{subfigure}
\qquad
\caption{The icosahedron $P_{H_3}(\myomega_1)$ is indecomposable because its $2$-faces are triangles. A computation shows that the icosidodecahedron $P_{H_3}(\myomega_2)$ is also indecomposable. The dodecahedron $P_{H_3}(\myomega_3)$ is decomposable because we can push away one of its pentagonal faces.} 
\end{figure}

Let us verify Theorem \ref{thm:orbit} for a few examples of interest.

\medskip

\noindent 1. (Type A)
The fundamental weight polytope $P_{A_d}(\myomega_i)$ is the  hypersimplex $\Delta(i,d+1) = \conv(e_S \, : \, S \subseteq [d+1], |S|=i)$ which only has triangular $2$-faces, and hence is indecomposable.

\medskip

\noindent 2. (Type BC)
In type $C_2$ the fundamental weight polytopes are the diamond and the square, which are indeed decomposable. This is consistent with the fact that the Dynkin diagram has no node satisfying the condition of Theorem \ref{thm:orbit}. 
In type $C_3$, they are shown in Figure \ref{fig:C3hypersimplices}. The octahedron is indeed indecomposable, while the rhombic dodecahedron is the Minkowski sum of two tetrahedra in opposite orientations, and the cube is the Minkowski sum of three segments.

%

%
%
%
%

\section{\textsf{Further questions and future directions.}}\label{sec:future}

\begin{enumerate}

\item
In type $A$, generalized permutohedra have the algebraic structure of a Hopf monoid; in fact, they are the universal family of polytopes that support such a structure \cite{aa}.  This leads to numerous interesting algebraic and combinatorial consequences. A crucial observation that makes this work is that for any generalized permutohedron $P$ in $\RR^E$ and any subset $\emptyset \subsetneq S \subsetneq E$, the maximal face of $P$ in direction $e_S$ decomposes naturally as the product of two generalized permutohedra in $\RR^S$ and $\RR^{E-S}$, respectively.

One of the main motivations for this project was the expectation that, similarly, generalized $\Phi$-permutohedra should be an important example of a new kind of algebraic structure: a \emph{Coxeter Hopf monoid} \cite{Rodriguez}.  It is still true that if $P$ is a generalized $\Phi$-permutohedron and $r=w\myomega_i$ is a ray, the maximal face of $P$ in direction $r$ is a generalized $\Phi_{[n]-i}$--permutohedron. It decomposes as a product of one, two, or three generalized Coxeter permutohedra, depending on the number of neighbors of $i$ in the Dynkin diagram. We plan to  further develop this algebraic structure in an upcoming paper.

\item
In the classical types $A_n$ and $BC_n$, the notions of $\Phi$-submodular functions correspond to submodular and bisubmodular functions, which are well studied in optimization \cite{Fujishige, Murota, Schrijver}. We expect that disubmodular functions should play a similar role in combinatorial optimization problems with an underlying symmetry of type $D$. Similarly, it would be very interesting to find applications for the exceptional $\Phi$-submodular functions, for instance, to problems with an underlying symmetry of type $E_6, E_7$, or $E_8$.

\item
In type $A$, 
every generalized permutohedron in $\RR^d$ is a signed Minkowski sum of the simplices $\Delta_S = \conv(e_s \, : \, s \in S)$ for $\emptyset \subsetneq S \subseteq [d]$. 
 Geometrically, this means that there is a nice choice of $(2^d-1)$ rays of the $(2^d-1)$-dimensional submodular cone -- namely, the rays corresponding to the polytopes $\Delta_S$ -- which also forms  a basis for $\RR^{2^d-1}$. Remarkably, since one may compute the mixed volumes of these polytopes $P_S$, one obtains combinatorial formulas for the volume of any generalized permutohedron.  For details, see \cite{matroidvolumes, Postnikov}.
 
Is there a similarly nice choice of $|\R_\Phi|$ rays of the $\Phi$-submodular cone that generate all others? Can one compute their mixed volumes? If so, one would obtain a formula for the volume of an arbitrary generalized $\Phi$-permutohedron. In type $A$, the $2^d-1$ non-empty faces of the simplex $P_{A_{d-1}}(\myomega_1)$ suffice, as explained above. 
Unfortunately (but still interestingly), the $3^d-1$ non-empty faces of the cross-polytope $P_{B_d}(\myomega_1)$, which are rays of Def$(B_d)$, only span a subspace of dimension $\frac12(3^d-(-1)^d)$ of $\RR^{3^d-1}$ \cite{doker}.  Can one do better, either in type $B$ or in general? For some related work on the mixed volumes of the fundamental weight polytopes, see \cite{Babson, Croitoru, Liu, Postnikov}.

\item
The framework presented here makes it very natural to define the \emph{rank function} of a Coxeter matroid $M$ of type $\Phi$ to be the support function $h_{Q(M)}:\RR \rightarrow R$ of its Coxeter matroid polytope. It would be interesting and useful to give a characterization of these rank functions.

\item
Is there a good characterization of the indecomposable Coxeter matroids?
This has been done beautifully in type $A$ \cite{extreme, econ}:
a matroid polytope $Q(M)$ is indecomposable if and only if, upon deleting all loops and coloops, the matroid $M$ is connected. Equivalently, for a rank $r$ matroid $M$ on $[d]$, the matroid polytope $Q(M)$ is indecomposable if and only if it is a full-dimensional subset of the hypersimplex $\Delta(d, r)$.

The analogous statement does not hold for Coxeter matroids, even when one accounts for the fact that, unlike in type $A$, some fundamental weight polytopes can be decomposable. 
For example, consider the polytope highlighted below; it is a full-dimensional subset of the icosahedron -- an indecomposable fundamental weight polytope of type $H_3$. However, it is decomposable, since one can deform it by shortening the four middle edges until the two short ones disappear. 

\begin{figure}[H]
\centering
\includegraphics[height=45mm]{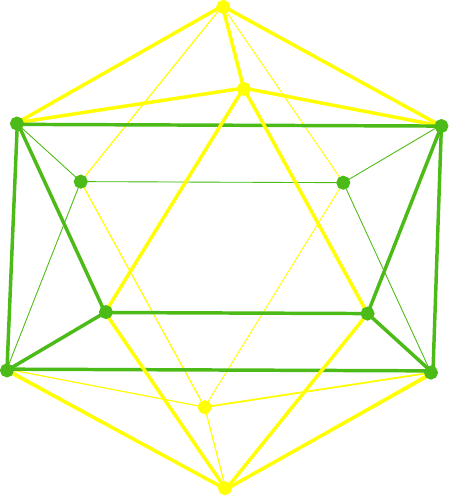}
\end{figure}

\end{enumerate}

\section{\textsf{Acknowledgments.}} We thank David Arcila, Jeff Doker, Andr\'es Rodr\'{\i}guez Rey, and Milan Studen\`y for valuable conversations about this project over the years. We are also grateful to the referees for their careful reading of the paper and their valuable suggestions; in particular, for pointing us to Theorem \ref{thm:weightfaces}.

Part of this work was done while FA, FC, and AP were visiting the Mathematical Sciences Research Institute in Berkeley in Fall 2017 and while FA was visiting the Simons Institute for the Theory of Computing in Spring 2019. We thank them for offering a wonderful setting to do mathematics, and the NSF and Simons Foundation for their financial support of these visits.

\footnotesize

\bibliography{biblio}

\bibliographystyle{plain}

\end{document}